\numberwithin{equation}{section}
\def\epsilon{\varepsilon}
\DeclareMathOperator{\var}{var}
\newtheorem{theorem}{Theorem}[section]
\newtheorem{lemma}{Lemma}[section]
\newtheorem*{proposition*}{Proposition}
\newtheorem{corollary}{Corollary}[section]
\newtheorem*{corollary*}{Corollary}
\newtheorem*{definitions*}{Definitions}
\newtheorem*{conjecture*}{\bf Conjecture}
\newtheorem*{example*}{\bf Example}
\theoremstyle{remark}
\newtheorem{remark}{\bf Remark}[section]
\newtheorem{assumption}{Assumption}[section]
\newcounter{parentalgorithm}
\newenvironment{subalgorithms}{%
  \refstepcounter{algorithm}%
  \setcounter{parentalgorithm}{\value{algorithm}}%
  \setcounter{algorithm}{0}%
  \ignorespaces
}{%
  \setcounter{algorithm}{\value{parentalgorithm}}%
  \ignorespacesafterend
}
\numberwithin{equation}{section}
\begin{document}

\title{Random Batch Methods (RBM) for interacting particle systems}

\author[1]{Shi Jin \thanks{shijin-m@sjtu.edu.cn}}
\author[2]{Lei Li\thanks{leili2010@sjtu.edu.cn}}
\author[3]{Jian-Guo Liu\thanks{jliu@phy.duke.edu}}
\affil[1,2]{School of Mathematical Sciences, Institute of Natural Sciences, MOE-LSC, Shanghai Jiao Tong University, Shanghai, 200240, P. R. China.}
\affil[3]{Department of Mathematics and Department of Physics, Duke University, Durham, NC 27708, USA.}

\date{}
\maketitle

\begin{abstract}
We develop Random Batch Methods for interacting particle systems with large number of particles. These methods use small but random batches for particle interactions, thus the computational cost is reduced from $O(N^2)$ per time step to $O(N)$, for a system with $N$ particles with binary interactions.  On one hand, these methods are efficient Asymptotic-Preserving schemes for the underlying particle systems, allowing $N$-independent time steps and also capture, in the $N \to \infty$ limit, the solution of the mean field limit which are nonlinear Fokker-Planck equations; on the other hand, the stochastic processes generated by the algorithms can also be regarded as new models for the underlying problems. For one of the methods, we give a particle number independent error estimate under some special interactions. Then, we apply these methods to some representative problems in mathematics, physics, social and data sciences, including the Dyson Brownian motion from random matrix theory, Thomson's problem, distribution of wealth, opinion dynamics and clustering. Numerical results show that the methods can capture both the transient solutions and  the global equilibrium in these problems.
\end{abstract}

\section{Introduction}

In natural and social sciences, there are many collective behaviors resulted from a huge number of interrelated individuals. Examples include swarming or synchronization described by the Vicsek model \cite{vicsek1995novel}, flocking in school of fishes \cite{degond2017coagulation}, groups of birds \cite{cucker2007emergent}, chemotaxis of bacteria \cite{horstmann03}, consensus clusters in opinion dynamics \cite{motsch2014}, to name a few. 

We are interested to develop random algorithms for these interacting particle systems to reduce computational cost. 
While the ideas also work for second order systems like Langevin dynamics \cite{schlick2010molecular} and the model in \cite{cucker2007emergent},  let us firstly focus on the following first order systems:
\begin{gather}\label{eq:interactingps0}
d X^i=-\nabla V(X^i)\,dt+\sum_{k: k\le J}\alpha_{N,k}\sum_{\mathscr{C}\subset\{1,\ldots, N\}: i\in \mathscr{C}, |\mathscr{C}|=k} K^i(\mathscr{C})\,dt+\sigma dB^i,~i=1,\ldots, N,
\end{gather}
where $J$ is independent of $N$ and $\alpha_{N,k}$ are some constants. In other words, the interaction acting on each particle is the superposition of the ones for which the number of particles involved is independent of $N$. Here, $X^i\in\mathbb{R}^d$ are the labels for particles, $-\nabla V(\cdot)$ is some given external field, $K^i(\cdot)$'s are the interaction kernels and $\{B^i\}$'s are independent standard Brownian motions.  Such systems could be the overdamped limit of Langevin equations, where $dX^i$ comes from the friction term so that $X^i$ is the location of the $i$th particle and the terms on the right hand side are forces. Of course, $X^i$ can also have other interpretations depending on applications, like the velocity \cite{cucker2007emergent}, the opinion or wealth (see section \ref{sec:app}). We will loosely call $X^i$'s the locations of particles, $-\nabla V$ the external force and $K(\cdot)$ the interacting forces for convenience in this paper, unless there are clear meanings. If $\sigma=0$, there is no diffusion effect. If $\sigma>0$, we have diffusion, and the equation is a stochastic differential equation (SDE) in It\^o's sense \cite{oksendal03}. We will mostly focus on $J=2$, so that the systems are of binary interactions (i.e. each particle interacts with others separately). System \eqref{eq:interactingps0} in the case of binary interaction can be written as
\begin{gather}\label{eq:interactingps}
d X^i=-\nabla V(X^i)\,dt+\frac{1}{N-1}\sum_{j: j\neq i} K(X^i-X^j)\,dt+\sigma dB^i,~i=1,\ldots, N.
\end{gather}
We will present our algorithms and analysis for binary interactions (system \eqref{eq:interactingps}) and consider that the initial values $X_i(0)=:X_0^i$ are drawn independently from the same distribution:
\begin{gather}
X_0^i \sim \nu,~i=1,\ldots, N.
\end{gather}
 Extensions to \eqref{eq:interactingps0} are straightforward, though more involved (see section \ref{sec:diss} and Remark \ref{rmk:extensiontoq}).
Although there are examples of many body interactions like Kac interaction in spin glasses \cite{frohlich1987some,franz2004finite} for \eqref{eq:interactingps0}, we remark that the binary interaction is much more important and ubiquitous in nature, like Coulomb's interaction between electrons and nuclei, due to the superposition principles for forces.  As another remark, we have assumed the additive noise for simplicity to illustrate our methodology. However, the discussion in this paper also applies for multiplicative noise (i.e. $\sigma$ depends on $x$). (We in fact have one such example in Section \ref{sec:wealth}.)

There may be two things we care about regarding \eqref{eq:interactingps0} and \eqref{eq:interactingps}, depending on applications. The first thing is the dynamics, for example, in the opinion dynamics, we would like to know how the consensus of opinions are developed.  Another thing is the equilibrium distribution of the $N$ particles. If $N$ is large, simulation of \eqref{eq:interactingps0} and \eqref{eq:interactingps} is very expensive since for each time step during the evolution, the computational cost is of $O(N^J)$.  One possible method for studying large and complex systems where individuals interact with each other is the mean field approximation \cite{stanley1971, georges1996, lasry2007}. In this approximation, the effect of surrounding particles is approximated by a consistent averaged force field so that one obtains a one body problem.
For example, regarding \eqref{eq:interactingps} that we will focus on, in the $N\to \infty$ regime, the distribution $\mu$ of the particles formally satisfies the following nonlinear Fokker-Planck equation (see \cite{kac1956,mckean1967,benachour1998,liuyang2016,jabinquantitative,serfaty2018mean,li2018})
\begin{gather}\label{eq:nonlinearfp}
\partial_t\mu=\nabla\cdot(\nabla V(x)\mu)-\nabla\cdot(\mu K*\mu)+\frac{1}{2}\sigma^2\Delta \mu.
\end{gather}
It is expected that $\mu$ is close to the empirical measure for \eqref{eq:interactingps}
\begin{gather}\label{eq:randomempirical}
\mu_N(t):=\frac{1}{N}\sum_{i=1}^N \delta(x-X^i(t)),
\end{gather}
which is a random measure. In fact, under certain assumptions on $V$ and $K$, it can be shown that the uniform mean field limit holds (\cite{cattiaux2008}).
Both the marginal distribution of $X^1$ in \eqref{eq:interactingps}, denoted as $\mu_N^{(1)}$, and \eqref{eq:randomempirical} are close to $\mu$.

On the other hand, one often cares more about the mean field equations like \eqref{eq:nonlinearfp} and its invariant measure $\pi$, but the mean field equation and $\pi$ are hard or expensive to solve or compute. Using the mean field approximation, one can generate some artificial particle systems of the form \eqref{eq:interactingps0} or \eqref{eq:interactingps}. In this sense, the interacting particle systems \eqref{eq:interactingps0} and \eqref{eq:interactingps} are Monte-Carlo particle methods for the mean field equation and $\pi$. Clearly, when $N$ (the number of numerical particles) is large, one still wants to reduce the computational cost.
Hence, no matter whether studying system \eqref{eq:interactingps} (and \eqref{eq:interactingps0} ) is due to its own interest or due to numerical simulation of the mean field equation, it is highly desirable to design some efficient algorithms to solve the particle system \eqref{eq:interactingps}. 

Nowadays, in the era of big data, many stochastic algorithms have been developed to reduce the computational cost while keep certain accuracy.  Hence, one may borrow some ideas from these areas for the physical problems. The stochastic gradient descent (SGD) algorithm is developed to reduce the computational cost and for better exploring the high dimensional parameter space \cite{bottou1998online,bubeck2015convex} for supervised learning \cite{schmidhuber2015}. In SGD, a small batch of samples are chosen each time to form the noisy gradient. A similar algorithm is the stochastic coordinate descent where only a few coordinates are updated each time \cite{nesterov2012efficiency,wright2015coordinate}.  The idea of random batches also appears in the stochastic gradient Langevin dynamics (a Markov Chain Monte Carlo method (MCMC)), which can be applied for Bayesian inference \cite{welling2011bayesian,ma2015complete,nagapetyan2017true}. Pretty much like SGD, the method uses random batch of data to update the parameters one needs to estimate. Besides, many random algorithms also appear for matrices, such as constructing approximate matrix decompositions by randomized Singular Value Decompositions \cite{halko2011} and the fast Monte Carlo algorithm for matrix-matrix product in low rank problems \cite{drineas2006fast1,drineas2006fast2}. 

Motivated by the stochastic algorithms in machine learning and statistics, we develop Random Batch Methods (RBM) for interacting particle systems \eqref{eq:interactingps} and \eqref{eq:interactingps0}. The idea is quite simple: for a {\it small} duration of time, one randomly divides the particles into {\it small} batches and the interactions are only turned on inside each batch (see Section \ref{sec:alg} for details). Numerical experiments in later sections show that RBMs work quite well for a wide range of applications with binary interactions from physical, biological to data sciences.  These algorithms reduce the computational cost per time step from $O(N^2)$ to $O(N)$ for binary interactions \eqref{eq:interactingps} ($O(N^J)$ to $O(N)$ for \eqref{eq:interactingps0}).  They not only recover the equilibrium well in variaous problems, but also approximate {\it dynamics} of measures with very singular interaction kernels (see for example section \ref{sec:dyson}). Moreover in section \ref{sec:error}, under suitable conditions on the external potential, interaction force and batch size, we prove for one of the RBMs that the numerical error has a bound that only depends on the time step,  {\it independent} of $N$ and time. The key for the proof is Lemma \ref{lmm:averagefunc}, which guarantees that on average the random force is consistent with the full interaction.

The methodology of randomly choosing some objects has also been widely used in kinetic theory community. 
The multi-level Monte Carlo (MLMC) method \cite{haji2018multilevel} runs simulations with different number of particles for McKean-Vlasov equation using the idea of subsampling. The authors used smaller numbers of particles for repeated simulations with a subset of random variables (such as the Brownian motions) for the finest simulation to reduce the computational cost, while keeping good enough MLMC estimator. Different from \cite{haji2018multilevel}, RBMs use the idea of subsampling in computing interacting forces for one single simulation to approximate the distribution of particles.
The Direct Simulation Monte Carlo (DSMC) method proposed by Bird (\cite{bird1963approach,bird1994molecular}) uses randomly chosen simulation molecules to solve the Boltzmann equation with finite Knudsen number (see the work by Wagner \cite{wagner1992convergence} for the proof of convergence).  Moreover, Nanbu's simulation method directly derived from the Boltzmann equation has gained success and been proved to converge \cite{nanbu1980direct,babovsky1989convergence}.
In \cite{carlen2013kinetic}, biological swarm models with random pair interactions have been studied and proven to converge to Kac-Boltzmann equation. In the reverse way to \cite{carlen2013kinetic},  the work by Albi and Pareschi (\cite{albi2013}) uses stochastic binary interaction algorithms for flocking and swarming dynamics. The algorithms in \cite{albi2013} are intrinsically doing similar things as our RBMs do; their Algorithm 4.3 is particularly alike RBM-1 (see section \ref{sec:alg}) with $p=2$. See a more recent application by Carrillo et al \cite{carrillo2017particle}. In this sense, our RBMs are generalizations of symmetric Nanbu (Algorithm 4.3) in \cite{albi2013}.
However, the numerical particle system in \cite{albi2013} was motivated by the mean-field limit, in a way similar to the DSMC methods of binary collisions for the Boltzmann equation.   It was aimed at solving the mean-field equation (corresponding to the right vertical line in Fig. \ref{fig:convergencetauN}).  RBMs are motivated by the idea of mini-batch methods from machine learning. They are Monte-Carlo type approximations directly to the (physical) particle systems \eqref{eq:interactingps}. They  correspond to the left horizontal line in Fig. \ref{fig:convergencetauN}. Moreover, one of the RBMs will be proved (under special conditions) to converge, when time steps go to zero, to the particle system \eqref{eq:interactingps}.  The construction of RBMs, as well as the proof of their convergence,  are obtained  {\it without the knowledge of the mean-field limit}.  Of course when $N\to \infty$ its density measure also converges to the mean-field limit equation, as will be proven using the Wasserstein distance. Therefore, the RBMs are a class of Asymptotic-Preserving schemes for particle systems \eqref{eq:interactingps} or \eqref{eq:interactingps0}, in the sense of 
\cite{jin2010asymptotic} (see Fig. \ref{fig:convergencetauN}). Lastly, the idea of turning on interactions inside the batches can be easily extended. For example, one can change the batch size to adjust the noise levels. Moreover, RBMs can also be extended to \eqref{eq:interactingps0} involving more complex interactions.

Let us remark that RBMs can be straightforwardly applied to the second order particle systems. In Appendix \ref{sec:Hamil}, we establish similar error estimates for a Hamiltonian system, valid in finite time, and perform the numerical verification in section \ref{subsec:1dsimpletest}.

The rest of the paper is organized as follows. In section \ref{sec:alg}, we propose the Random Batch Method without replacement (RBM-1) and with replacement (RBM-r), and give some discussions. In section \ref{sec:error}, we obtain an error estimate, which only depends on time step but not on $N$ and time,  of the marginals for RBM-1 in the Wasserstein distance under some assumptions for the external and interacting forces. Though performed for binary interactions, the analysis should work for \eqref{eq:interactingps0} as well. In section \ref{sec:numverify}, we do numerical tests to verify the theoretic results in section \ref{sec:error} and use the Dyson Brownian motion to compare the RBMs.  In particular, in section \ref{sec:dyson}, we compute the law of the Dyson Brownian motions from random matrix theory where the interaction kernel is {\it singular}. Our method can capture the evolution of distribution and the equilibrium semicircle law, and the two algorithms (RBM-1 and RBM-r) give comparable results. In section \ref{sec:app}, we use the RBMs to solve some interesting application problems with binary interactions.  In section \ref{sec:thomson}, we focus on Thomson's problems where we solve the dynamics on the sphere.  In section \ref{sec:social}, we design stochastic dynamics for evolution of wealth and opinions. Lastly, we apply our randomized algorithms for efficient clusters in section \ref{sec:cluster}. The paper is concluded in section \ref{sec:conclusion}.

\section{The Random Batch Methods}\label{sec:alg}

In this section, we propose the RBMs by using random batches for the summation of the interacting force in \eqref{eq:interactingps}. The extensions to \eqref{eq:interactingps0} should be similar but are more involved, and we will give some discussion about this in section \ref{sec:diss}.
For the setup, we pick a short duration of time $\tau$ and consider the discrete time
\begin{gather}
t_m=m\tau.
\end{gather}
Suppose we compute up to time $T$ and the number of iteration for the stochastic algorithm is
\begin{gather}\label{eq:NT}
N_T=\left\lceil \frac{T}{\tau} \right\rceil.
\end{gather}
Clearly to simulate the ODE system \eqref{eq:interactingps} directly, the complexity is 
$O(N_T N^2)$. If $N$ is large, this is expensive. Motivated by the stochastic algorithms in the machine learning community, we will use a randomized strategy.

\subsection{The first algorithm (RBM-1)}\label{sec:firstrandom}

A natural idea is that at each time $t_m$, we divide the $N=np$ particles into $n$ small batches with size $p$ ($p\ll N$, often $p=2$) randomly, denoted by $\mathcal{C}_q, q=1,\ldots, n$, and then interact particles within each batch (For convenience, we have assumed that $p$ divides $N$. In general, the last batch does not have to have size $p$.)
The algorithm is called RBM-1 (shown in \ref{randomdiv}). Each iteration contains two main steps: (1) Randomly shuffling and dividing the particles into $n$ batches; (2) evolving with interactions only turned on inside batches.

\begin{algorithm}[H]
\caption{(RBM-1)}\label{randomdiv}
\begin{algorithmic}[1]
\For {$m \text{ in } 1: [T/\tau]$}   
\State Divide $\{1, 2, \ldots, pn\}$ into $n$ batches randomly.
     \For {each batch  $\mathcal{C}_q$} 
     \State Update $X^i$'s ($i\in \mathcal{C}_q$) by solving the following SDE with $t\in [t_{m-1}, t_m)$.
     \begin{gather}\label{eq:firstalgorithm}
            dX^i=-\nabla V(X^i) dt+\frac{1}{p-1}\sum_{j\in \mathcal{C}_q,j\neq i}K(X^i-X^j)dt+\sigma dB^i.
      \end{gather}
      \EndFor
\EndFor
\end{algorithmic}
\end{algorithm}

Clearly, the update equation \eqref{eq:firstalgorithm} can be rewritten as
\begin{gather}
dX^i=-\nabla V(X^i) dt+\frac{1}{N-1}\sum_{j: j\neq i}K(X^j-X^i)dt+\sigma dB^i
+\chi_{m,i}(X(t))\,dt,
\end{gather}
where
\begin{gather}
\chi_{m,i}(X(t))=\frac{1}{p-1}\sum_{j\in \mathcal{C}_q,j\neq i}K(X^i-X^j)-\frac{1}{N-1}\sum_{j: j\neq i}K(X^i-X^j).
\end{gather}

For a given $x=(x^1,\ldots,x^N)\in\mathbb{R}^{Nd}$ that is independent of the random division, we have (see Lemma \ref{lmm:averagefunc} for the proof)
\begin{gather}
\mathbb{E}\chi_{m,i}(x)=0.
\end{gather}
This is a key observation which eventually leads to the convergence of the algorithms in expectation. 
As a remark, the position $X=(X^1,\ldots, X^N)$ itself depends on the random division. Hence, in general,
\[
\mathbb{E}\chi_{m,i}(X(t))\neq 0.
\]

Regarding the complexity, note that random division into $n$ batches of equal size can be implemented using random permutation. The latter can be realized in $O(N)$ operations by Durstenfeld's modern revision of Fisher-Yates shuffle algorithm \cite{durstenfeld1964}. (In MATLAB, one can use 'randperm(N)' to generate a random permutation. Then, the first $p$ elements are considered to be in the first batch, the second $p$ elements are in the second batch, etc). 

If one is to simulate up to time $T$, the total number of time steps is $N_T$ as in \eqref{eq:NT}. 
The cost for each batch per iteration is $\lesssim Cp^2$, where $C$ is the unit time independent of $N,p,\tau$. Since there are $n=N/p$ batches, so the total cost per iteration is like $\lesssim CpN$. The total complexity is therefore
\begin{gather*}
O(pN_TN)=O\left(\frac{p}{\tau}NT\right).
\end{gather*} 
The ration $p/\tau$ affects the total complexity. If $p=2$ which we will use for the numerical simulations due to the simplicity in implementations, the complexity is $O(N_TN)=O(TN/\tau)$. The cost is reduced significantly from the naive simulation.

In section \ref{sec:error}, one will see that the RBM converges due to some averaging effect in time, and 
the rate follows in a similar way as Law of Large Number results. The error bound is like $C\sqrt{\tau/p}$. This means the ratio $\tau/p$ controls the error theoretically if the ODEs/SDEs can be solved accurately. Hence, if one wants the error to be smaller than some given accuracy $\epsilon$, the computational cost is like $O(NT/\epsilon^2)$.
The dependence in $N$ is linear! The power for $\epsilon$ is $2$, which is typical for Monte Carlo methods. To improve this, one may consider some variance reduction techniques (see for example \cite{fang2018spider}).

RBM-1 is in spirit similar to the stochastic gradient descent in machine learning (\cite{bottou1998online,bubeck2015convex}).
Recently, there are some analysis of SGD in the mathematical viewpoints and applications to physical problems \cite{litaie2017,feng2017,hulililiu2018,chen2018online}.
RBM-1 can be used both for simulating the evolution of the measure \eqref{eq:randomempirical} or \eqref{eq:nonlinearfp} and for sampling from the equilibrium state $\pi$.

\subsection{The Random Batch Method with replacement}\label{sec:secondrandom}
RBM-1 requires the random division, and the elements in different batches are different.
This is in fact the sampling without replacement. If one allows replacement, we have RBM-r$'$.

\begin{subalgorithms}
\begin{algorithm}[H]
\caption{(RBM-r$'$)}\label{randomreplacement}
\begin{algorithmic}[1]
\For {$m \text{ in } 1: [T/\tau]$}   
     \For {$k$ from $1$ to $N/p$}
     \State Pick a set $\mathcal{C}_k$ of size $p$ randomly with replacement. 
     \State Update $X^i$'s ($i\in \mathcal{C}_k$) by solving the following SDE for time $\tau$.
     \begin{gather}\label{eq:algorithmreplacement}
     \left\{
           \begin{split}
            & dY^i =-\nabla V(Y^i) dt+\frac{1}{p-1}\sum_{j\in \mathcal{C}_k,j\neq i}K(Y^i-Y^j)dt+\sigma dB^i,\\
            & Y^i(0) =X^i,
            \end{split}
            \right.
      \end{gather}
 \quad\quad\quad i.e., solve \eqref{eq:algorithmreplacement} with initial values $Y^i(0)=X^i$, and set $X^i\leftarrow Y^i(\tau)$.
      \EndFor
\EndFor
\end{algorithmic}
\end{algorithm}

RBM-r$'$ can be reformulated as Random Batch Method with replacement (RBM-r) that has some flavor of  the stochastic coordinate descent method \cite{nesterov2012efficiency,wright2015coordinate}. Here, the pseudo-time $s$ is introduced for convenience and $s_m=m\tau$. Roughly, $t_m$ corresponds to $s_{mN/p}$.

\begin{algorithm}[H]
\caption{(RBM-r)}\label{fullyrandom}
\begin{algorithmic}[1]
\For {$m \text{ in } 1: [T/\tau]*(N/p)$}   
\State Pick a set $\mathcal{C}$ of size $p$ randomly.
     \State Update $X^i$'s ($i\in \mathcal{C}$) by solving the following with pseudo-time $s\in [s_{m-1}, s_m)$.
     \begin{gather}\label{eq:secondalgB}
            dX^i=-\nabla V(X^i)\, ds+\frac{1}{p-1}\sum_{j\in\mathcal{C},j\neq i}K(X^i-X^j)\,ds+\sigma\, dB^i.
      \end{gather}
 \EndFor
\end{algorithmic}
\end{algorithm}
\end{subalgorithms}

For such type of methods, we expect to have 
\begin{gather}
\tilde{N}_T=\frac{1}{p}N N_T
\end{gather}
iterations to get comparable behaviors. However, each step is very cheap: one only needs $O(p^2)$ work. Hence, we still expect the complexity to be $O(pNN_T)$.

Though the same as RBM-r$'$, 
RBM-r does not have explicit concept of time, since the positions of the particles are not changing simultaneously. Intuitively,  RBM-r can only give sampling for the invariant measure for the nonlinear Fokker-Planck equation \eqref{eq:interactingps} while unable to simulate the dynamics of the distribution.  However, as RBM-r$'$ indicates,  $N/p$ iterations correspond to time $\tau$ and thus a single step in RBM-1, so it might still approximate the evolution of distributions. In fact, the example later confirms this and $m(N/p)$ iterations indeed give acceptable approximation for the distribution at time $m\tau$.
As a last comment, we may sometimes choose random time step; for example $\tau\sim Exp(\Delta t)$ (the exponential distribution with parameter $\Delta t$) such that $\mathbb{E}\tau=\Delta t$. Intuitively, this may help to increase the noise level and avoid being trapped in some local minimizers of $V$.

\subsection{Some discussions about the algorithms}\label{sec:diss}

In this subsection we make some discussion about our algorithms and complementary remarks. 
\begin{enumerate}
\item For system \eqref{eq:interactingps0}, the RBMs can be similarly developed. The batch size should be larger than or equal to $J$ ($p\ge J$). After the random batch is chosen, the interactions only happen inside the randomly chosen batch. One should also adjust the magnitude of the interaction such that an analogy of Lemma \ref{lmm:averagefunc} holds. In other words, the expectation of the random forces should equal to those in \eqref{eq:interactingps0}. The complexity clearly is reduced from $O(N^J)$ to $O(N)$.

\item The sizes of batch do not have to be equal. One can even choose the sizes randomly.
One can also adjust the batch size to adjust the noise level. 

\item After the batches are obtained, one can do parallel computing to update the positions of $X$ for RBM-1. However, one has to do regrouping after one time interval which stops the parallelism. There is no big efficiency improvement unless solving the SDEs is expensive. Regarding complexity,  there is no big difference in complexity. For RBM-r, to get $p$ indices, one needs to generate $p$ random numbers from $[0, 1]$, and the total complexity is $O(N)$ for $N/p$ iterations. For RBM-1,  as in Durstenfld's implementation of random permutation, one needs to generate $N$ random numbers from $[0, 1]$ while do some swapping operations for each element. (This may be different from the intuition established from drawing balls from a bag. For randomly dividing $N$  balls, one can choose $p$ from $N$ balls first, then $p$ from $N-p$, $\ldots$. It seems that the complexity becomes smaller and smaller as the total number of balls is becoming smaller and smaller. This is indeed not the case because in computer one can use arrays to store the 'balls' and one only needs to generate the indices without touching these 'balls'.) 

In summary, there is no big difference in complexity for the two algorithms. The advantage of RBM-1 might be its ability for parallelism during one interval while the advantage of RBM-r is its simplicity so that it is likely more flexible for extensions. 

\item In Lemma \ref{lmm:averagefunc}, for $x$ independent of the random division, we prove that
\[
\mathrm{Var}(\chi_{m,i}(x))=\left(\frac{1}{p-1}-\frac{1}{N-1}\right)\Lambda_i(x),
\]
where $\Lambda_i(x)$ is independent of $p$.
This means for larger $p$, the variance is smaller and the noise level is lower. This noise somehow reflects the fluctuation of the empirical measure $\tilde{\mu}_N^{(1)}$ around $\mu_N^{(1)}$.

\item If $K$ is a singular forcing (like Coulomb), we can do {\it splitting} method and have
\begin{gather}
\begin{split}
&dX^i= \frac{1}{p-1}\sum_{j\in \mathcal{C},j\neq i} K(X^i-X^j)\,dt,\\
&dX^i=-\nabla V(X^i)\,dt+\sigma dB^i.
\end{split}
\end{gather}
If $p=2$, the singular forcing term can be often solved {\it analytically}. This is another advantage of the stochastic algorithm: for the $N$-particle system, if the forcing $K$ is singular so that the problem is stiff, explicit scheme needs very small time step while implicit scheme is hard to invert. Using the stochastic algorithm plus time splitting, the evolution can be solved {\it exactly}, thus avoiding stability constraint.

\item Suppose one aims at the law of particles in \eqref{eq:interactingps}
with very large $N$. By mean field limit, one can choose to
solve \eqref{eq:nonlinearfp} directly. A possible way is to use particle method for \eqref{eq:nonlinearfp}  where the number of particles is much smaller than $N$ and the masses for the particles in the numerical method can even be different (the particle blob method \cite{liu2017random}).  For the interacting particle system of this numerical purpose, our randomized algorithms also apply well. In this regard, we provided an efficient numerical particle method for the mean-field (or kinetic) equations. In fact this is exactly the starting point of the binary algorithms in \cite{albi2013}.

\item 
To better simulate the invariant measure in the case of $\sigma=0$, one may add the Brownian motion $\epsilon_N dB^i$ where $\epsilon_N$ decreases with $N$. For example, we may set 
\[
\epsilon_N=\frac{1}{N^{\gamma}},~\gamma>0.
\]

\item 
If the initial distribution is far from the equilibrium and we aim to get the global equilibrium, one may need many iterations for convergence to the equilibrium. A possible way is to use the Gibbs distribution corresponding to $V$ 
\[
\nu(dx)\propto \exp(-V(x))\,dx
\]
 for initialization to reduce the number of iterations. This distribution can be sampled using the Markov chain Monte Carlo (MCMC) methods \cite{hastings1970monte,gilks1995markov}. Of course, for special $V$ such as quadratic functions, the initial distribution can be sampled directly without MCMC.
 
 \item Compared with traditional treecode and fast multipole method \cite{rokhlin1985rapid} where the interaction kernels are required to decay, the RBMs work also for kernels that do not necessarily decay to reduce the computational cost to $O(N)$. They can also be applied for kernels of the general form $K(x,y)$ which may not be symmetric and may not have translational invariance.
\end{enumerate}

\section{An error analysis for RBM-1}\label{sec:error}

In this section, we study RBM-1 proposed in Section \ref{sec:firstrandom}. In particular, we will check how close it is to the fully coupled system \eqref{eq:interactingps}. We leave the study of RBM-r to the future. However, as commented above, we expect RBM-r to also work when RBM-1 works. Recall that each iteraction consists of random division and evolution. The mechanism that makes RBMs work is Lemma \ref{lmm:averagefunc} and small step size. The philosophy is as following. Suppose that the system has certain chaotic property. When $\tau$ is small enough, the accumulative behavior along many time steps will be roughly comparable to the average behavior, which is \eqref{eq:interactingps} thanks to Lemma \ref{lmm:averagefunc}. This is similar to Law of Large Numbers, but on time.

We assume the following conditions on the confining and interacting potentials:
\begin{assumption}\label{ass:basicassumption}
Suppose $V$ is strongly convex on $\mathbb{R}^d$ so that $x\mapsto V(x)-\frac{r}{2}|x|^2$ is convex, and $\nabla V$, $\nabla^2V$ have polynomial growth (i.e. $|\nabla V(x)|+|\nabla^2 V(x)|\le C(1+|x|^q)$ for some $q>0$).
Assume $K(\cdot)$ is bounded, Lipschitz on $\mathbb{R}^d$ with Lipschitz constant $L$ and has bounded second order derivatives. Moreover,
\begin{gather}
r>2L.
\end{gather}
\end{assumption}

The condition $r>2L$ is to ensure that the evolution group for the deterministic part of \eqref{eq:interactingps} is a contraction. In particular, if 
\[
\dot{X}^i=-\nabla V(X^i)+\frac{1}{N-1}\sum_{j: j\neq i}K(X^i-X^j),~~\dot{Y}^i=-\nabla V(Y^i)+\frac{1}{N-1}\sum_{j: j\neq i}K(Y^i-Y^j),
\]
 then
\begin{gather}
\frac{d}{dt}\sum_{i=1}^N|X^i-Y^i|^2\le -(r-2L) \sum_{i=1}^N|X^i-Y^i|^2.
\end{gather}
For such systems, one needs $\sigma>0$ for a nontrivial equilibrium; otherwise, all the particles will go to a single point.

Our goal is to prove that the distribution generated by RBM-1 is close to the 
distribution of the marginal distribution of the $N$ particle system in the Wasserstein distance. We recall that the Wasserstein-$2$ distance is given by \cite{santambrogio2015}
\begin{gather}\label{eq:W2}
W_2(\mu, \nu)=\left(\inf_{\gamma \in \Pi(\mu,\nu)}\int_{\mathbb{R}^d\times\mathbb{R}^d}|x-y|^2 d\gamma\right)^{1/2},
\end{gather}
where $\Pi(\mu,\nu)$ means all the joint distributions whose marginal distributions are $\mu$ and $\nu$ respectively.

In order to achieve this goal, we consider the synchronous coupling between \eqref{eq:interactingps} and \eqref{eq:firstalgorithm}. 
We denote $X^i$ the solution obtained by the $N$ interacting particle system while $\tilde{X}^i$ the solution obtained by RBM-1. Correspondingly, $\tilde{B}^i$ will denote the Brownian motion used in RBM-1. Note that both \eqref{eq:interactingps} and \eqref{eq:firstalgorithm} have exchangeability. This means, for example, the joint distribution of $X^i$'s is symmetric. Consequently, $\tilde{X}^i$ and $\tilde{X}^j$ (for all $i$ and $j$) have the same distribution.  We construct the coupling as follows. For $i=1,2,\ldots, N$,
\begin{gather}\label{eq:coupling}
\begin{split}
& \tilde{X}^i(0)=X^i(0)= X_0^i \sim \nu,\\
& B^i(t)=\tilde{B}^i(t).
\end{split}
\end{gather}
With this coupling, we define the error process
\begin{gather}
Z^i(t):=\tilde{X}^i(t)-X^i(t).
\end{gather}

Let $\xi_{m-1}$ denote the random division of batches at $t_{m-1}$, and  define the filtration $\{\mathcal{F}_{m}\}_{m\ge 0}$ by
\begin{gather}
\mathcal{F}_{m-1}=\sigma(X_0^i, B^i(t), \xi_j; t\le t_{m-1}, j\le m-1).
\end{gather}
In other words, $\mathcal{F}_{m-1}$ is the $\sigma$-algebra generated by the initial values $X_0^i$ ($i=1,\ldots, N$), $B^i(t)$, $t\le t_{m-1}$, and $\xi_j$, $j\le m-1$. Hence, $\mathcal{F}_{m-1}$ contains the information of how batches are constructed for $t\in [t_{m-1}, t_m)$. We also introduce the filtration $\{\mathcal{G}_m\}_{m\ge 0}$ by
\begin{gather}
\mathcal{G}_{m-1}=\sigma(X_0^i, B^i(t), \xi_j; t\le t_{m-1}, j\le m-2).
\end{gather}
If we use $\sigma(\xi_{m-1})$ to mean the $\sigma$-algebra generated by $\xi_{m-1}$, the random division of batches at $t_{m-1}$, then $\mathcal{F}_{m-1}=\sigma(\mathcal{G}_{m-1}\cup \sigma(\xi_{m-1}))$. Throughout this section, $C$ will denote generic constants whose concrete value can change from line to line. We use $\|\cdot\|$ to represent the $L^2(\Omega)$ norm ($\Omega$ means the sample space):
\begin{gather}
\|v\|=\sqrt{\mathbb{E}|v|^2}.
\end{gather}

We now state the theorem.
\begin{theorem}\label{thm:mainresult}
Suppose Assumption \ref{ass:basicassumption} holds. With the coupling constructed above,
\begin{gather}
\sup_{t\ge 0}\|Z^1(t)\|\le C\sqrt{\frac{\tau}{p-1}+\tau^2},
\end{gather}
where $C$ is independent of $N, p$ and $t$. Consequently, let $\mu_N^{(1)}(t)$ be the first marginal for \eqref{eq:interactingps} and $\tilde{\mu}^{(1)}_N$ be the first marginal for system \eqref{eq:firstalgorithm}. Then
\begin{gather}
\sup_{t\ge 0}W_2(\mu^{(1)}_N(t), \tilde{\mu}^{(1)}_N(t))\le C\sqrt{\frac{\tau}{p-1}+\tau^2}\le  C\sqrt{\tau}.
\end{gather}
\end{theorem}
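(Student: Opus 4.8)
The plan is to run a synchronous-coupling energy estimate on the error process and reduce everything to controlling a single cross term against the consistency error $\chi_{m,i}$. Subtracting \eqref{eq:interactingps} from \eqref{eq:firstalgorithm} and using the coupling \eqref{eq:coupling}, the shared Brownian motions cancel, so $Z^i=\tilde X^i-X^i$ solves a pathwise ODE (no stochastic integral):
\[
\dot Z^i = -\bigl(\nabla V(\tilde X^i)-\nabla V(X^i)\bigr)+\frac{1}{N-1}\sum_{j\neq i}\bigl(K(\tilde X^i-\tilde X^j)-K(X^i-X^j)\bigr)+\chi_{m,i}(\tilde X(t)).
\]
First I would record uniform-in-$(t,N)$ moment bounds $\sup_t\mathbb{E}|X^i|^{2k}+\sup_t\mathbb{E}|\tilde X^i|^{2k}<\infty$, which follow from the strong convexity of $V$ (a confining Lyapunov estimate) together with the boundedness of $K$; these are needed to tame the polynomial-growth terms coming from $\nabla V$. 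Writing $w(t)=\mathbb{E}\sum_i|Z^i|^2$ and differentiating, the monotonicity of $\nabla V$ and the Lipschitz bound on $K$ produce exactly the contraction in Assumption \ref{ass:basicassumption}, so that
\[
\tfrac12 w'(t)\le -\tfrac{r-2L}{2}\,w(t)+\mathbb{E}\sum_i Z^i(t)\cdot\chi_{m,i}(\tilde X(t)).
\]
By exchangeability $w(t)=N\|Z^1(t)\|^2$, so it suffices to bound the last cross term by $\tfrac{r-2L}{4}w(t)+CN\bigl(\tfrac{\tau}{p-1}+\tau^2\bigr)$.

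The main obstacle is that $\chi_{m,i}$ has mean zero only when evaluated at a point independent of the batch division $\xi_{m-1}$, whereas $\tilde X(t)$ depends on $\xi_{m-1}$. I would resolve this with the nested filtration $\mathcal{G}_{m-1}\subset\mathcal{F}_{m-1}$: for $t\in[t_{m-1},t_m)$, decompose
\[
\chi_{m,i}(\tilde X(t))=\chi_{m,i}(\tilde X(t_{m-1}))+\bigl(\chi_{m,i}(\tilde X(t))-\chi_{m,i}(\tilde X(t_{m-1}))\bigr),
\]
and likewise split $Z^i(t)=Z^i(t_{m-1})+(Z^i(t)-Z^i(t_{m-1}))$. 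The leading piece $\mathbb{E}\sum_i Z^i(t_{m-1})\cdot\chi_{m,i}(\tilde X(t_{m-1}))$ vanishes exactly: $\tilde X(t_{m-1})$ and $Z^i(t_{m-1})$ are $\mathcal{G}_{m-1}$-measurable, hence independent of $\xi_{m-1}$, so Lemma \ref{lmm:averagefunc} gives $\mathbb{E}[\chi_{m,i}(\tilde X(t_{m-1}))\mid\mathcal{G}_{m-1}]=0$. Everything else is a genuine remainder.

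It then remains to bound the remainder products, and here the orders are decisive. Since $Z^i$ carries no Brownian term, the increment $Z^i(t)-Z^i(t_{m-1})=\int_{t_{m-1}}^t\dot Z^i\,ds$ is of size $O(\tau)$ in $L^2$, its integrand being controlled by $|Z|$, the moment bounds, and $\|\chi_{m,i}\|$. Lemma \ref{lmm:averagefunc} supplies the crucial smallness $\|\chi_{m,i}(\tilde X(t_{m-1}))\|^2=(\tfrac{1}{p-1}-\tfrac{1}{N-1})\,\mathbb{E}\,\Lambda_i(\tilde X(t_{m-1}))\le \tfrac{C}{p-1}$, and the same variance computation applies to $\nabla\chi_{m,i}$. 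Pairing the $Z$-increment with $\chi_{m,i}(\tilde X(t_{m-1}))$ produces the $\int\chi_{m,i}(s)\cdot\chi_{m,i}(t_{m-1})\,ds$ contribution of order $\tau\sum_i\mathbb{E}|\chi_{m,i}|^2\sim N\tfrac{\tau}{p-1}$, together with $\tau\,\|Z\|\,\|\chi\|$ terms that Young's inequality splits into an absorbable $\delta\,w(t)$ and an $O(N\tau^2/(p-1))$ remainder. For the time-increment of $\chi$ I would apply It\^o's formula to $\chi_{m,i}(\tilde X(\cdot))$, writing it as a martingale part $\int\nabla\chi_{m,i}\cdot\sigma\,dB$ plus an $O(\tau)$ drift/It\^o-correction: the martingale part is conditionally mean zero given $\mathcal{F}_{m-1}$ and hence is killed against $Z^i(t_{m-1})$, while against the $Z$-increment and in the drift part one gets $O(\tau^2)$ and higher-order terms (the $\sqrt{1/(p-1)}$ smallness of $\nabla\chi$ keeping them below $\tfrac{\tau}{p-1}+\tau^2$).

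Finally, $w'(t)\le-\tfrac{r-2L}{2}w(t)+CN\bigl(\tfrac{\tau}{p-1}+\tau^2\bigr)$ with $w(0)=0$; Gronwall gives $\sup_t w(t)\le\tfrac{2C}{r-2L}N\bigl(\tfrac{\tau}{p-1}+\tau^2\bigr)$, and dividing by $N$ and taking square roots yields $\sup_t\|Z^1(t)\|\le C\sqrt{\tfrac{\tau}{p-1}+\tau^2}$ with $C$ independent of $N,p,t$. For the Wasserstein bound, the synchronous coupling makes the law of $(\tilde X^1(t),X^1(t))$ an admissible element of $\Pi(\tilde\mu_N^{(1)}(t),\mu_N^{(1)}(t))$, so by definition \eqref{eq:W2} $W_2(\mu_N^{(1)},\tilde\mu_N^{(1)})\le\|Z^1\|$, and since $p\ge2$ and $\tau$ may be taken $\le1$ the bound simplifies to $C\sqrt\tau$. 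I expect the decisive difficulty to be precisely the treatment of the cross term: one must exploit both the conditional mean-zero property of $\chi_{m,i}$ across the correct sub-$\sigma$-algebra and its $O(1/(p-1))$ variance smallness, and simultaneously verify that the It\^o increments only ever enter at order $\tau^2$ and $\tau/(p-1)$, never at the dangerous order $\tau$.
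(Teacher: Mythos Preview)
Your proposal is correct and follows the same overall strategy as the paper: synchronous coupling, the energy identity for $u(t)=\|Z^1(t)\|^2$, the contraction from $r>2L$, and the four-term splitting of the cross term $\mathbb{E}Z^1(t)\cdot\chi_{m,1}(\tilde X(t))$ via the $\mathcal{G}_{m-1}$-measurable freezing at $t_{m-1}$, with $I_1=0$ from Lemma~\ref{lmm:averagefunc}.

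There is one technical difference worth noting. In your splitting of the last two pieces you pair the $Z$-increment against $\chi_{m,1}(\tilde X(t_{m-1}))$ and against the time-increment $\chi_{m,1}(\tilde X(t))-\chi_{m,1}(\tilde X(t_{m-1}))$. The paper instead inserts $\chi_{m,1}(X(t))$, i.e.\ the \emph{true} process, as the intermediary: since $X(t)$ is independent of the batch division $\xi_{m-1}$ for \emph{all} $t$ (not just at $t_{m-1}$), Lemma~\ref{lmm:averagefunc} applies directly to $\mathbb{E}[\chi_{m,1}(X(s))\cdot\chi_{m,1}(X(t))]\le \|\Lambda_1\|_\infty/(p-1)$ throughout the interval, and the companion term $I_4$ involves $\chi_{m,1}(\tilde X(t))-\chi_{m,1}(X(t))$, which is controlled by $\|Z^1\|$ rather than by a $\sqrt\tau$ Brownian increment. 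Your route works too, but it forces you to invoke the additional fact that $\nabla\chi_{m,1}$ (evaluated at a batch-independent point) also has variance $O(1/(p-1))$, so that the martingale part of $\chi_{m,1}(\tilde X(t))-\chi_{m,1}(\tilde X(t_{m-1}))$ has $L^2$ norm $O(\sqrt{\tau/(p-1)}+\tau)$ and the resulting $\tau^{3/2}/\sqrt{p-1}$ cross term can be absorbed by AM--GM into $\tau/(p-1)+\tau^2$. The paper's choice avoids this extra step; otherwise the two arguments are equivalent.
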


We need some preparation for the proof.  Lemma \ref{lmm:averagefunc} is a type of consistency lemma for RBMs while Lemma \ref{lmm:momcond} somehow shows the stability of RBM-1.
\begin{lemma}\label{lmm:averagefunc}
Consider $p\ge 2$ and a given fixed $x=(x^1, \ldots, x^N)\in \mathbb{R}^{Nd}$. Then, for all $i$,
\begin{gather}\label{eq:averageofzetami}
\mathbb{E}\chi_{m,i}(x)=0,
\end{gather}
where the expectation is taken with respect to the random division of batches. Moreover, the variance is given by
\begin{gather}
\var(\chi_{m,i}(x))=\mathbb{E}|\chi_{m,i}(x)|^2=\left(\frac{1}{p-1}-\frac{1}{N-1}\right)\Lambda_i(x),
\end{gather}
where
\begin{gather}
\Lambda_i(x) :=\frac{1}{N-2}
\sum_{j: j\neq i}\left| K(x^i-x^j)-\frac{1}{N-1}\sum_{k:k\neq i}K(x^i-x^k)\right|^2.
\end{gather}
\end{lemma}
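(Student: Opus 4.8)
The plan is to prove the mean-zero property by a direct computation over the random division, exploiting the symmetry of how batches are formed. Fix $i$ and the configuration $x$, and let $\mathcal{C}_q$ be the (random) batch containing particle $i$. The quantity $\chi_{m,i}(x)$ is a difference of two averages of interaction forces $K(x^i-x^j)$: one over the $p-1$ partners in the same batch, weighted by $\tfrac{1}{p-1}$, and one over all $N-1$ other particles, weighted by $\tfrac{1}{N-1}$. The key observation is that for each fixed $j\neq i$, by the exchangeability of the random partition, the probability that $j$ lands in the same batch as $i$ is exactly $\tfrac{p-1}{N-1}$. Writing $\chi_{m,i}(x)=\tfrac{1}{p-1}\sum_{j\neq i}\mathbf{1}_{\{j\in\mathcal{C}_q\}}K(x^i-x^j)-\tfrac{1}{N-1}\sum_{j\neq i}K(x^i-x^j)$ and taking expectation, the indicator contributes $\mathbb{E}\,\mathbf{1}_{\{j\in\mathcal{C}_q\}}=\tfrac{p-1}{N-1}$, so the first term becomes $\tfrac{1}{p-1}\cdot\tfrac{p-1}{N-1}\sum_{j\neq i}K(x^i-x^j)$, which cancels the second term exactly. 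This gives \eqref{eq:averageofzetami}.

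For the variance, since the mean is zero I would compute $\mathbb{E}|\chi_{m,i}(x)|^2$ directly. Introduce the centered vectors $g_j:=K(x^i-x^j)-\tfrac{1}{N-1}\sum_{k\neq i}K(x^i-x^k)$, which satisfy $\sum_{j\neq i}g_j=0$. Then $\chi_{m,i}(x)=\tfrac{1}{p-1}\sum_{j\neq i}(\mathbf{1}_{\{j\in\mathcal{C}_q\}}-\tfrac{p-1}{N-1})K(x^i-x^j)$, and because $\sum_{j\neq i}g_j=0$ one can freely replace $K(x^i-x^j)$ by $g_j$ inside this sum. Expanding the square gives a double sum over $j,k\neq i$ of $g_j\cdot g_k$ multiplied by the covariance of the indicators $\mathbf{1}_{\{j\in\mathcal{C}_q\}}$ and $\mathbf{1}_{\{k\in\mathcal{C}_q\}}$. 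The diagonal terms require $\mathbb{E}\,\mathbf{1}_{\{j\in\mathcal{C}_q\}}=\tfrac{p-1}{N-1}$, and the off-diagonal terms ($j\neq k$) require the joint probability $\mathbb{P}(j,k\in\mathcal{C}_q)=\tfrac{(p-1)(p-2)}{(N-1)(N-2)}$. Again using $\sum_j g_j=0$ (hence $\sum_{j\neq k}g_j\cdot g_k=-\sum_j|g_j|^2$) to eliminate the cross terms, the algebra collapses to the stated coefficient $\bigl(\tfrac{1}{p-1}-\tfrac{1}{N-1}\bigr)$ times $\Lambda_i(x)=\tfrac{1}{N-2}\sum_{j\neq i}|g_j|^2$.

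The main obstacle, and the only place demanding care, is computing the two combinatorial probabilities correctly and then tracking the bookkeeping when the indicator covariances are combined with the constraint $\sum_{j\neq i}g_j=0$. It is tempting to treat the indicators as independent, but they are negatively correlated since the batch has a fixed size $p$; getting $\mathbb{P}(j,k\in\mathcal{C}_q)$ right for $j\neq k$ is essential for the exact cancellation that produces the clean factor $\tfrac{1}{p-1}-\tfrac{1}{N-1}$ and the specific normalization $\tfrac{1}{N-2}$ in $\Lambda_i$. I would verify these probabilities by a direct counting argument: conditioned on $i\in\mathcal{C}_q$, the remaining $p-1$ slots of the batch are a uniformly random $(p-1)$-subset of the other $N-1$ particles, from which both $\tfrac{p-1}{N-1}$ and $\tfrac{(p-1)(p-2)}{(N-1)(N-2)}$ follow immediately.
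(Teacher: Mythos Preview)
Your proposal is correct and follows essentially the same approach as the paper: introduce indicators for membership in $i$'s batch, compute $\mathbb{P}(j\in\mathcal{C}_q)=\tfrac{p-1}{N-1}$ and $\mathbb{P}(j,k\in\mathcal{C}_q)=\tfrac{(p-1)(p-2)}{(N-1)(N-2)}$, and expand. The only cosmetic difference is that the paper obtains these probabilities by explicit partition counting and computes the variance as $\mathbb{E}|f_{m,i}|^2-(\mathbb{E}f_{m,i})^2$ before simplifying, whereas you condition on $i$'s batch being a uniform $(p-1)$-subset and center first via the $g_j$'s; the algebra is equivalent.
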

\begin{proof}
We use $I(i,j)=1$ to indicate that $i,j$ are in the same batch. 
We rewrite
\[
f_{m,i}:=\frac{1}{p-1}\sum_{j:\in \mathcal{C}_q,j\neq i}K(x^i-x^j)
=\frac{1}{p-1}\sum_{j: j\neq i} K(x^i-x^j)1_{I(i,j)=1}.
\]
We first show that
\[
\mathbb{E}1_{I(i,j)=1}=\frac{p-1}{N-1}.
\]

As is well-known, there are 
\[
M(n):=\frac{(pn)!}{(p!)^n n!}
\]
ways of dividing $pn$ distinguishable objects into $n$ batches of size $p$. 
For a given $(i, j)$ pair, to compute $\mathbb{E}1_{I(i,j)=1}$ (the probability that $i, j$ are in the same batch), we compute the number of ways to group $(i, j)$ together. We first choose $p-2$ objects from $np-2$ to group with $i,j$ and then form a batch. Then, divide the remaining into $n-1$ batches.   Hence,
the number of ways to make $i,j$ in the same batch is ${np-2 \choose p-2}M(n-1)$ and thus
\[
\mathbb{E}1_{I(i,j)=1}
=\frac{{np-2 \choose p-2}M(n-1)}{M(n)}=\frac{p-1}{N-1}.
\]
This then proves that
\[
\mathbb{E}f_{m,i}=\frac{1}{N-1}\sum_{j:j\neq i}K(x^i-x^j),
\]
and thus \eqref{eq:averageofzetami} follows.

Now, let us consider the variance. We first compute the second moment of $\xi_{m,i}$:
\begin{gather*}
\begin{split}
\mathbb{E}|f_{m,i}|^2
=&\frac{1}{(p-1)^2}\sum_{j:j\neq i}|K(x^i-x^j)|^2 \mathbb{P}(I(i,j)=1)\\
 &+\frac{1}{(p-1)^2}\sum_{j,k: j\neq k, j\neq i,k\neq i}K(x^i-x^j)\cdot K(x^i-x^k) \mathbb{P}(I(i,j)I(i,k)=1).
\end{split}
\end{gather*}
By similar argument,
\[
\mathbb{P}( I(i,j)I(i,k)=1)=\frac{{np-3 \choose p-3}M(n-1)}{M(n)}
=\frac{(p-1)(p-2)}{(N-1)(N-2)}.
\]
Hence,
\begin{gather*}
\begin{split}
\mathbb{E}|\chi_{m,i}(x)|^2
=&~\mathbb{E}|f_{m,i}|^2-(\mathbb{E}|f_{m,i}|)^2 \\
=&\left(\frac{1}{p-1}-\frac{1}{N-1} \right)\Big(\frac{1}{N-1}\sum_{j\neq i}|K(x^i-x^j)|^2 \\
&-\frac{1}{(N-1)(N-2)}\sum_{i,j: j\neq k, j\neq i, k\neq i}K(x^i-x^j)\cdot K(x^i-x^k)\Big) \\
=&\left(\frac{1}{p-1}-\frac{1}{N-1} \right)\frac{1}{N-2}
\sum_{j: j\neq i}\left|K(x^i-x^j)-\frac{1}{N-1}\sum_{k: k\neq i}K(x^i-x^k)\right|^2.
\end{split}
\end{gather*}
\end{proof}

The following simple fact of probability will be useful later.
\begin{lemma}\label{lmm:normofrandomsum}
Fix $i\in\{1,\ldots, N\}$. Let $\mathcal{C}_{\theta}$ be the random batch of size $p$ that contains $i$ in the random division. 
Let $Y_i$ ($1\le i\le N$) be $N$ random variables (or random vectors) that are independent of $\mathcal{C}_{\theta}$. Then,
\begin{gather}
\left\|\frac{1}{p-1}\sum_{j\in\mathcal{C}_{\theta},j\neq i}Y_j\right\|\le \max_j \|Y_j\|.
\end{gather}
\end{lemma}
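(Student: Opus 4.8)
The plan is to square out the $L^2(\Omega)$ norm, and the essential point is to avoid a naive triangle inequality (which would leave an unwanted factor of $\sqrt{(N-1)/(p-1)}$) by first exploiting the convexity of $v\mapsto|v|^2$. First I would rewrite the sum over the random batch using indicators: letting $I(i,j)=1$ denote the event that $i$ and $j$ lie in the same batch,
\[
\frac{1}{p-1}\sum_{j\in\mathcal{C}_{\theta},j\neq i}Y_j = \frac{1}{p-1}\sum_{j: j\neq i}Y_j\,1_{I(i,j)=1},
\]
where for each realization of the random division exactly $p-1$ of the indicators are nonzero.

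Next, for every fixed sample point the inner expression is an average of $p-1$ vectors, so Jensen's inequality (convexity of $|\cdot|^2$) yields the pointwise bound
\[
\left|\frac{1}{p-1}\sum_{j\in\mathcal{C}_{\theta},j\neq i}Y_j\right|^2 \le \frac{1}{p-1}\sum_{j: j\neq i}|Y_j|^2\,1_{I(i,j)=1}.
\]
I would then take expectations. Because the $Y_j$ are independent of the random division, each product factorizes, and invoking $\mathbb{P}(I(i,j)=1)=\frac{p-1}{N-1}$ (already established in the proof of Lemma \ref{lmm:averagefunc}) gives
\[
\mathbb{E}\left|\frac{1}{p-1}\sum_{j\in\mathcal{C}_{\theta},j\neq i}Y_j\right|^2 \le \frac{1}{p-1}\sum_{j:j\neq i}\mathbb{E}|Y_j|^2\cdot\frac{p-1}{N-1} = \frac{1}{N-1}\sum_{j: j\neq i}\|Y_j\|^2.
\]

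Finally, bounding each summand by the maximum gives $\frac{1}{N-1}\sum_{j\neq i}\|Y_j\|^2 \le \max_j\|Y_j\|^2$, and taking square roots completes the proof. The only genuine obstacle here is conceptual rather than computational: one must resist distributing the $L^2$ norm across the sum term-by-term via the triangle inequality, since that discards the crucial fact that only $p-1$ of the $N-1$ indicators are active at once. Applying Jensen before taking expectations is precisely what preserves the correct normalization and delivers the clean bound, independent of both $p$ and $N$.
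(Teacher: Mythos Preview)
Your proof is correct and takes a genuinely different route from the paper's. The paper expands the square into a double sum,
\[
\frac{1}{(p-1)^2}\sum_{j,k:j\neq i,k\neq i}\mathbb{E}\big(I(i,j)I(i,k)\big)\,\mathbb{E}(Y_j\cdot Y_k),
\]
bounds each $\mathbb{E}(Y_j\cdot Y_k)$ by $(\max_j\|Y_j\|)^2$ via Cauchy--Schwarz, and then identifies the remaining combinatorial sum as $\big\|\tfrac{1}{p-1}\sum_{j\in\mathcal{C}_\theta,j\neq i}1\big\|^2=1$. Your argument instead applies Jensen's inequality pointwise \emph{before} taking expectation, which collapses the double sum to a single one and only requires the first-order probability $\mathbb{P}(I(i,j)=1)=\tfrac{p-1}{N-1}$ rather than the pair probabilities $\mathbb{P}(I(i,j)I(i,k)=1)$. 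This is slightly more elementary and yields the sharper intermediate bound $\tfrac{1}{N-1}\sum_{j\neq i}\|Y_j\|^2$ (the average rather than the maximum), which could be useful in situations where the $\|Y_j\|$ are highly inhomogeneous. The paper's approach, on the other hand, would adapt more readily to non-convex functionals of the random sum.
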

\begin{proof}
We again use $I(i,j)=1$ to indicate that $i,j$ are in the same batch.  By the definition and independence,
\[
\begin{split}
\left\|\frac{1}{p-1}\sum_{j\in\mathcal{C}_{\theta},j\neq i}Y_j\right\|^2
&=\frac{1}{(p-1)^2}\mathbb{E}\left|\sum_{j: j\neq i}I(i,j)Y_j\right|^2 \\
&=\frac{1}{(p-1)^2}\sum_{j,k:j\neq i,k\neq i}\mathbb{E}(I(i,j)I(i,k))\mathbb{E}(Y_j\cdot Y_k)\\
&\le (\max_j \|Y_j\|)^2 \frac{1}{(p-1)^2}\sum_{j,k:j\neq i,k\neq i}\mathbb{E}(I(i,j)I(i,k))\\
&=(\max_j \|Y_j\|)^2 \left\|\frac{1}{p-1}\sum_{j\in\mathcal{C}_{\theta},j\neq i} 1 \right\|^2.
\end{split}
\]
The claim thus follows. Note that the independence is used in the second equality.
\end{proof}

We now move onto the estimates of $X$ and $\tilde{X}$.
\begin{lemma}\label{lmm:momcond}
Suppose the coupling constructed in \eqref{eq:coupling} and Assumption \ref{ass:basicassumption} hold. Then, for any $q\ge 2$, there exists a constant $C_q$ independent of $N$ such that for any $i$
\begin{gather}\label{eq:momentcontrol}
\sup_{t\ge 0}(\mathbb{E}|X^i(t)|^q+\mathbb{E}|\tilde{X}^i(t)|^q)\le C_q.
\end{gather}
Besides, for any $m>0$ and $q\ge 2$,
\begin{gather}\label{eq:conditionmoment}
\sup_{t\in [t_{m-1}, t_m)}\left|\mathbb{E}(|\tilde{X}^i(t)|^q |\mathcal{F}_{m-1})\right|
\le C_1|\tilde{X}^i(t_{m-1})|^q+C_2.
\end{gather}
holds almost surely.

Moreover, for $t\in [t_{m-1}, t_m)$ and all $i$, there exist a constant $C>0$ independent of $\xi_m$, $m$, $ N$ and an index $q>0$ such almost surely that 
\begin{gather}\label{eq:basicconditon}
|\mathbb{E}(\tilde{X}^i(t)-\tilde{X}^i(t_{m-1}) | \mathcal{F}_{m-1})|\le C (1+|\tilde{X}^i(t_{m-1})|^{q})\tau,
\end{gather}
and in $L^2(\Omega)$ that
\begin{gather}\label{eq:condition1}
\Big\|\mathbb{E}\left(|\tilde{X}^i(t)-\tilde{X}^i(t_{m-1})|^2 | \mathcal{F}_{m-1}\right) \Big\| \le C \tau.
\end{gather}
\end{lemma}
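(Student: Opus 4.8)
The plan is to prove the four assertions as a cascade: first the uniform moment bounds \eqref{eq:momentcontrol}, then their one-step conditional version \eqref{eq:conditionmoment}, and finally read off the increment estimates \eqref{eq:basicconditon} and \eqref{eq:condition1} from the Duhamel form of the dynamics. The only two structural facts I will use from Assumption \ref{ass:basicassumption} are the strong convexity of $V$, which produces dissipation, and the boundedness of $K$, which makes every interaction term uniformly small irrespective of $N$ and $p$; indeed both $\frac{1}{N-1}\sum_{j\neq i}K(X^i-X^j)$ and $\frac{1}{p-1}\sum_{j\in\mathcal{C}_q,j\neq i}K(\cdot)$ have modulus at most $\|K\|_\infty$. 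Denote by $\kappa$ the polynomial growth exponent of $\nabla V$ from the assumption.

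For \eqref{eq:momentcontrol} I apply It\^o's formula to the Lyapunov function $|X^i|^q$ (and identically to $|\tilde X^i|^q$) for arbitrary $q\ge 2$. Strong convexity gives $\langle \nabla V(x),x\rangle\ge r|x|^2-|\nabla V(0)|\,|x|$, so the confining drift produces the dissipative term $-q|X^i|^{q-2}\langle\nabla V(X^i),X^i\rangle\le -qr|X^i|^q+Cq|X^i|^{q-1}$; the interaction contributes at most $q\|K\|_\infty|X^i|^{q-1}$ and the It\^o correction adds $\tfrac12\sigma^2 q(q-2+d)|X^i|^{q-2}$. After a stopping-time localization to make the stochastic integral a genuine martingale and Young's inequality to absorb the subleading powers into $\tfrac{qr}{2}|X^i|^q+C$, I obtain
\[
\frac{d}{dt}\mathbb{E}|X^i(t)|^q\le -\tfrac{qr}{2}\,\mathbb{E}|X^i(t)|^q+C,
\]
with $C$ depending only on $q,r,d,\sigma,\|K\|_\infty$ and $|\nabla V(0)|$ but not on $N,p,t$. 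Grönwall then gives the bound uniform in time, $N$ and $p$ (provided $\nu$ has the relevant finite moment). For $\tilde X^i$ the coefficients jump at the grid points $t_m$, yet the \emph{same} inequality holds with the \emph{same} constants on each subinterval and $\tilde X^i$ is continuous, so the bound propagates across the whole grid. Freezing the batch and the starting value by conditioning on $\mathcal{F}_{m-1}$ and integrating the same inequality over $[t_{m-1},t)$, whose length is $\le\tau$, yields $\mathbb{E}(|\tilde X^i(t)|^q\mid\mathcal{F}_{m-1})\le e^{C\tau}|\tilde X^i(t_{m-1})|^q+(e^{C\tau}-1)$, which is \eqref{eq:conditionmoment}.

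The two increment bounds come from the integral form of \eqref{eq:firstalgorithm} on $[t_{m-1},t)$,
\[
\tilde X^i(t)-\tilde X^i(t_{m-1})=\int_{t_{m-1}}^t b^i(s)\,ds+\sigma\bigl(\tilde B^i(t)-\tilde B^i(t_{m-1})\bigr),
\]
with $b^i=-\nabla V(\tilde X^i)+\frac{1}{p-1}\sum_{j\in\mathcal{C}_q,j\neq i}K(\tilde X^i-\tilde X^j)$. Since $\tilde B^i(t)-\tilde B^i(t_{m-1})$ is independent of $\mathcal{F}_{m-1}$ with zero mean, it disappears under $\mathbb{E}(\cdot\mid\mathcal{F}_{m-1})$; bounding $|b^i(s)|\le C(1+|\tilde X^i(s)|^{\kappa})+\|K\|_\infty$ by polynomial growth and controlling the conditional moment through \eqref{eq:conditionmoment} gives \eqref{eq:basicconditon} after integrating over a length-$\tau$ interval (the index there being $\kappa$). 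For \eqref{eq:condition1} I square the increment $\Delta=\tilde X^i(t)-\tilde X^i(t_{m-1})$, use $|a+b|^2\le 2|a|^2+2|b|^2$ and Cauchy--Schwarz on the drift integral to get $\mathbb{E}(|\Delta|^2\mid\mathcal{F}_{m-1})\le 2\tau\int_{t_{m-1}}^t\mathbb{E}(|b^i|^2\mid\mathcal{F}_{m-1})\,ds+2\sigma^2 d\,(t-t_{m-1})$; here the drift piece is $O(\tau^2)\bigl(1+|\tilde X^i(t_{m-1})|^{2\kappa}\bigr)$ by \eqref{eq:conditionmoment} and the noise piece is $O(\tau)$. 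Taking the $L^2(\Omega)$ norm and using the unconditional bound \eqref{eq:momentcontrol} at exponent $4\kappa$ to control $\bigl\|\,|\tilde X^i(t_{m-1})|^{2\kappa}\,\bigr\|$ reduces the drift contribution to $O(\tau^2)$, leaving $\|\mathbb{E}(|\Delta|^2\mid\mathcal{F}_{m-1})\|\le C\tau$ for bounded $\tau$.

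The main difficulty is securing the two uniformities at once in \eqref{eq:momentcontrol}: the estimate must be uniform in $t$, which forces me to keep the genuine dissipation coefficient $-qr/2$ rather than settle for a crude growth bound (the latter would blow up exponentially in $t$ and is useless for the $\sup_{t\ge0}$ in Theorem \ref{thm:mainresult}), and it must be uniform in $N$ and $p$, which is precisely where $\|K\|_\infty<\infty$ is indispensable so that neither the particle number nor the batch size ever enters the constants. A secondary technical point is legitimizing the vanishing expectation of the stochastic integrals before the moments are known finite, handled by the standard stopping-time localization and a monotone passage to the limit. The remaining ingredients --- Young's and Cauchy--Schwarz inequalities and the patching of the continuous process $\tilde X^i$ across the time grid --- are routine.
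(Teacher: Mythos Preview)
Your proposal is correct and follows essentially the same route as the paper: It\^o's formula on $|X^i|^q$ with the dissipation from strong convexity and $\|K\|_\infty$ handling the interaction (uniformly in $N$ and $p$), the conditional version on $[t_{m-1},t_m)$ giving \eqref{eq:conditionmoment} and then \eqref{eq:momentcontrol} for $\tilde X^i$, and the increment bounds read off from the integrated dynamics. The only cosmetic difference is that for \eqref{eq:condition1} the paper applies It\^o's formula directly to $|\tilde X^i(t)-\tilde X^i(t_{m-1})|^2$ rather than squaring the Duhamel representation and using Cauchy--Schwarz; both arrive at the same $O(\tau)$ conclusion, and your explicit mention of the stopping-time localization is a welcome bit of rigor the paper leaves implicit.
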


\begin{proof}
Consider system \eqref{eq:interactingps} first. By It\^o's calculus,
\begin{gather*}
\begin{split}
\frac{d}{dt}\mathbb{E}|X^i|^q=& ~q\mathbb{E}|X^i|^{q-2}\left(-X^i\cdot\nabla V(X^i)+\frac{1}{N-1}\sum_{j: j\neq i}X^i\cdot K(X^i-X^j)\right) \\
&+\frac{1}{2}q(q+d-2)\sigma^2\mathbb{E}|X^i|^{q-2}.
\end{split}
\end{gather*}
Note that
\[
X^i\cdot\nabla V(X^i)=(X^i-0)\cdot (\nabla V(X^i)-\nabla V(0))+X^i\cdot\nabla V(0) \ge r|X^i|^2+X^i\cdot\nabla V(0).
\]
Recalling also that $K$ is bounded,
\begin{multline}\label{eq:lmmaux1}
\frac{d}{dt}\mathbb{E}|X^i|^q\le -qr \mathbb{E}|X^i|^q
+q(\|K\|_{\infty}+|\nabla V(0)|)\mathbb{E}|X^i|^{q-1}
+\frac{1}{2}q(q+d-2)\sigma^2\mathbb{E}|X^i|^{q-2}.
\end{multline}
By Young's inequality,
\[
\mathbb{E}(|X^i|^{q-1})
\le \frac{(q-1)\nu}{q} \mathbb{E}(|X^i|^{q})
+\frac{1}{q\nu^{q-1}},
\]
the second term on the right hand side of \eqref{eq:lmmaux1} is therefore controlled. 
If $q=2$, the last term on the right hand side of \eqref{eq:lmmaux1} is controlled by a constant. Otherwise, one can apply Young's inequality similarly to control it with $\mathbb{E}(|X^i|^{q})$. Clearly, when choosing $\nu$ fixed but small enough, $\mathbb{E}|X_i|^q$ can be uniformly bounded in time for any $q\ge 2$, and the bound is independent of $N$.

For $\tilde{X}^i$, we first consider a given random division so that the equation is given by
\begin{gather}\label{eq:tildeXexperiment}
d\tilde{X}^i=-\nabla V(\tilde{X}^i)dt+\frac{1}{p-1}\sum_{j\in \mathcal{C}_{\theta},j\neq i}K(\tilde{X}^i-\tilde{X}^{j})\,dt+\sigma dB^i,
\end{gather}
where $\mathcal{C}_{\theta}$ is the random batch that contains $i$ from the random division at $t_{m-1}$, or $\xi_{m-1}$.
Now, consider that $t\in [t_{m-1}, t_m)$. Conditioning on $\mathcal{F}_{m-1}$ and applying It\^o's calculus on $[t_{m-1}, t_m)$, one also has
\begin{multline*}
\frac{d}{dt}\mathbb{E}(|\tilde{X}^i|^q|\mathcal{F}_{m-1})=q\mathbb{E}\Big[|\tilde{X}^i|^{q-2}(-\tilde{X}^i\cdot\nabla V(\tilde{X}^i)+\\
\tilde{X}^i\cdot \frac{1}{p-1}\sum_{j\in \mathcal{C}_{\theta},j\neq i}K(\tilde{X}^i-\tilde{X}^{j})))| \mathcal{F}_{m-1}\Big] 
+\frac{1}{2}q(q+d-2)\sigma^2\mathbb{E}(|\tilde{X}^i|^{q-2}|\mathcal{F}_{m-1}).
\end{multline*}
Using similar estimates,
\[
\frac{d}{dt}\mathbb{E}(|\tilde{X}^i|^q|\mathcal{F}_{m-1})\le -r_1  \mathbb{E}(|\tilde{X}^i|^q|\mathcal{F}_{m-1})+C_1,
\]
for some $r_1>0$ and  constant $C_1$ that are deterministic. This clearly yields \eqref{eq:conditionmoment}.
Taking expectation about the randomness in $\mathcal{F}_{m-1}$ on both sides, one then obtains the same inequality as for $X$. Hence, \eqref{eq:momentcontrol} follows.

Equation \eqref{eq:tildeXexperiment} gives
\begin{gather*}
\begin{split}
\mathbb{E}\Big(\tilde{X}^i(t)-\tilde{X}^i(t_{m-1}) \Big| \mathcal{F}_{m-1}\Big)
=& -\int_{t_{m-1}}^t\mathbb{E}(\nabla V(\tilde{X}^i)| \mathcal{F}_{m-1})\,ds \\
 &+\int_{t_{m-1}}^t\mathbb{E}\left( \frac{1}{p-1}\sum_{j\in \mathcal{C}_{\theta},j\neq i}K(\tilde{X}^i-\tilde{X}^{j}) | \mathcal{F}_{m-1}\right)\,ds.
\end{split}
\end{gather*}
Note that $K$ is bounded and $|\nabla V(x)|\le C(1+|x|^q)$ for some $q>0$.   
Together with the first part of this lemma (moment control), this implies \eqref{eq:basicconditon}.

For the last claim, It\^o's formula implies that
\begin{multline*}
\frac{d}{dt}\mathbb{E}\left[(\tilde{X}^i(t)-\tilde{X}^i(t_{m-1}))^2|\mathcal{F}_{m-1} \right] \\
=2\mathbb{E}\left[ \Big(\tilde{X}^i(t)-\tilde{X}^i(t_{m-1})\Big)\cdot(-\nabla V(\tilde{X}^i)+\frac{1}{p-1}\sum_{j\in \mathcal{C}_{\theta},j\neq i}K(\tilde{X}^i-\tilde{X}^{j}))
|\mathcal{F}_{m-1} \right]
+\sigma^2.
\end{multline*}
Similarly,
\[
\left\|\mathbb{E}\left[ \Big(\tilde{X}^i(t)-\tilde{X}^i(t_{m-1})\Big)\cdot\left(-\nabla V(\tilde{X}^i)+\frac{1}{p-1}\sum_{j\in \mathcal{C}_{\theta},j\neq i}K(\tilde{X}^i-\tilde{X}^{j})\right)
|\mathcal{F}_{m-1} \right] \right\|\le C
\]
is uniformly bounded by H\"older's inequality for conditional expectations (though $\tilde{X}^i(t)-\tilde{X}^i(t_{m-1})$ is small, there is no need to get finer bound since we have extra $\sigma^2$). The last claim also follows.

\end{proof}

The following gives the control on $Z$.
\begin{lemma}\label{lmm:Zest}
For $t\in [t_{m-1}, t_m)$, 
\begin{gather}
\begin{split}
 & \|Z^i(t)-Z^i(t_{m-1})\|\le C \tau,\\
 & \Big|\mathbb{E}\left((Z^i(t)-Z^i(t_{m-1}))\cdot\chi_{m,i}(X(t))\right)\Big|\le \|\Lambda_i\|_{\infty}\frac{\tau}{p-1} 
+C\left(\tau^2+\|Z^i(t)\|\tau\right).
\end{split}
\end{gather}
\end{lemma}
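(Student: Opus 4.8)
The plan is to integrate the two coupled dynamics on $[t_{m-1},t)$ and split the increment of the error. Subtracting \eqref{eq:interactingps} from \eqref{eq:firstalgorithm} and using that the Brownian terms cancel under the synchronous coupling \eqref{eq:coupling}, I write
\[
Z^i(t)-Z^i(t_{m-1})=A+B+D,
\]
where $A=\int_{t_{m-1}}^t(\nabla V(X^i)-\nabla V(\tilde X^i))\,ds$, $B=\int_{t_{m-1}}^t\frac{1}{N-1}\sum_{j\neq i}(K(\tilde X^i-\tilde X^j)-K(X^i-X^j))\,ds$, and $D=\int_{t_{m-1}}^t\chi_{m,i}(\tilde X(s))\,ds$. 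The first estimate is then immediate: $K$ is bounded, so the integrands of $B$ and $D$ have $L^2(\Omega)$ norm $\le 2\|K\|_\infty$, while the integrand of $A$ is controlled by the polynomial growth of $\nabla V$ together with the uniform moment bounds of Lemma \ref{lmm:momcond}. Hence $\|A\|,\|B\|,\|D\|\le C\tau$, which gives $\|Z^i(t)-Z^i(t_{m-1})\|\le C\tau$.

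For the second estimate I dispose of $A$ and $B$ by Cauchy--Schwarz against $\chi_{m,i}(X(t))$. Using that $\nabla^2V$ has polynomial growth and $K$ is Lipschitz (so the integrands are pointwise $\lesssim|Z^i|+|Z^j|$), together with exchangeability (which makes $\|Z^j\|=\|Z^i\|$) and the first estimate (which gives $\|Z^i(s)\|\le\|Z^i(t)\|+C\tau$ on the interval), one obtains $\|A\|+\|B\|\le C(\|Z^i(t)\|+\tau)\tau$; since $\|\chi_{m,i}(X(t))\|\le 2\|K\|_\infty$, these two pieces contribute at most $C(\tau^2+\|Z^i(t)\|\tau)$.

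The heart of the matter is $D$, whose contribution is $\int_{t_{m-1}}^t\mathbb{E}[\chi_{m,i}(\tilde X(s))\cdot\chi_{m,i}(X(t))]\,ds$. The crucial point is that the fully coupled process $X$ never uses the random batches, so $X(t)$ is \emph{independent} of the division $\xi_{m-1}$; applying Lemma \ref{lmm:averagefunc} conditionally on the $\sigma$-algebra generated by the initial data and Brownian paths (which determines $X(t)$ but leaves $\xi_{m-1}$ fresh) yields $\mathbb{E}|\chi_{m,i}(X(t))|^2=(\frac{1}{p-1}-\frac{1}{N-1})\mathbb{E}\Lambda_i(X(t))\le\|\Lambda_i\|_\infty/(p-1)$. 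I therefore write $\chi_{m,i}(\tilde X(s))=\chi_{m,i}(X(t))+(\chi_{m,i}(\tilde X(s))-\chi_{m,i}(X(t)))$; the first piece integrates to exactly the announced leading term $\|\Lambda_i\|_\infty\,\tau/(p-1)$, and it remains to show the correction integrates to $O(\tau^2+\|Z^i(t)\|\tau)$.

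The main obstacle is precisely this correction: a direct Cauchy--Schwarz bounds it by $\|\chi_{m,i}(\tilde X(s))-\chi_{m,i}(X(t))\|\cdot\|\chi_{m,i}(X(t))\|=O(\sqrt\tau)\cdot O(1)$, which integrates to the useless $O(\tau^{3/2})$ and would destroy the $\sqrt\tau$ rate of Theorem \ref{thm:mainresult}. The fix is to expand \emph{both} factors about their $\mathcal{F}_{m-1}$-measurable values at $t_{m-1}$: writing $\chi_{m,i}(\tilde X(s))-\chi_{m,i}(X(t))=(\chi_{m,i}(\tilde X(t_{m-1}))-\chi_{m,i}(X(t_{m-1})))+\delta\tilde\chi-R_1$ and $\chi_{m,i}(X(t))=\chi_{m,i}(X(t_{m-1}))+R_1$, where $R_1=\chi_{m,i}(X(t))-\chi_{m,i}(X(t_{m-1}))$ and $\delta\tilde\chi=\chi_{m,i}(\tilde X(s))-\chi_{m,i}(\tilde X(t_{m-1}))$ are fluctuations. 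The Lipschitz bound on $K$ together with \eqref{eq:condition1} (and its analogue for $X$) gives $\|R_1\|,\|\delta\tilde\chi\|\le C\sqrt\tau$, while It\^o's formula shows their \emph{conditional} means are only first order, $\|\mathbb{E}(R_1\mid\mathcal{F}_{m-1})\|,\|\mathbb{E}(\delta\tilde\chi\mid\mathcal{F}_{m-1})\|\le C\tau$. Expanding the product then produces three types of terms: the fluctuation$\times$fluctuation terms ($\delta\tilde\chi\cdot R_1$ and $R_1\cdot R_1$) are $O(\tau)$ by Cauchy--Schwarz of two $O(\sqrt\tau)$ quantities; the fluctuation$\times\mathcal{F}_{m-1}$-measurable cross terms are handled by conditioning on $\mathcal{F}_{m-1}$ first, so the $O(\sqrt\tau)$ martingale parts drop out and only the $O(\tau)$ conditional means survive; and the purely $\mathcal{F}_{m-1}$-measurable term $(\chi_{m,i}(\tilde X(t_{m-1}))-\chi_{m,i}(X(t_{m-1})))\cdot\chi_{m,i}(X(t_{m-1}))$ is $\le C\|Z^i(t_{m-1})\|\le C(\|Z^i(t)\|+\tau)$ by Lemma \ref{lmm:normofrandomsum}, exchangeability, and Lemma \ref{lmm:averagefunc}. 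Each term is thus $O(\tau+\|Z^i(t)\|)$ uniformly in $s$, and integrating over the interval of length $\le\tau$ gives the claimed $C(\tau^2+\|Z^i(t)\|\tau)$, which combined with the leading term closes the estimate.
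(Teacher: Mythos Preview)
Your argument is correct, but it takes a genuinely different route from the paper. The difference is in how you split the interaction increment. The paper writes
\[
\frac{1}{p-1}\sum_{j\in\mathcal{C}_\theta,\,j\neq i}K(\tilde X^i-\tilde X^j)-\frac{1}{N-1}\sum_{j\neq i}K(X^i-X^j)
=\frac{1}{p-1}\sum_{j\in\mathcal{C}_\theta,\,j\neq i}\delta K_{ij}(s)+\chi_{m,i}(X(s)),
\]
i.e.\ it keeps $\chi_{m,i}$ evaluated at the \emph{batch-independent} process $X$. This makes the dominant cross term $\mathbb{E}\big[\chi_{m,i}(X(s))\cdot\chi_{m,i}(X(t))\big]$ immediately $\le \|\Lambda_i\|_\infty/(p-1)$ by Lemma~\ref{lmm:averagefunc}, with no time expansion at all. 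The price is that the random-batch sum $\tfrac{1}{p-1}\sum_{j\in\mathcal{C}_\theta}\delta K_{ij}(s)$ now involves $Z^j(s)$ for $s>t_{m-1}$, which is \emph{not} independent of $\mathcal{C}_\theta$; the paper gets around this with a pathwise bound $|Z^j(s)|\le|Z^j(t_{m-1})|+C\tau$ almost surely (from $\tfrac12\tfrac{d}{dt}|Z^i|^2\le C|Z^i|$, using convexity of $V$ and boundedness of $K$), after which Lemma~\ref{lmm:normofrandomsum} applies cleanly.

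Your decomposition $A+B+D$ instead puts $\delta K_{ij}$ into the full $\tfrac{1}{N-1}$-average (your $B$, handled by exchangeability alone) and leaves $\chi_{m,i}(\tilde X(s))$ in $D$. This avoids the pathwise step but forces the elaborate expansion about $t_{m-1}$ to peel off the leading $\|\Lambda_i\|_\infty\tau/(p-1)$ and control the six residual terms---essentially importing the $I_2$--$I_4$ machinery of Theorem~\ref{thm:mainresult} into this lemma. Your route is purely $L^2$-based and does not need the almost-sure inequality, but it does use the bounded-$\nabla^2K$ hypothesis (for the $O(\tau)$ conditional means of $R_1$ and $\delta\tilde\chi$), whereas the paper's proof of this particular lemma only needs $K$ Lipschitz. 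Both work; the paper's is shorter here.
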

\begin{proof}
By the coupling in \eqref{eq:coupling}, $Z^i$ satisfies on $t\in [t_{m-1}, t_m)$
\begin{multline}\label{eq:eqnforZi}
dZ^i=-(\nabla V(\tilde{X}^i)-\nabla V(X^i))dt
+\frac{1}{p-1}\sum_{j\in \mathcal{C}_{\theta},j\neq i}K(\tilde{X}^i-\tilde{X}^{j})\,dt-\frac{1}{N-1}\sum_{j: j\neq i}K(X^i-X^j)\,dt.
\end{multline}
Since $\nabla V$ has polynomial growth, the claim for $\|Z^i(t)-Z^i(t_{m-1})\|$ is then an easy consequence of the $q$-moment estimates in Lemma \ref{lmm:momcond}.

Moreover, 
\[
|\nabla V(\tilde{X}^i)-\nabla V(X^i)|\le \int_0^1|(\tilde{X}^i-X^i)\cdot \nabla^2V((1-z)\tilde{X}^i+zX^i)|\,dz,
\]
and $\nabla^2V((1-z)\tilde{X}^i+zX^i)$ is controlled by $C(|\tilde{X}^i|^{q_1}+|X^i|^{q_1})$ for some $q_1>0$. Hence,
\begin{gather*}
\begin{split}
\mathbb{E}|\nabla V(\tilde{X}^i)-\nabla V(X^i)||\chi_{m,i}(X(t'))|
& \le \|\chi_{m,i}(X(t'))\|_{\infty}\|\tilde{X}^i-X^i\| (\mathbb{E}(|\tilde{X}^i|^{q_1}+|X^i|^{q_1})^2)^{1/2} \\
&\le C\|Z^i(t)\|,
\end{split}
\end{gather*}
by Lemma \ref{lmm:momcond}, where $t'\in [t_{m-1}, t_m]$ is arbitraray.

Integrating \eqref{eq:eqnforZi} in time over $[t_{m-1}, t]$, then dotting with $\chi_{m,i(X(t))}$, and taking the expectation,  one gets  
\begin{multline*}
\left| \mathbb{E}((Z^i(t)-Z^i(t_{m-1}))\cdot\chi_{m,i}(X(t))) \right|
\le C\int_{t_{m-1}}^t\|Z^i(s)\| ds+\\
\int_{t_{m-1}}^{t}\mathbb{E}\left[\left(\frac{1}{p-1}\sum_{j\in\mathcal{C}_{\theta},j\neq i}\delta K_{ij}(s)\right)\cdot\chi_{m,i}(X(t))\right]\,ds
+\mathbb{E}\int_{t_{m-1}}^{t}\chi_{m,i}(X(s))\cdot\chi_{m,i}(X(t))\,ds,
\end{multline*}
where
\begin{gather}\label{eq:deltaKnewdef}
\delta K_{ij}(s):=K(\tilde{X}^i(s)-\tilde{X}^j(s))-K(X^i(s)-X^j(s)).
\end{gather}

Since $\|Z^i(s)\|\le \|Z^1(t)\|+C\tau$, the first term is controlled by $C\|Z^i(t)\|\tau+C\tau^2$. Since $K$ is Lipschitz continuous,
\[
|\delta K_{ij}(s)|\le L(|Z^i(s)|+|Z^j(s)|).
\]
Hence,
\begin{gather}
\left\|\left(\frac{1}{p-1}\sum_{j\in\mathcal{C}_{\theta},j\neq i}\delta K_{ij}(s)\right)\right\|
\le L\left(\|Z^i(s)\|+\left\|  \frac{1}{p-1}\sum_{j\in\mathcal{C}_{\theta},j\neq i}|Z^j(s)|\right\|\right).
\end{gather}
Now, $Z^j(s)$ depends on $\mathcal{C}_{\theta}$ so it cannot be estimated directly. 
Dotting \eqref{eq:eqnforZi} with $Z^i$, one has $
\frac{1}{2}\frac{d}{dt}|Z^i|^2\le C|Z^i|$ almost surely,
which gives 
\[
|Z^i(s)|\le |Z^i(t_{m-1})|+C\tau
\]
 almost surely. Since $Z^i(t_{m-1})$ is independent of $\mathcal{C}_{\theta}$, Lemma \ref{lmm:normofrandomsum} then gives us that
 \[
\left\|  \frac{1}{p-1}\sum_{j\in\mathcal{C}_{\theta},j\neq i}|Z^j(s)|\right\| \le \|Z^1(t_{m-1})\|+C\tau
 \le \|Z^1(t)\|+\|Z^1(t)-Z^1(t_{m-1})\|+C\tau.
 \]
 Since $\|Z^1(t)-Z^1(t_{m-1})\|\le C\tau$ by what has been just proved, this term is bounded by $\|Z^1(t)\|+C\tau$.
 Hence,
 \begin{gather}\label{eq:randomsumofdeltaK}
\left\|\left(\frac{1}{p-1}\sum_{j\in\mathcal{C}_{\theta},j\neq i}\delta K_{ij}(s)\right)\right\|
\le 2L(\|Z^1(t)\|+C\tau).
\end{gather}

Since $X$ is independent of $\mathcal{C}_{\theta}$, applying Lemma \ref{lmm:averagefunc},
\[
\mathbb{E}\Big(\chi_{m,i}(X(s))\cdot\chi_{m,i}(X(t))\Big)
\le \frac{1}{p-1}\|\Lambda_i\|_{\infty},
\] 
and the claim follows.
\end{proof}

Now, we are ready to prove Theorem \ref{thm:mainresult}.
\begin{proof}[Proof of Theorem \ref{thm:mainresult}]
With the coupling \eqref{eq:coupling}, the continuous process $Z^i$ satisfies for $t\in [t_{m-1}, t_m)$ that
\begin{gather*}
\begin{split}
dZ^i =& -(\nabla V(\tilde{X}^i)-\nabla V(X^i))dt \\
          &+\frac{1}{N-1}\sum_{j: j\neq i}(K(\tilde{X}^i-\tilde{X}^j)-K(X^i-X^j)) dt +\chi_{m,i}(\tilde{X}(t))dt.
\end{split}
\end{gather*}
Using the strong convexity of $V$ and Lipscthitz continuity of $K$, 
\[
\frac{1}{2}d\mathbb{E}\sum_{i=1}^N(Z^i)^2
\le -(r-2L)\mathbb{E}\sum_{i=1}^N (Z^i)^2\,dt
+\sum_{i=1}^N \mathbb{E}Z^i(t)\cdot \chi_{m,i}(\tilde{X}(t))\,dt,
\]
where $\tilde{X}=(\tilde{X}^1, \ldots, \tilde{X}^N)\in \mathbb{R}^{Nd}$.  Due to the exchangeability, 
\begin{gather}
R(t):=\frac{1}{N}\sum_{i=1}^N \mathbb{E}Z^i(t)\cdot \chi_{m,i}(\tilde{X}(t))=\mathbb{E}Z^1(t)\cdot \chi_{m,1}(\tilde{X}(t)).
\end{gather}
For notational convenience, we introduce
\begin{gather}
u(t):=\frac{1}{N}\mathbb{E}\sum_{i=1}^N(Z^i)^2=\|Z^1(t)\|^2.
\end{gather}
Then,
\[
\frac{d}{dt}u(t)\le -2(r-2L)u(t)+2R(t).
\]

We now estimate $R(t)$. We rewrite it as
\begin{gather}\label{eq:Rapprox}
\begin{split}
R(t)= & \mathbb{E}Z^1(t_{m-1})\cdot \chi_{m,1}(\tilde{X}(t_{m-1}))
+\mathbb{E}Z^1(t_{m-1})\cdot (\chi_{m,1}(\tilde{X}(t))-\chi_{m,1}(\tilde{X}(t_{m-1})))\\
         &   + \mathbb{E}(Z^1(t)-Z^1(t_{m-1}))\cdot \chi_{m,1}(X(t)) \\
          & +\mathbb{E}(Z^1(t)-Z^1(t_{m-1}))\cdot (\chi_{m,1}(\tilde{X}(t))-\chi_{m,1}(X(t)))\\
          =:&I_1+I_2+I_3+I_4.
\end{split}
\end{gather}

By Lemma \ref{lmm:averagefunc}, we have 
\begin{gather}\label{eq:I1constency}
\begin{split}
I_1 & = \mathbb{E}\left(\mathbb{E}(Z^1(t_{m-1})\cdot \chi_{m,1}(\tilde{X}(t_{m-1})) )|\mathcal{G}_{m-1} \right) \\
&=\mathbb{E}\left(Z^1(t_{m-1})\cdot \mathbb{E} (\chi_{m,1}(\tilde{X}(t_{m-1}))
|\mathcal{G}_{m-1}) \right)=0.
\end{split}
\end{gather}
The second equality holds because $Z^1(t_{m-1})$ is adapted to $\mathcal{G}_{m-1}$ (note that $Z^1$ is continuous in time, so $Z^1(t_{m-1}^-)=Z^1(t_{m-1})$). Note that $\mathcal{G}_{m-1}$ is independent of the random division at $t_{m-1}$ and this is why we can get $I_1=0$ by regarding $\tilde{X}(t_{m-1})$ as a given fixed point.  Equation \eqref{eq:I1constency} is the consistency that ensures convergence.

For term $I_2$, 
\begin{gather}
\begin{split}
I_2 &=\mathbb{E}\Bigg( Z^1(t_{m-1})\cdot \mathbb{E}(\chi_{m,1}(\tilde{X}(t))-\chi_{m,1}(\tilde{X}(t_{m-1})) |\mathcal{F}_{m-1}) \Bigg) \\
& \le C\|Z^1(t_{m-1})\| \|\mathbb{E}(\chi_{m,i}(\tilde{X}(s))-\chi_{m,i}(\tilde{X}(t_{m-1})) 
|\mathcal{F}_{m-1})\|.
\end{split}
\end{gather}
Clearly,
\[
\mathbb{E}(\chi_{m,i}(\tilde{X}(s))-\chi_{m,i}(\tilde{X}(t_{m-1})) |\mathcal{F}_{m-1})
=\frac{1}{p-1}\sum_{j\in \mathcal{C}_{\theta},j\neq i} \mathbb{E}(\delta \tilde{K}^{ij}|\mathcal{F}_{m-1})
-\frac{1}{N-1}\sum_{j:j\neq i}\mathbb{E}(\delta\tilde{K}^{ij}|\mathcal{F}_{m-1}),
\]
where 
\[
\delta\tilde{K}^{ij}=K(\tilde{X}^i(s)-\tilde{X}^j(s))-K(\tilde{X}^i(t_{m-1})-\tilde{X}^j(t_{m-1})).
\]
Denote $\delta\tilde{X}^j:=\tilde{X}^j(s)-\tilde{X}(t_{m-1})$. Performing Taylor expansion around $t_{m-1}$, one has
\[
\delta\tilde{K}^{ij}=\nabla K(\tilde{X}^i(t_{m-1})-\tilde{X}^j(t_{m-1}))\cdot(\delta\tilde{X}^i-\delta\tilde{X}^j)
+\frac{1}{2}M:(\delta\tilde{X}^i-\delta\tilde{X}^j)\otimes(\delta\tilde{X}^i-\delta\tilde{X}^j),
\]
with $M$ being a random variable (matrix) bounded by $\|\nabla^2K\|_{\infty}$.
By \eqref{eq:basicconditon}, one finds that
 \[
 |\mathbb{E}(\delta \tilde{K}^{ij}|\mathcal{F}_{m-1})|
 \le CL(1+|\tilde{X}^i(t_{m-1})|^q+|\tilde{X}^j(t_{m-1})|^q)\tau+C\mathbb{E}(|\delta\tilde{X}^i|^2+|\delta\tilde{X}^j|^2  |\mathcal{F}_{m-1}).
 \]
By \eqref{eq:condition1} and the fact that $\tilde{X}^i(t_{m-1})$ is independent of $\mathcal{C}_{\theta}$, Lemma \ref{lmm:normofrandomsum} then can control 
\[
I_2\le C\|Z^1(t_{m-1})\|\tau \le C\|Z^1(t)\|\tau+C\tau^2.
\]

By Lemma \ref{lmm:Zest}, the third term on the right hand side of \eqref{eq:Rapprox} is bounded by
\[
I_3  \le C\frac{\tau}{p-1}+C\|Z^1(t)\|\tau+C\tau^2.
\]

The $I_4$ term can be controlled by
\[
I_4\le \|Z^1(t)-Z^1(t_{m-1})\| \left( \Big\|\frac{1}{p-1}\sum_{j\in\mathcal{C}_{\theta}}\delta K_{ij}(t)\Big\|
+\Big\|\frac{1}{N-1}\sum_{j:j\neq i}\delta K_{ij}(t) \Big\|\right),
\]
where $\delta K_{ij}(t)$ is defined in \eqref{eq:deltaKnewdef}.
The first term inside the parenthesis on the right hand side is estimated by \eqref{eq:randomsumofdeltaK}. The second term is 
straightforward. Hence, noticing Lemma \ref{lmm:Zest}, one has 
\[
I_4\le \|Z^1(t)-Z^1(t_{m-1})\|(4L\|Z^1(t)\|+C\tau)\le C\|Z^1(t)\|\tau+C\tau^2.
\]

Hence,
\[
R(t) \le C\|Z^1(t)\|\tau+C\frac{\tau}{p-1}+C\tau^2
\]
and thus
\[
\frac{d}{dt}u(t)
\le -2(r-2L)u(t)+C\sqrt{u(t)}\tau+C\frac{\tau}{p-1}+C\tau^2.
\]
This inequality therefore implies that
\[
\sup_{t\ge 0}\mathbb{E}|\tilde{X}^1-X^1|^2=\sup_{t\ge 0}u(t) \le C\frac{\tau}{p-1}+C\tau^2.
\]
The closeness between $\mu_N^{(1)}$ and $\tilde{\mu}_N^{(1)}$ in $W_2$ distance is a simple application of definition \eqref{eq:W2}.
\end{proof}

We point out that the constant $C$ in Theorem \ref{thm:mainresult} stays bounded as $\sigma\to 0$. The proof in Theorem \ref{thm:mainresult} is valid for $\sigma=0$, when there is no Brownian motion.  In $I_1$ and $I_2$, we used
$\chi_{m,i}(\tilde{X}(t_{m-1}))$ to split the terms. If we use $\chi_{m,i}(X(t))$ to split the terms and use similar techniques
as in $I_3, I_4$,  then there is no requirement on $\nabla^2 K$ but we need $r>6L$.

The first part in Theorem \ref{thm:mainresult}  in fact says that we can approximate the trajectories of the particles, which is the convergence in strong sense.

\begin{figure}
\begin{center}
	\includegraphics[width=0.4\textwidth]{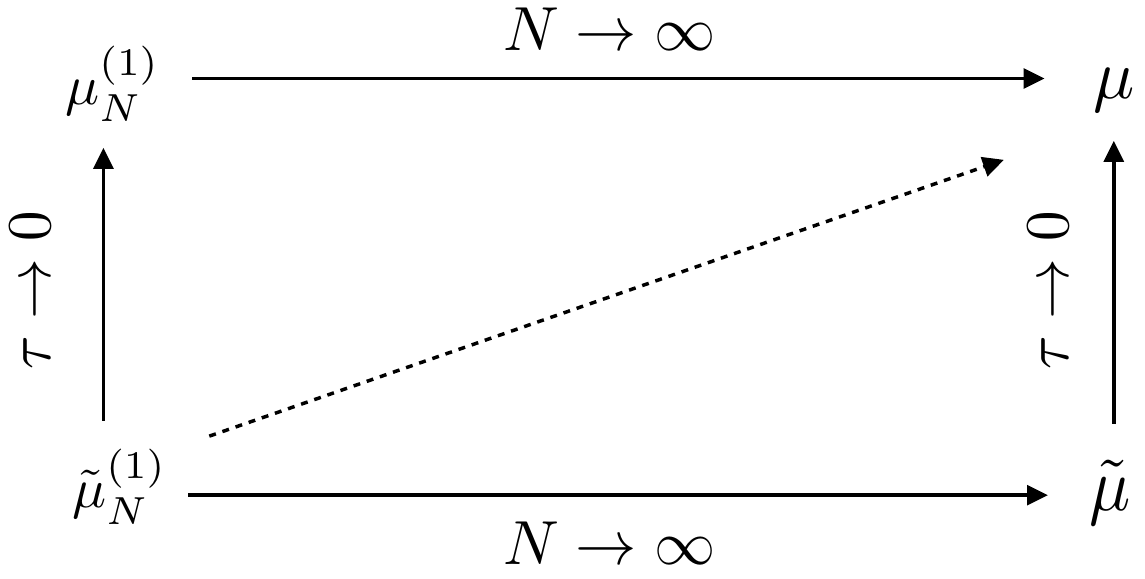}
\end{center}
\caption{Illustration for the convergence of the RBMs}
\label{fig:convergencetauN}
\end{figure}

It is straightforward to conclude that RBM-1 can approximate the mean field measure (solution to \eqref{eq:nonlinearfp}) since the mean field result is well known (see, for example, \cite{cattiaux2008,del2013uniform,durmus2018elementary}). In fact, as shown in Fig. \ref{fig:convergencetauN}, the triangle inequality yields the following.
\begin{corollary}
Suppose Assumption \ref{ass:basicassumption} holds, then
\begin{gather}
W_2(\tilde{\mu}_N^{(1)}(t), \mu(t))\le C(\sqrt{\tau}+N^{-1/2+\epsilon})
\end{gather}
for any $\epsilon>0$.
\end{corollary}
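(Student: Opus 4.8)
The plan is to obtain the estimate purely by the triangle inequality for the Wasserstein-$2$ metric, splitting the distance from the RBM-1 marginal $\tilde{\mu}_N^{(1)}$ to the mean-field density $\mu$ through the marginal $\mu_N^{(1)}$ of the exact particle system \eqref{eq:interactingps}. Concretely, I would write
\[
W_2(\tilde{\mu}_N^{(1)}(t),\mu(t)) \le W_2(\tilde{\mu}_N^{(1)}(t),\mu_N^{(1)}(t)) + W_2(\mu_N^{(1)}(t),\mu(t)),
\]
and control the two terms by two independent inputs, one of which is precisely the main theorem of this section.

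First I would invoke Theorem \ref{thm:mainresult}, which already delivers $\sup_{t\ge 0} W_2(\mu_N^{(1)}(t),\tilde{\mu}_N^{(1)}(t)) \le C\sqrt{\tau}$ with $C$ independent of $N$, $p$ and $t$. This disposes of the first term and furnishes the $\sqrt{\tau}$ contribution, uniformly in time. For the second term I would appeal to the classical uniform-in-time propagation of chaos for the mean-field equation \eqref{eq:nonlinearfp}: under Assumption \ref{ass:basicassumption} the strong convexity $r>2L$ makes the deterministic flow a uniform contraction, which is exactly the structural hypothesis needed to upgrade the finite-time mean-field limit to one holding uniformly in $t$, and in this convex regime the estimate $\sup_{t\ge 0} W_2(\mu_N^{(1)}(t),\mu(t)) \le C N^{-1/2+\epsilon}$ is available in \cite{cattiaux2008,del2013uniform,durmus2018elementary}. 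Combining the two bounds and absorbing constants yields $W_2(\tilde{\mu}_N^{(1)}(t),\mu(t)) \le C(\sqrt{\tau}+N^{-1/2+\epsilon})$ for every $\epsilon>0$.

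The one point requiring care, and the only genuine obstacle, is verifying that the mean-field rate may be taken uniform in time rather than merely on compact intervals; a naive Gr\"onwall estimate would produce a constant growing like $e^{Ct}$ and destroy the uniform bound. This is where the sign condition $r>2L$ is essential: it supplies the contraction/spectral gap that keeps the mean-field constant bounded for all $t$, mirroring the very mechanism that makes Theorem \ref{thm:mainresult} uniform in time. I would also check that the moment hypotheses under which the cited results are stated are implied by Assumption \ref{ass:basicassumption}, which follows from the polynomial growth of $\nabla V$ together with the boundedness of $K$ already exploited in Lemma \ref{lmm:momcond}. Finally, the exponent $N^{-1/2+\epsilon}$ rather than a clean $N^{-1/2}$ simply reflects the logarithmic or moment corrections present in the $W_2$ propagation-of-chaos estimates for the single-particle marginal, and carries through the triangle inequality without modification.
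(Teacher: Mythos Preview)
Your proposal is correct and follows exactly the paper's approach: the triangle inequality through $\mu_N^{(1)}$, with the first leg handled by Theorem \ref{thm:mainresult} and the second by the uniform-in-time mean-field estimate from \cite{cattiaux2008}. Your additional remarks on why the contraction $r>2L$ yields a time-uniform mean-field constant are more explicit than the paper's one-line justification, but the argument is the same.
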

The error bound between $\tilde{\mu}_N^{(1)}$ and $\mu_N^{(1)}$ is $C\sqrt{\tau}$ uniformly in time with $C$ independent of $N$. The error bound between $\mu_N^{(1)}$ and $\mu$ is given by $CN^{-1/2+\epsilon}$ (\cite{cattiaux2008}). 
As shown in the figure, if one takes $N\to\infty$ first, one may get some measure $\tilde{\mu}(t)$, which is the mean field limit of the RBM-1 method. Analyzing this measure will be an interesting problem for the future.

As shown in Fig. \ref{fig:convergencetauN}, the RBMs are a class of Asymptotic-Preserving schemes \cite{jin2010asymptotic} for particle system \eqref{eq:interactingps} under the scalings of mean field limit, in that they approximate the particle system \eqref{eq:interactingps} with an error of $O(\tau)$ independent of $N$, and as $N \to \infty$, they become good approximations to the limiting mean-field equation \eqref{eq:nonlinearfp}.

\begin{remark}\label{rmk:nocontraction}
If the deterministic flow does not have the contraction property, we may only be able to prove the convergence of our methods on finite time interval $[0, T]$. Moreover, the mean field limit by Dobrushin's estimate holds on finite time interval \cite{golse2003mean,jabinquantitative,benachour1998}. Consequently, it is possible to show that 
\begin{gather}
W_2(\tilde{\mu}^{(1)}_N(t), \mu(t))\le C(T)(\sqrt{\tau}+N^{-1/2}),~\forall t\in [0, T],
\end{gather}
for general interaction potentials, where the coefficient depends on $T$ now.
\end{remark}

\begin{remark}
For $N\gg 1$ when the theory of mean field limit takes effect, the chaos could be created or propagated (in terms of the distribution of exchangeable particles). In other words, the $j$-marginal will be the tensor product of $j$ copies of one particle distribution. In this case, the joint distribution could be of low rank, which helps RBMs: the number of time steps $N_T$ can be much smaller than $N/p$ to get reasonably good approximation ability. This is definitely something that can be explored in the future.
\end{remark}

\begin{remark}
The fast Monte Carlo method in \cite{drineas2006fast1,drineas2006fast2} uses subsampling in columns or rows for matrix-matrix multiplication. The idea of subsampling as unbiased estimator is the same as the one for RBMs. 
However, besides subsampling, another key idea of RBM is re-subsampling at later time intervals so that the dynamics will be averaged out in time, which differs from the random algorithms in matrix computations. This Law of Large Numbers type feature used in time guarantees the convergence of RBMs (as indicated by the Law of Large Number type error bound in Theorem \ref{thm:mainresult}). 
\end{remark}

\begin{remark}\label{rmk:extensiontoq}
The analysis here should also work for \eqref{eq:interactingps0}, provided some analogies of Lemma \ref{lmm:averagefunc} hold (as discussed in section \ref{sec:diss}). For techanical needs, one will assume $r$ large enough so that the semigroup given by the deterministic flow is again a contraction. Besides, the analysis can be carried over directly to the $O(1)$ time simulation of second order systems such as Hamiltonian dynamics, and we provide such an example in Appendix \ref{sec:Hamil}.
\end{remark}

\begin{remark}
If one considers particles associated with charges or masses, they are not exchangeable and the current error analysis does not work directly. The consistency lemma (Lemma \ref{lmm:averagefunc}) in fact has nothing to do with exchangeability and can be generalized to interactions that dependent on the specific chosen particles.  The current proof of convergence seems to rely on exchangeability, but one can consider the charge or mass as an extended coordinate \cite{liu2017random}. With this new insight, the new 'particles' become exchangeable. We believe RBMs also work for such systems but we feel it better to leave this as a subsequent project. 
\end{remark}

\section{Numerical verification}\label{sec:numverify}

In this section, we run some numerical tests to evaluate the RBMs and verify our theory in section \ref{sec:error}. 
The first example is a simple artificial example to test the dependence of the errors on $N$ and $\tau$, which satisfies the conditions in Theorem \ref{thm:mainresult}. The second example is the Dyson Brownian motion, and it does not satisfy the conditions in Theorem \ref{thm:mainresult} as the kernel is singular. RBMs work well for both examples, which implies that the algorithms can be effective for systems far beyond the systems mentioned in section \ref{sec:error}.

\subsection{A simple test example}\label{subsec:1dsimpletest}

\begin{figure}
\begin{center}
	\includegraphics[width=0.8\textwidth]{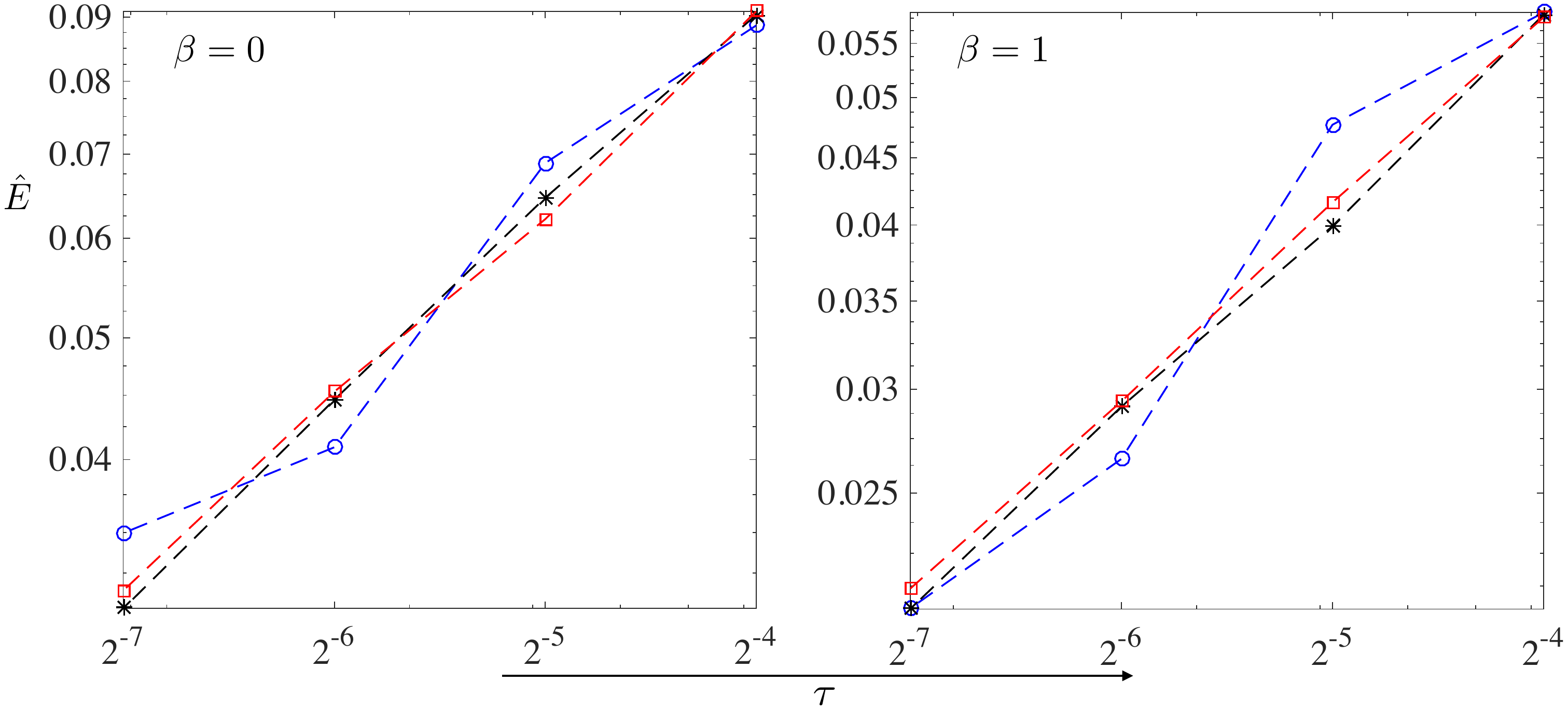}
\end{center}
\caption{Error of RBM-1 versus time step. Blue circle is for $N=50$, black star is for $N=500$, red square is for $N=2000$.}
\label{fig:errdepN}
\end{figure}

We now use the following simple test example to check how the error in RBM-1 depends on $N$, $\tau$ and $T$ ($T$ is the time point where we compute the numerical solutions). Here, the spatial dimension is $1$ ($d=1$)
\begin{gather}
\dot{X}^i=-\beta X^i+\frac{1}{N-1}\sum_{j: j\neq i}\frac{X^i-X^j}{1+|X^i-X^j|^2}.
\end{gather}
The interaction is clearly smooth, bounded and with bounded derivatives. Moreover, it has a long-range interaction. 

In principle, to evaluate $E(T)=\sqrt{\mathbb{E}|\tilde{X}^1(T)-X^1(T)|^2}$, we need to run many independent experiments and use empirical mean for the approximation. Doing this is clearly very expensive. Alternatively, we only run one experiment and use
\begin{gather}\label{eq:errorformula}
\hat{E}(T):=\sqrt{\frac{1}{N}\sum_{i=1}^N|\tilde{X}^i(T)-X^i(T)|^2}
\end{gather}
to approximate $E(T)$.

In Fig. \ref{fig:errdepN}, we show the numerical results for $T=1$. The initial distribution is taken from
\[
\rho_0(x)=\frac{\sqrt{4-x^2}}{2\pi},
\]
by the Metropolis-Hastings MCMC algorithm \cite{hastings1970monte}.
The reference solution $X^i(T)$ is obtained by solving the fully coupled system using the forward Euler scheme with $\tau=2^{-15}$. The solution $\tilde{X}^i(T)$ is generated by RBM-1 with $p=2$. Each step is solved by the forward Euler method with $\tau$ from $2^{-7}$ to $2^{-4}$. We considered $N=50, 500, 2000$ respectively.  We plot the error $\hat{E}(T)$ versus $\tau$ for these three $N$ values. The first picture in Fig. \ref{fig:errdepN} is for $\beta=0$ while the second picture in Fig. \ref{fig:errdepN} is for $\beta=1$. Clearly, the error is insensitive to the change of $N$. When $N$ is small, like $N=50$, the fluctuation in the error is kind of clear. When $N$ is large, in the log-log scale, the curve is already close to straight lines with slope approximately $0.5$, meaning that the error indeed decays like $\sqrt{\tau}$.

\begin{figure}
\begin{center}
	\includegraphics[width=0.8\textwidth]{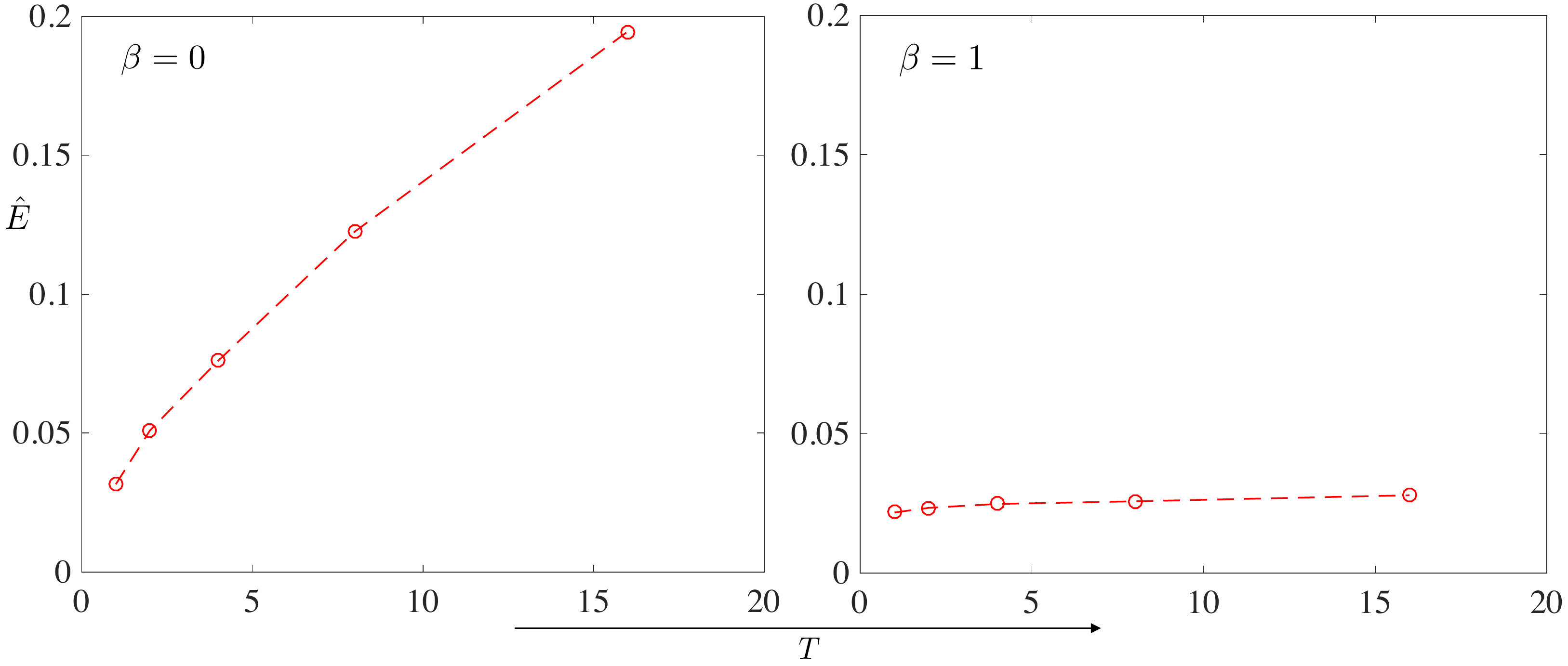}
\end{center}
\caption{Error for RBM-1 versus time.}
\label{fig:errdepT}
\end{figure}

In Fig. \ref{fig:errdepT}, we take $N=500$. The reference solution is again computed by solving the fully coupled system using the forward Euler scheme with $\tau=2^{-15}$. The algorithm is performed by taking $\tau=2^{-7}$. If there is confining potential, the error stays bounded as $T$ increases. However, if there is no confining potential, the error clearly grows, consistent with Remark \ref{rmk:nocontraction}. This is indeed natural even for usual ODE discretization for the fully coupled system \eqref{eq:interactingps}. In fact, if there is no confining potential, the numerical error grows with $T$ for the forward Euler method.

\begin{figure}
\begin{center}
	\includegraphics[width=0.6\textwidth]{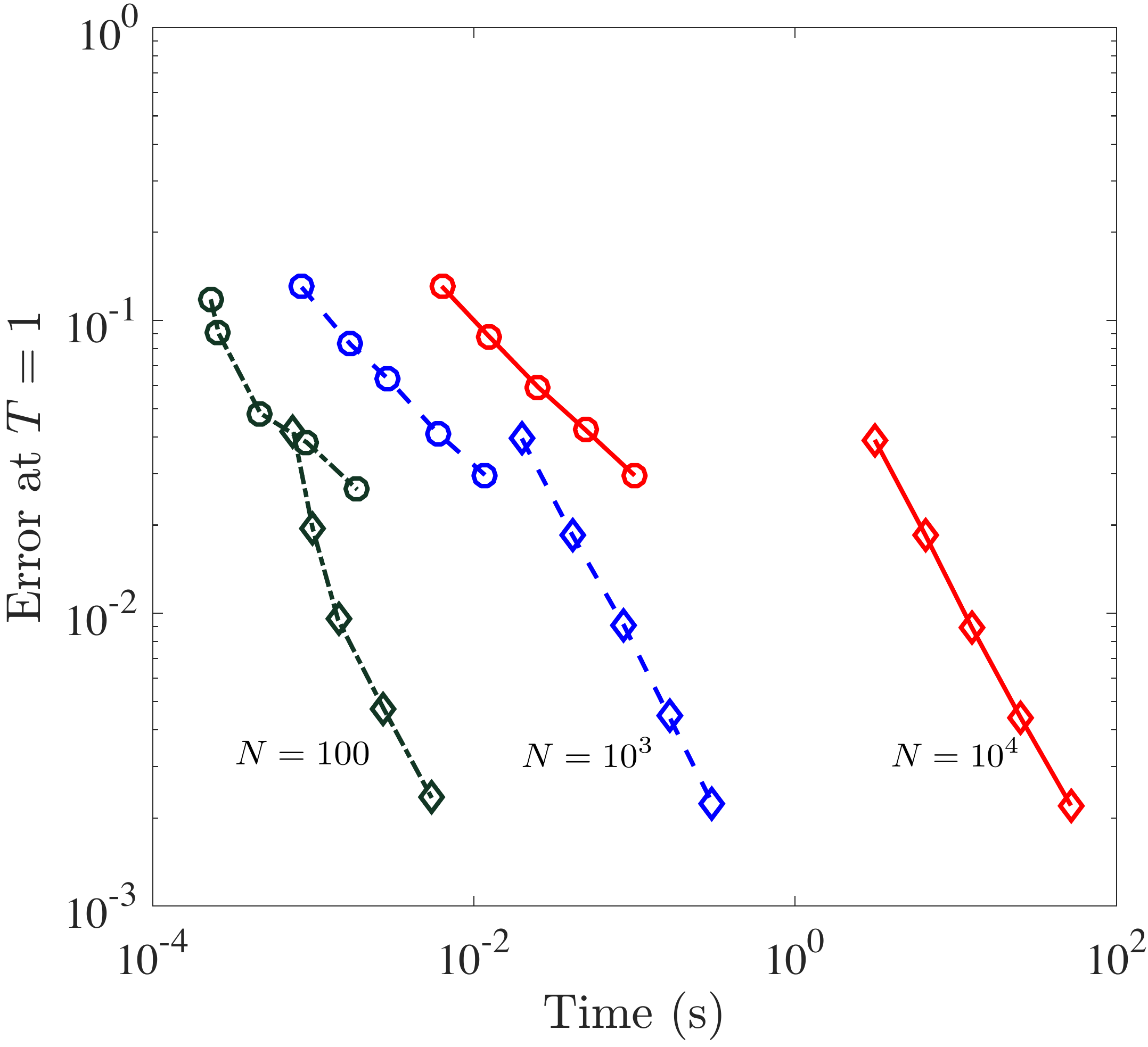}
\end{center}
\caption{Error versus CPU time. Lines with circles are for RBM-1 while lines with diamonds
are for direct computation using forward Euler discretization.}
\label{fig:cpu}
\end{figure}

To test the practical performance of RBM-1, we show the error versus true CPU time in Fig. \ref{fig:cpu} for $N=100, 10^3, 10^4$ respectively. The experiment was done using MATLAB R2015b on macOS Mojave system with 1.6 GHz Intel Core i5 processor and 4GB DDR3 memory. The data were collected by computing the times and errors using forward Euler again for steps $\tau=2^{-6}-2^{-1}$, and then plotting errors versus times.  The lines with circles are for RBM-1 while lines with diamonds are for direct computation using forward Euler discretization. One can see clearly that
for the error tolerance $\epsilon\gtrsim 10^{-2}$ when random algorithms are acceptable, RBM-1 saves a lot of time
compared with the direct ODE solvers. For example, when $N=10^3$, 
RBM-1 is faster to achieve the given error tolerance $\epsilon\ge 10^{-2}$; for $N=10^4$, RBM is much faster in the regime shown in the figure.
However, since RBMs converge with only $1/2$ order, the time needed for RBM can be larger if one desires very accurate result and $N$ is not big, and even for this case, one can consider some variance reduction method to improve the accuracy of RBMs. For $N\gtrsim 10^6$, direct method takes very long time (it scales like $N^2$), and uses much memory resource, so the direct method is already unacceptable. RBMs can be run in a reasonable time amount, and this is exactly one of the advantages of RBMs.

We now modify the above example to a second order system as studied in Appendix \ref{sec:Hamil}, which is a Hamiltonian system, to show that RBM-1 also works for second order systems on finite time. How to develop random algorithms for Hamiltonian systems for long time is an interesting question, and we think it better to leave it for future research. In particular, we consider the following system of equations for $i=1,\ldots, N$:
\begin{gather}
\left\{
\begin{split}
&\dot{X}^i=V^i,\\
&\dot{V}^i=\frac{1}{N-1}\sum_{j: j\neq i}\frac{X^i-X^j}{1+|X^i-X^j|^2}.
\end{split}
\right.
\end{gather}

For time integration, we use the Verlet scheme
\begin{gather}
\left\{
\begin{split}
&X_1^i=X_0^i+V_0^i \tau+\frac{1}{2}F_0^i\tau^2,\\
&X_{n+1}^i=2*X_n^i-X_{n-1}^i+F_n^i\tau^2,~~n\ge 1.
\end{split}
\right.
\end{gather}
For the reference solution, $F^i$ is the full force using all the particles, while for the random algorithms, $F^i$ is computed using RBM-1 with $p=2$ as before. We sample the initial positions again from $\rho_0(x)=\frac{\sqrt{4-x^2}}{2\pi}$ while sample the initial velocities  from the normal distribution  $\mathcal{N}(0, 1)$.
The error as in \eqref{eq:errorformula} is again computed at $T=1$. We consider $N=50, 500, 2000$ respectively and the reference solution is obtained using $\tau=2^{-15}$.  The results are shown in Fig. \ref{fig:second}. The curves are close to straight lines in loglog scale with slope approximately $0.5$, meaning that the errors indeed decay like $\sqrt{\tau}$ for second order systems as well.

\begin{figure}
\begin{center}
	\includegraphics[width=0.6\textwidth]{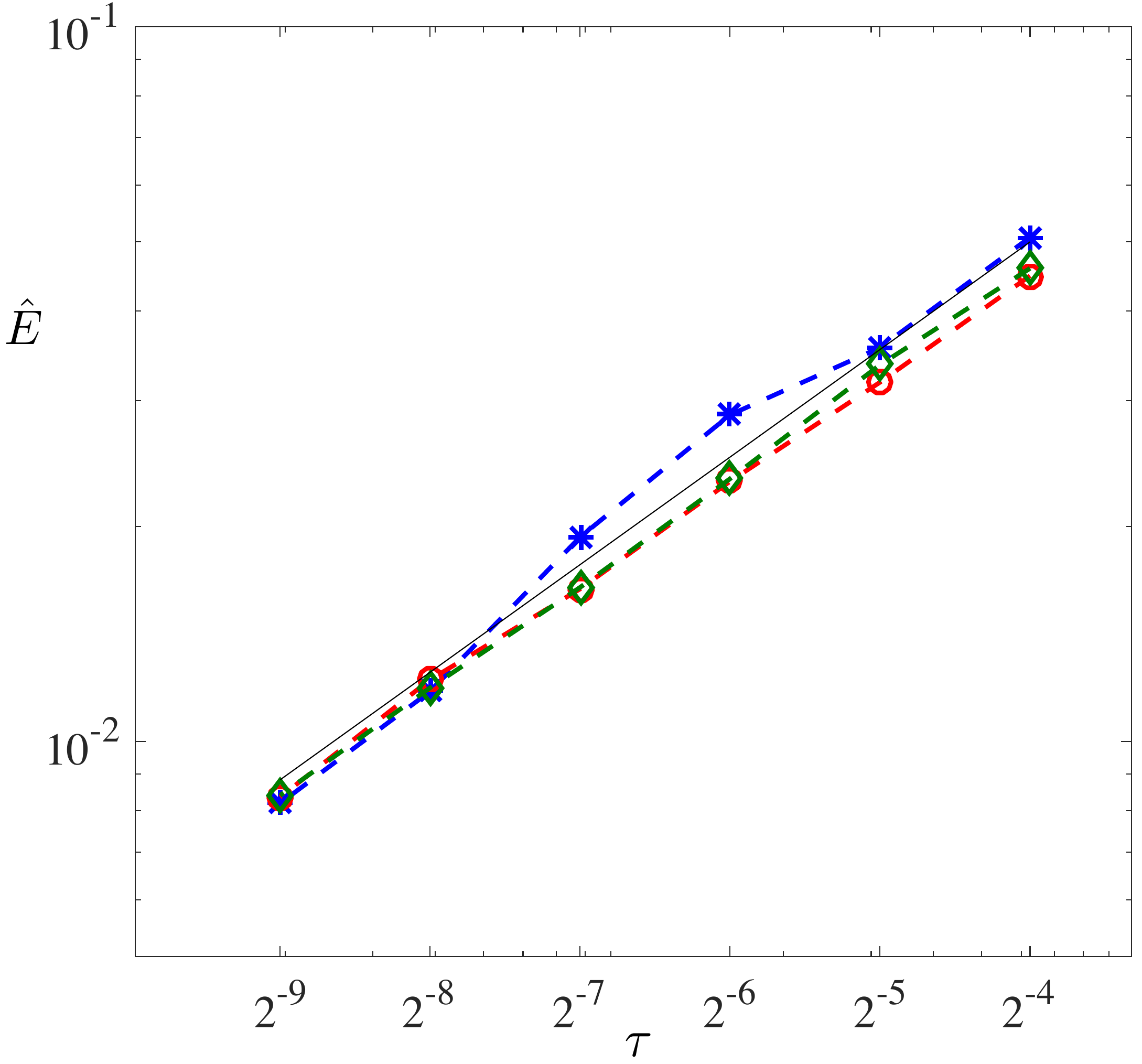}
\end{center}
\caption{$O(1)$ time simulation of a 1D Hamiltonian system. Blue star curve is for $N=50$, red circle curve is for $N=500$ while green diamond curve is for
$N=2000$. The black curve is $E=0.2 \tau^2$ for reference.}
\label{fig:second}
\end{figure}

\subsection{The Dyson Brownian motion}\label{sec:dyson}
Now, we consider a typical example in random matrix theory \cite{tao2012,erdos2017} to test the difference between RBM-1 and RBM-r. 
The random matrix we consider is a Hermitian matrix valued Ornstein-Uhlenbeck process 
\begin{gather}
dA=-\beta A\,dt+\frac{1}{\sqrt{N}}dB,
\end{gather}
where the matrix $B$ is a Hermitian matrix consisting of some Brownian motions. In particular, the diagonal elements are independent standard Brownian motions. The off-diagonal elements in the upper triangular half are of the form $\frac{1}{\sqrt{2}}(B_R+i B_I)$ where $B_R$ and $B_I$ are independent standard Brownian motions. The lower triangular half elements are determined using the Hermitian property. By It\^o's calculus \cite{dyson1962,tao2012,erdos2017}, it can be shown that the eigenvalues of $A$ satisfy the following  system of SDEs ($1\le j\le N$), called the Dyson Brownian motion:
\begin{gather}
d\lambda_j(t) =-\beta \lambda_j(t)\,dt+\frac{1}{N}\sum_{k: k\neq j}\frac{1}{\lambda_j-\lambda_k}dt
+\frac{1}{\sqrt{N}} dB_j,
\end{gather}
where $\{B_j\}$'s are independent standard Brownian motions. The Brownian motion effect is small when $N$ is large. This system therefore should have similar effects as system \eqref{eq:interactingps} with $\sigma=0$. The limiting equation for $N\to\infty$ is given by \cite{ggl19}
\begin{gather}\label{eq:dysonlimiteq}
\partial_t\rho(x,t)+\partial_x(\rho(u-\beta x))=0, ~~u(x, t)=\pi(H\rho)(x, t),
\end{gather}
where $\rho$ is the density for $\lambda$ as $N\to\infty$, $H(\cdot)$ is the Hilbert transform on $\mathbb{R}$, and $\pi=3.14\ldots$ is the circumference ratio.

Below we consider
\begin{gather}
\beta=1.
\end{gather}
It is shown that the corresponding limiting equation \eqref{eq:dysonlimiteq} has an invariant measure, given by the semicircle law:
\begin{gather}\label{eq:semicircle}
\rho(x)=\frac{1}{\pi}\sqrt{2-x^2}
\end{gather}

\begin{figure}
\begin{center}
	\includegraphics[width=1\textwidth]{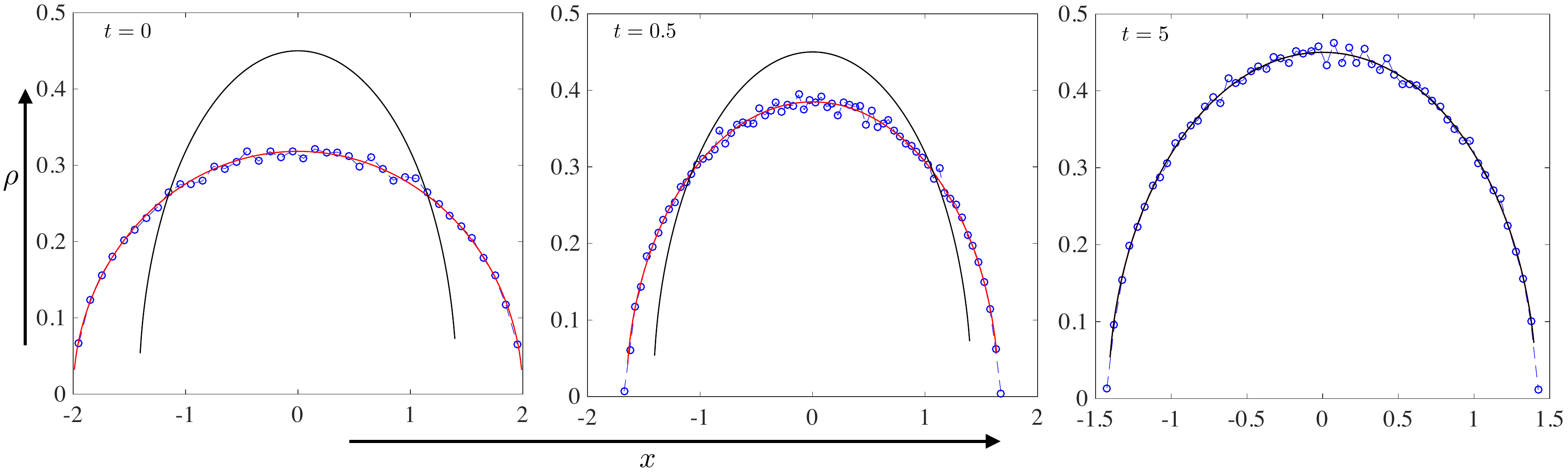}
\end{center}
\caption{The RBM-1 simulation of the Dyson Brownian motion. The empirical densities at various times are plotted. The red curve is the density distribution predicted by the analytic solution \eqref{eq:analyticaldis}. The black curve is the equilibrium semicircle law \eqref{eq:semicircle}.}
\label{fig:dysonbmDivision}
\end{figure}

To numerically test the behavior of our methods, we note an analytic solution to the limiting equation \eqref{eq:dysonlimiteq}
\begin{gather}\label{eq:analyticaldis}
\rho(x, t)=\frac{\sqrt{2\sigma(t)-x^2}}{\sigma(t)\pi},~~\sigma(t)=1+e^{-2t}.
\end{gather}
The prefactor for the interaction in the equation is $1/N$. For convenience, we change it to $1/(N-1)$ without introducing significant difference. For each iteration, since the force is singular, we adopted the splitting strategy mentioned in section \ref{sec:diss}. 
In particular, define
\begin{gather}
X^{ij}:=X^i-X^j.
\end{gather}
The SDE solving step in RBM-1 is given by
\begin{itemize}
\item  \[
 \begin{split}
        Y_m^i=\frac{1}{2}(X_{m-1}^i+X_{m-1}^j)
                      +\mathrm{sgn}(X_{m-1}^{ij})\sqrt{|X_{m-1}^{ij}|^2+4\tau},\\
        Y_m^j=\frac{1}{2}(X_{m-1}^i+X_{m-1}^j)
                      -\mathrm{sgn}(X_{m-1}^{ij})\sqrt{|X_{m-1}^{ij}|^2+4\tau}.
\end{split}
\]
\item
\[
     X_m^i=Y_{m}^i-\tau  Y_{m}^i+\sqrt{\frac{\tau}{N}}z^i,~~
       X_m^j=Y^j(t_m)-\tau  Y_m^j+\sqrt{\frac{\tau}{N}}z^j.
\]
\end{itemize}
Here, $z^i,z^j\sim \mathcal{N}(0, 1)$.

In Fig. \ref{fig:dysonbmDivision}, we show the numerical results using RBM-1 in section \ref{sec:firstrandom}. The initial data (setting $t=0$ in \eqref{eq:analyticaldis}) are sampled using the Metropolis Hastings algorithm \cite{hastings1970monte}. We plot the results at $t=0.5$ and $t=5$. The number of particles is $N=10^5$ while we use $\tau=10^{-3}$ for time step. As can be seen, RBM-1 can successfully recover the evolution of distribution and the equilibrium semicircle law \eqref{eq:semicircle}, as desired.
In Fig. \ref{fig:dysonbm}, the results of RBM-r are shown. Again, we take $N=10^5$ and $\tau=10^{-3}$. 
Within one iteration,  the same splitting scheme above is used. 
We find that RBM-r indeed has comparable results with RBM-1. 
Though RBM-r seemingly cannot simulate the dynamics of the distributions, $N/2$ iterations in fact has comparable behavior for time $\tau$. This interesting observation confirms that  RBM-r can capture the dynamics for some examples.

Since both stochastic algorithms give similar behaviors, in later examples, we only use one of them to implement for each example. If we care more about the dynamical behavior, we use RBM-1 (see the two examples in Section \ref{sec:social}). Otherwise, we use RBM-r. (In fact, the two algorithms do not show significant difference, even for evolutional problems.)

\begin{figure}
\begin{center}
	\includegraphics[width=1\textwidth]{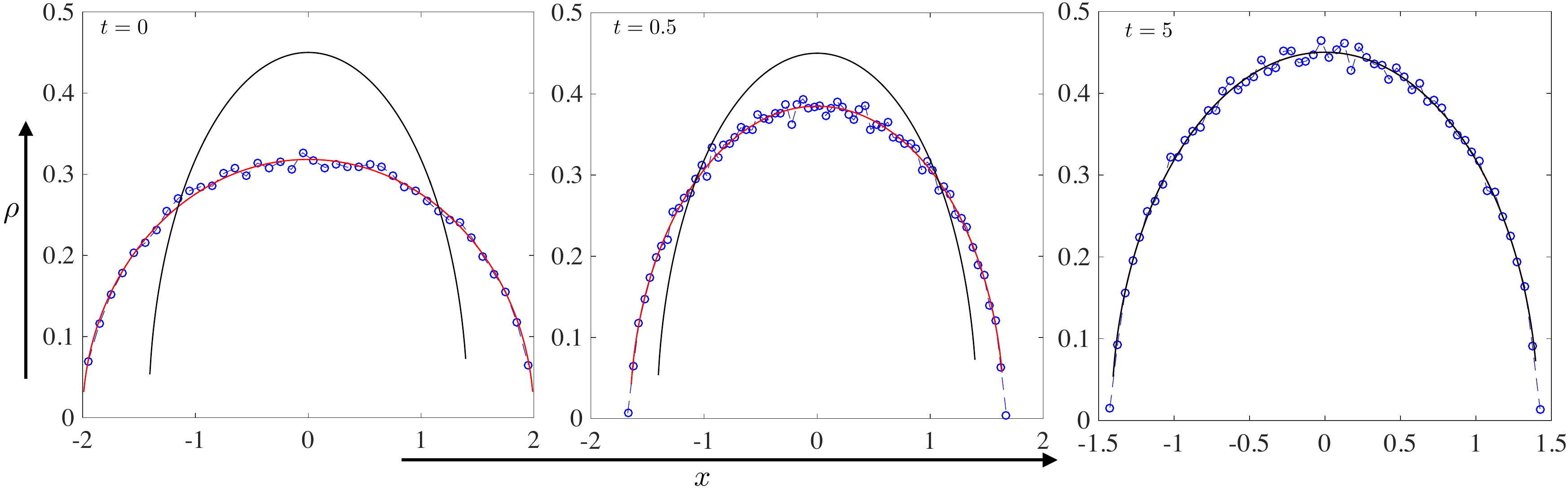}
\end{center}
\caption{The RBM-r simulation of Dyson Brownian motion. The 'time' is regarded as $\tau=10^{-3}$ for $N/2$ iterations. The red curve is the density distribution predicted by analytic solution \eqref{eq:analyticaldis}. The black curve is the equilibrium semicircle law \eqref{eq:semicircle}. }
\label{fig:dysonbm}
\end{figure}

\section{Applications}\label{sec:app}

In this section, we apply RBMs to some examples from physics, social and data sciences. 
Most of these examples do not fit into the theoretic framework of the analysis in section \ref{sec:error}. For example, in section \ref{sec:thomson}, the kernel is singular;  in \ref{sec:social}, the equilibrium still exists though the confining potential does not exist. Even though the analysis in section \ref{sec:error} does not apply for some applications, RBMs turn out to work quite well. For the wealth problem in \ref{sec:social}, we can recover the equilibrium extremely well, though the interaction kernel does not decay and there is no confining potential.
On one hand, the positive results give more supports to the algorithms; on the other hand, for some applications, the stochastic algorithms can be regarded as new models for the underlying problems. 

\subsection{Charged particles on the sphere}\label{sec:thomson}

The traditional Thomson problem is to determine the stable configuration of $N$ electrons on a sphere.
When $N$ becomes large, this could lead to the so-called spherical crystals (\cite{bowick2002,yao2016,yao2017}). The configuration may have some meta-states (local minimizers of the energy surface). When the number of particles is large, the spherical crystals have defects due to the topology of the sphere \cite{yao2016,yao2017}. 

In the $N\to\infty$ limit, hopefully, we will have a continuous distribution of charges on the sphere $\rho(\cdot)$. The problem then corresponds to determining $\rho$ such that the energy 
\begin{gather}
E(\rho)=\frac{1}{2}\iint_{S\times S}\frac{1}{|x-y|}\rho(x)\rho(y)\,dS_xdS_y
\end{gather}
is minimized. It is unclear how the energies corresponding to local minimizers are distributed (if there are any).

Regarding charges with surface densities, let us make a mathematical remark. Suppose $S$ is a surface that divides the whole space $\mathbb{R}^d$ into two halves. Assume there is a continuous distribution of charges on $S$ with density $\rho$. Let $\varphi^{\pm}(x)$ be the limits of the potential on the two sides of $S$, and $\varphi(x):=\varphi^+(x)+\varphi^-(x)$. Then, one has
\begin{gather}\label{eq:fraconS}
(-\Delta)_S^{1/2}\varphi+s(\varphi)=\rho,
\end{gather}
where $(-\Delta)_S^{1/2}$ is the $1/2$ fractional Laplacian on $S$ and $s(\varphi)$ is some pseudo-differential operators with a symbol of degree lower than $1$. In other words, to the leading order, the $1/2$ fractional Laplacian of $\varphi$ equals $\rho$. In the case that $S$ is a plane or a circle in 2D plane, $s(\varphi)=0$. In general, $s(\varphi)\neq 0$. In fact, by the jump condition of electric fields, 
\begin{gather}\label{eq:jumpcondi}
\rho=E^+\cdot n^++E^-\cdot n^-=-\frac{\partial\varphi^+}{\partial n^+}-\frac{\partial\varphi^-}{\partial n^-}.
\end{gather}
It is well-known that the Dirichlet to Neumann operator $\mathcal{L}$ is related to the $1/2$-fractional Laplacian
by
\begin{gather}\label{eq:reducetofracLap}
\mathcal{L}\varphi=-(-\Delta)_S^{1/2}f+r(f),
\end{gather}
where $n$ is the normal vector pointing into the side where the harmonic extension is performed and $r(f)$ includes lower terms. In the case that $S$ is a plane $r(f)=0$ \cite{caffarelli2007}. (In fact, if $S$ is the base of cylinders, $\mathcal{L}=-(-\Delta)_S^{1/2}$ as well. See \cite{cabre2010}. )
 In the case of the unit circle in 2D plane, one can refer to \cite{de2017fractional}. For spheres in higher dimensions, $s\neq 0$. With \eqref{eq:reducetofracLap} and \eqref{eq:jumpcondi}, \eqref{eq:fraconS} follows. 

     Interacting particle systems on the sphere can be realized experimentally by beads on water droplets immersed in oil  \cite{bausch2003}. By adjusting the environmental solution, the interacting kernel $K(\cdot)$ can also be changed, which does not have to be the Coulomb interaction. For such systems, the particles clearly have heat exchange with the enviroment so that the interacting particle system may be described by certain Langevin equations on the sphere:
\begin{gather}
\begin{split}
& dX^i=V^i\,dt,\\
& m dV^i=-\gamma V^i\,dt+\mathbb{P}_S\left(\frac{1}{N-1}\sum_{j\neq i}F(X^i-X^j) \right)\,dt
+\sqrt{2D} \,dB_S^i
\end{split}
\end{gather}
Here, $\mathbb{P}_S$ is the projection onto the tangent space of the sphere while $B_S^i$ is the spherical Brownian motion to guarantee that the particle stays on the sphere. For theories of SDEs on manifolds, one may refer
to \cite{hsu02}. 
Here, $D$ and $\gamma$ must be related as in the classical fluctuation-dissipation theorem \cite{callen1951}. 

In the overdamped limit and with suitable scaling, we then have interacting particle system on the sphere ($D_1=D/\gamma^2$)
\begin{gather}
dX^i=\mathbb{P}_S\left(\frac{1}{N-1}\sum_{j\neq i}F(X^i-X^j)\right)\,dt+\sqrt{2D_1} dB_S^i
\end{gather}
Numerically discretizing such SDEs on the sphere is an interesting topic which we leave for the future. In this work, 
we consider the Coulomb interaction with $\sigma :=\sqrt{2D_1}=0$, and use RBM-r as the stochastic strategy.  The following simple scheme for the SDE on the sphere is then adopted.
\begin{itemize}
\item Randomly picking two indices. Then, solve the following for time $t\in [t_{m-1}, t_m)$
\begin{gather}
dX^i=\sum_{j: I(i,j)=1}\frac{X^i-X^j}{|X^i-X^j|^3}\,dt,
\end{gather}
where $I(i,j)=1$ means that $i,j$ are in the same batch. This can be solved analytically. In particular, defining 
\begin{gather}
\hat{v}_m=(X_{m-1}^i-X_{m-1}^j)/|X_{m-1}^i-X_{m-1}^j|,
\end{gather}
one then has:
\begin{gather}
\begin{split}
&X_m^i=(X_{m-1}^i+X_{m-1}^j)+\hat{v}_m(|X_{m-1}^i-X_{m-1}^j|^3+6\tau)^{1/3},\\
&X_m^j=(X_{m-1}^i+X_{m-1}^j)-\hat{v}_m(|X_{m-1}^i-X_{m-1}^j|^3+6\tau)^{1/3}
\end{split}
\end{gather}

\item Project the obtained points back to the sphere by dividing its magnitude.
\end{itemize}
The reason for setting $\sigma=\sqrt{2D_1}=0$ is that we would like to explore energy stable configurations. 
We desire low temperature regime for the ground state. Besides, the stochastic algorithm also introduces randomness so that we still have chance to get out of the local minimizers.

\begin{figure}
\begin{center}
	\includegraphics[width=0.8\textwidth]{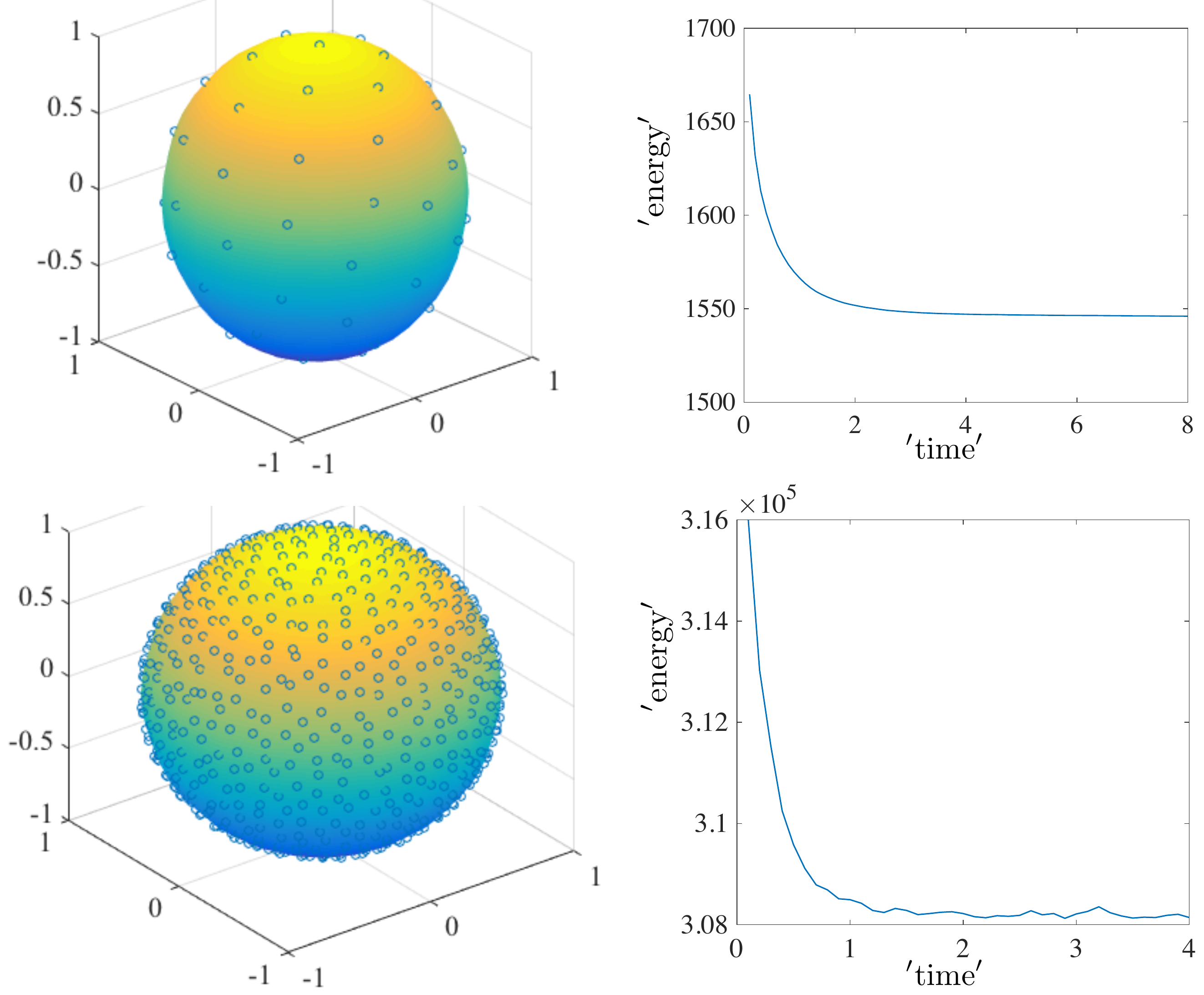}
\end{center}
\caption{Charged particles on the sphere. The first row is for $N=60$ while the second row corresponds to $N=800$. The first column shows the distributions at the end of simulation while the second column shows how the energy changes with 'time'.}
\label{fig:chargesphere}
\end{figure}

To check whether the method can give the desired ground state approximately, we randomly choose initial points on the sphere and run the above stochastic algorithms for enough iterations.
We do many experiments and check whether we always obtain the same final energy level.

 In Fig. \ref{fig:chargesphere}, we show the numerical results in two experiments. 
 The number of particles are chosen as $N=60$ and $N=800$ respectively. The initial points are chosen randomly. The time step is chosen as $\tau=10^{-4}$. As before, we regard the 'time' to be $\tau$ after $N/2$ iterations.
 For $N=60$, we see that in the eventual near stable configuration, each particle has $5$ or $6$ neighbors, and this agrees with the known results by physicists \cite{yao2016,yao2017}. This configuration is quite different from the fullerene $C_{60}$ structure which is induced by the special properties of Carbon atoms.
For the $N=800$ case, the particles are roughly distributed uniformly. For both figures, there is only one stable energy level during the whole process. This means the system was only trapped in the final stable configuration.

 To check whether there are other possible stable energy configurations, we collect in Fig. \ref{fig:chargesphere1} the energies for $N=800$ after $3*(N/2)/\tau$ iterations ($T=3$) in $25$ experiments.  The simulation shows that one can find the ground state of the configuration almost surely using the stochastic algorithm and it is not easy to be trapped in local minimizers, if there are any.
  As studied by the physicists, there are many energy levels for the Thomson sphere. However, the numerical results here seem to suggest that the stochastic algorithms can obtain the ground state with high probability and the local minimizers of the energy landscape probably has small energy barriers. 
 Maybe, some interesting phenomena happen for large $N$'s which needs further investigation.
 
 \begin{figure}
\begin{center}
	\includegraphics[width=0.4\textwidth]{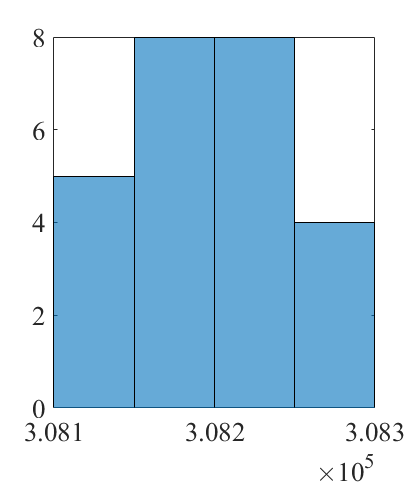}
\end{center}
\caption{Energy statistics. This histogram shows the terminal energy for 25 experiments. The terminal 'time' is $T=3$.}
\label{fig:chargesphere1}
\end{figure}

\subsection{Two examples from economics and social science}\label{sec:social}

In this section, we apply RBM-1 for two important models in social sciences, namely the evolution of wealth \cite{degond2014} and opinion dynamics \cite{motsch2014}. The obtained stochastic processes not only are algorithms for the original models, but also can be viewed as new models which consider the fact that only a few individuals commute during a short time.

\subsubsection{Stochastic dynamics of wealth}\label{sec:wealth}

We consider the model proposed by Degond et al. \cite{degond2014}, which tries to understand
the evolution of $N$ market agents with two attributes: the economic configuration $X^i$ and its wealth $Y^j$.
\begin{gather}
\begin{split}
& \dot{X}^i=V(X^i, Y^i),\\
& dY^i=-\frac{1}{N-1}\sum_{k:k\neq i}\xi_{ik}\Psi(|X^i-X^k|)\partial_y\phi(Y^i-Y^k)\,dt+\sqrt{2D} Y^i dB^i.
\end{split}
\end{gather}
The first equation describes the evolution of the economic configuration, which is driven by the local Nash equilibrium and it is related to mean-field games \cite{degond2014b,lasry2007}. The second equation describes the evolution of the wealth, which contains two mechanisms: the trading model proposed by Bouchaud and Mezart \cite{bouchaud2000}, and the geometric Brownian motion in finance proposed by Bachelier in 1900 \cite{bachelier1900}. The quantity $\sqrt{2D}$ is the volatility. The function $\phi$ is the trading interaction potential, while $\xi_{ik}\Psi(|X^i-X^k|)$ is the trading frequency. Often one assumes that $\xi_{ik}$ depends on the number of trading agents in the economic neighborhoods of $i$ and $k$:
\begin{gather}
\xi_{ik}=\xi\left(\frac{\rho^{i,\Psi}+\rho^{k,\Psi}}{2}\right),~~\rho^{i,\Psi}=\frac{1}{N-1}\sum_{\ell\neq j}\Psi(|X^{\ell}-X^i|).
\end{gather}
The mean field Fokker-Planck equation is given by
\begin{gather}
\partial_t f+\partial_x(V(x, y)f)+\partial_y(F f)=D\partial_{yy}(y^2 f)
\end{gather}
where 
\begin{multline}
F(x, y, t)=-\int_{x'\ge 0, y'\ge 0}\xi\left(\frac{1}{2}(\rho^{\Psi}(x,t)+\rho^{\Psi}(x', t))\right)\Psi(|x-x'|) \\
\times \partial_y\phi(y-y')f(x',y',t)\,dx' dy',
\end{multline}
and 
\[
\rho^{\Psi}(x, t)=\int_{x'>0, y'>0}\Psi(|x-x'|)f(x', y', t)dx' dy'.
\]

Now, if one considers the homogeneous case where the wealth dynamics is independent of the position in the economic configuration space, then $\Psi$ is a constant. In this case, the dynamics is reduced to the interacting particle system, except that one has {\it multiplicative} noise
\begin{gather}\label{eq:wealth}
 dY^i=-\frac{\kappa}{N-1} \sum_{k:k\neq i}\partial_y\phi(Y^i-Y^k)\,dt+\sqrt{2D} Y^i dB^i,
\end{gather}
where $\kappa:=\Psi\xi\left(\frac{1}{2}(\rho^{i\Psi}+\rho^{k,\Psi})\right)$ is now a constant. The mean field equation is now given by
\begin{gather}
\partial_t\rho(y)=-\partial_y(F(y)\rho(y))+D\partial_{yy}(y^2\rho(y)),
\end{gather}
where
\[
F(y)=-\kappa\int_{y\ge 0}\partial_y\phi(y-y')\rho(y')dy'.
\]
The equilibrium distribution is given by
\[
\rho_{\infty}(y)\propto \exp\left(-\frac{\alpha(y)}{D}\right),
\]
where $\alpha$ satisfies
\[
\partial_y\alpha(y)=-\frac{1}{y^2}F(y)+\frac{2D}{y}.
\]

We now apply RBM-1 with $p=2$ to \eqref{eq:wealth} and have
for $t\in [t_{m-1}, t_m)$
\begin{gather}\label{eq:wealthrandom}
dY^i=-\kappa \partial_y\phi(Y^i-Y^{\theta})\,dt+\sqrt{2D} Y^i dB^i,~i=1,\ldots, N,
\end{gather}
where $\theta$ is a random index that is grouped with $i$ in the random division.
In some sense, the stochastic dynamics described by this algorithm can model what is happening in the real world: each agent only trades with a small number of random agents at a time. Then, after some time interval, the agents trade with others. Hence, \eqref{eq:wealthrandom} is not just an algorithm but also it can be viewed as a {\it new model}.

For numerical test, choose the quadratic trading interaction as in \cite[section 3.4]{degond2014}
\[
\phi(y)=\frac{1}{2}y^2.
\]
This interaction function may not be practical as it increases with $y$ (intuitively, as $y\to\infty$, it should go to zero). The good thing is that with this interaction function, one can find the equilibrium distribution of wealth for \eqref{eq:wealth} as
\begin{gather}\label{eq:rhoinfty}
\rho_{\infty}(y)=\frac{(\kappa \eta/D)^{\kappa/D+1}}{\Gamma(\kappa/D+1)}y^{-(2+\kappa/D)}\exp\left(-\frac{\kappa \eta}{Dy}\right)1_{y>0} .
\end{gather}
This distribution is the inverse Gamma distribution and agrees with the Pareto power law for large $y$.
Here, $\eta$ is the mean wealth. 

Now, we take $\kappa=D=1$ and consider the random dynamics \eqref{eq:wealthrandom}:
\[
dY^i=-  (Y^i-Y^{\theta})\,dt+\sqrt{2} Y^i dB^i,~i=1,\ldots, N .
\]

\begin{figure}
\begin{center}
	\includegraphics[width=0.8\textwidth]{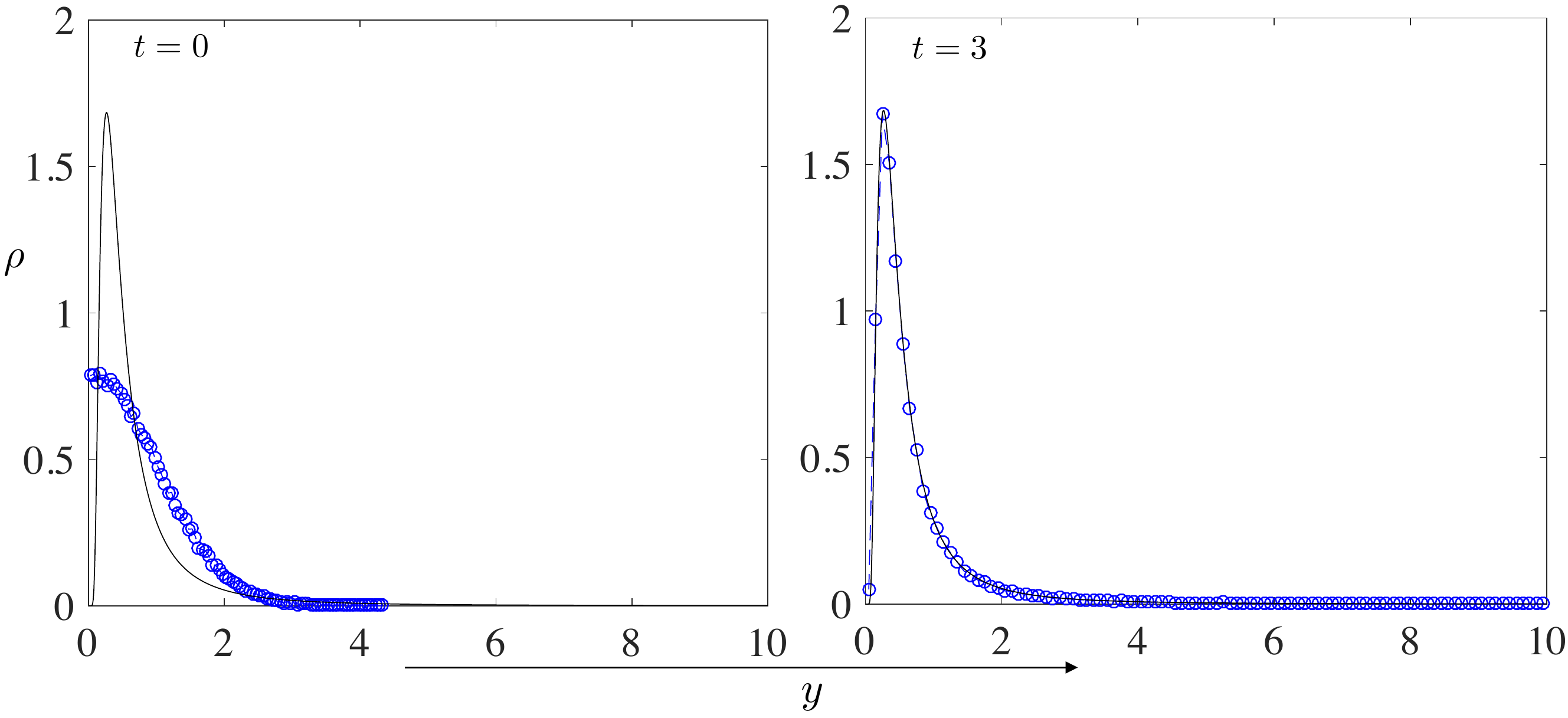}
\end{center}
\caption{Wealth distribution for stochastic dynamics}
\label{fig:wealth}
\end{figure}

In Fig. \ref{fig:wealth}, we plot the empirical distribution of the wealth with $N=10^5$ agents. We choose $\tau=10^{-3}$ and do the simulation to $T=3$. The SDE again is solved by the splitting strategy. Note that the splitting scheme preserves the mean wealth. For the test, we choose initial data as $X^i=|Y^i|$ with $Y^i\sim \mathcal{N}(0, 1)$.
The reference curve is \eqref{eq:rhoinfty} with 
\begin{gather}
\eta=\frac{\sqrt{2}}{\sqrt{\pi}}.
\end{gather}
Clearly, the numerical results agree perfectly with the expected wealth distribution.

\subsubsection{Stochastic opinion dynamics}
In this section, we consider some stochastic revisions of the opinion dynamics in \cite{motsch2014}, where the following two models are mentioned
\begin{subequations}
\begin{gather}\label{eq:opinion1}
\frac{d}{dt}X^i=\alpha \frac{1}{N-1}\sum_{j\neq i}\phi(|X^j-X^i|) (X^j-X^i)
\end{gather}
and
\begin{gather}\label{eq:opinion2}
\frac{d}{dt}X^i=\alpha \sum_{j\neq i}\frac{\phi(|X^j-X^i|)}{\sum_{k}\phi(|X^k-X^i|)} (X^j-X^i).
\end{gather}
\end{subequations}
Here, $\phi$ is called the influence function.
These models are introduced for the emergence of consensus of opinions. Note that in the original
model, the prefactor is $1/N$ and we use $1/(N-1)$ here. The effect of chis change is minor.

Here, \eqref{eq:opinion2} is not convenient for stochastic algorithms because of the denominator. Instead, we consider RBM-1 applied to \eqref{eq:opinion1}, and have the following:
\begin{gather}
dX^i=\alpha \phi(|X^{\theta}-X^i|) (X^{\theta}-X^i)\,dt
+\epsilon_N dB^i,
\end{gather}
where $\theta$, again as before, is a random index that is fixed for $t\in [t_{m-1}, t_m)$.  If $\epsilon_N=0$, this is the stochastic algorithm of \eqref{eq:opinion1} directly.
The parameter $\epsilon_N$ is to represent the random fluctuation on its opinions. If the number of agents is large, we believe this should be small.
In fact, this stochastic algorithm seems closer to what is happening in the world: each person only talks to one or few people at a time. Combining its previous opinion, it forms a new opinion.

Let us consider the influence function
\begin{gather}
\phi(r)=\chi_{[0, 1]}
\end{gather}
as in \cite[Section 3.3]{motsch2014}.

\begin{figure}
\begin{center}
	\includegraphics[width=1\textwidth]{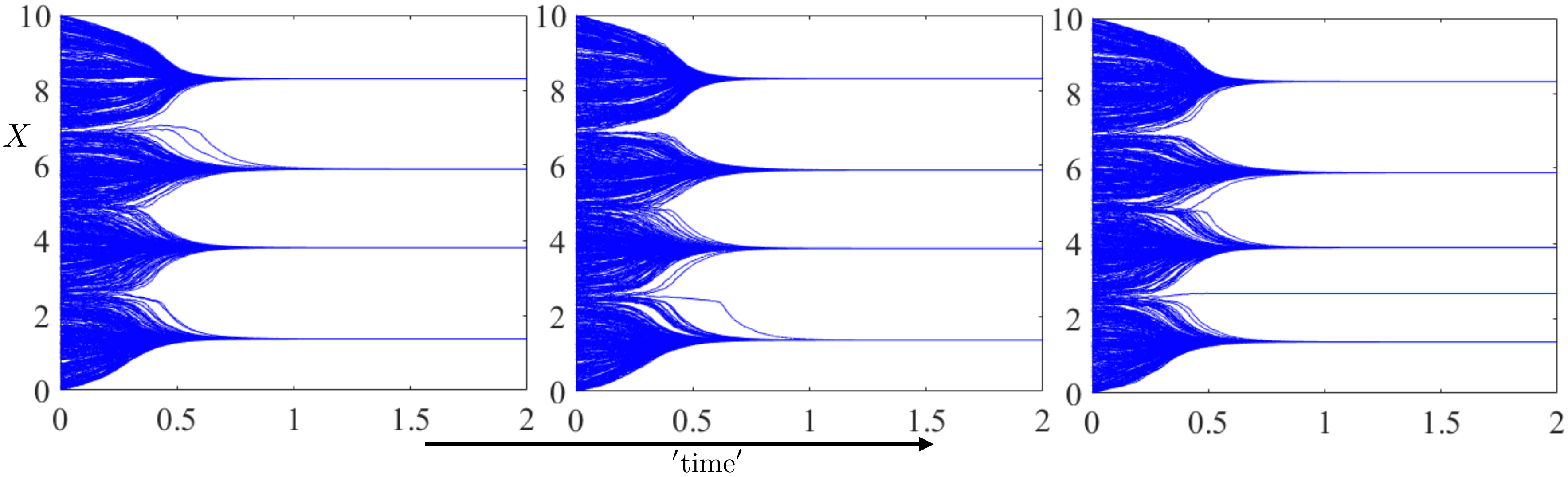}
\end{center}
\caption{Stochastic opinion dynamics versus time. Plots of three experiments of the stochastic dynamics with the same initial data. No Brownian motion ($\epsilon_N=0$).}
\label{fig:opinion}
\end{figure}
In Fig. \ref{fig:opinion},  we show the numerical results. The figure shows the results of three experiments with the same initial data. We choose $N=10^3$ and $\tau=10^{-4}$, $\alpha=40$. 
Clearly, the final stationary consensus is sensitive with respect to initial distribution. With the same initial data, though the dynamics is stochastic, the main behavior is the same for the three experiments.  There are four main clusters of consensus. However, interestingly, in some experiments (like the third picture), there are may be some individuals that do not belong to any cluster, which seems to be the case in real world: some individuals are isolated at the early stage, and after the main clusters of consensus form, they are not affected by these groups since they are so different. The randomness introduced by the algorithms does not quite affect the main clusters of consensus, and only a few individuals might behave differently due to the randomness. After certain time, when the clusters of consensus are formed, the randomness does not play any roles any more: the individuals only talk to members in their own clusters.

In Fig. \ref{fig:opinionBM},  we show the numerical results for the stochastic opinion dynamics with Brownian motion
$\epsilon_N=\frac{1}{N^{1/3}}$. The $N$, $\tau$, $\alpha$ parameters are the same.
 The evolution of clusters of consensus is roughly the same with or without the Brownian motion. However, Brownian motions indeed introduce fluctuation of opinions within the clusters. This means that the fluctuation is not very important when the main clusters of opinions are formed.

\begin{figure}
\begin{center}
	\includegraphics[width=0.8\textwidth]{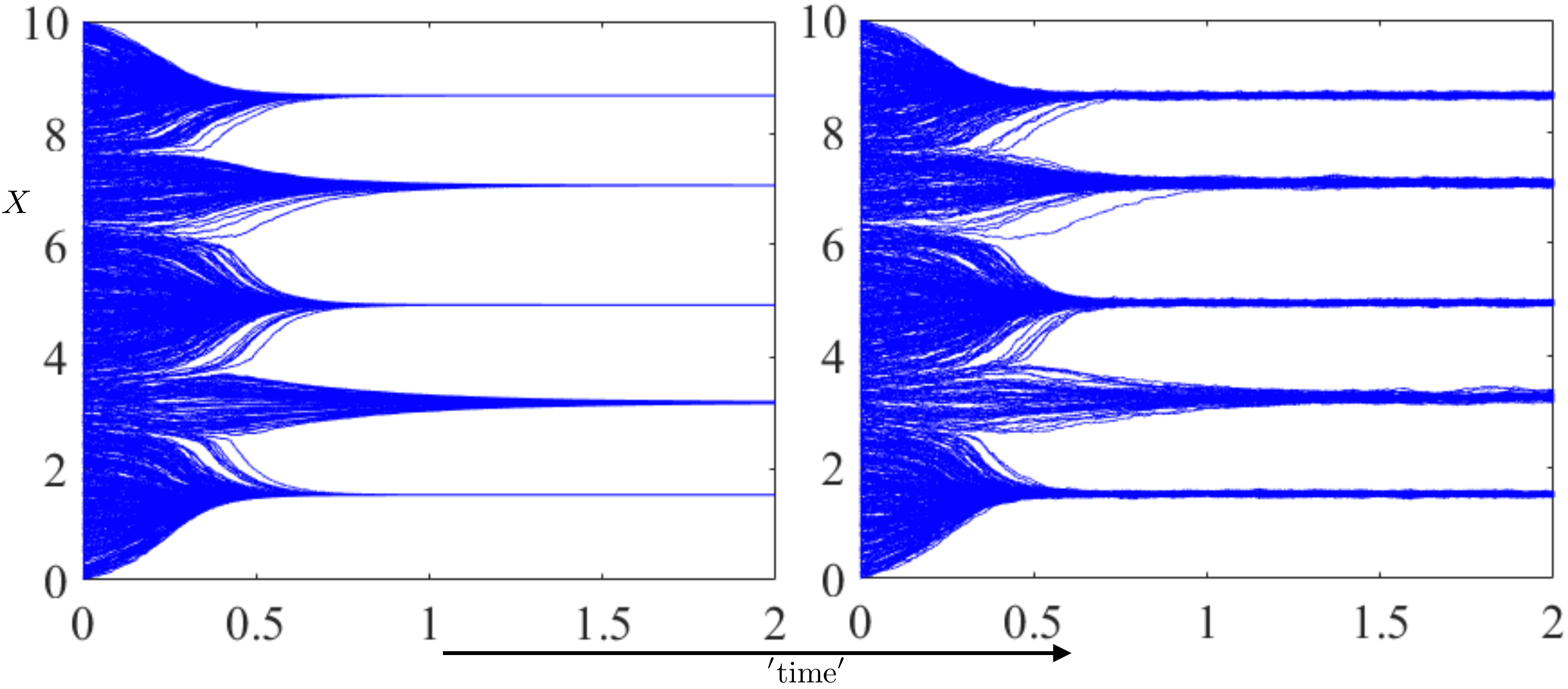}
\end{center}
\caption{Opinion dynamics versus time. The first figure has no Brownian motion. The second is with $\epsilon_N=N^{-1/3}$. The two figures are with the same initial data.}
\label{fig:opinionBM}
\end{figure}

\subsection{Clustering through interacting particle system}\label{sec:cluster}

In this section, we consider using the interacting particle system \eqref{eq:interactingps} for clustering, as discussed in \cite{fang2018emergent}. 
The idea is like this: consider $N$ particles with a given adjacency matrix
$A=(a_{ij})$, $a_{ij}\ge 0$. Then, we construct the interacting particle system as
\begin{gather}
\frac{d}{dt}X^i= \frac{\alpha}{N-1}\sum_{j: j\neq i}(a_{ij}-\beta)(X^j-X^i)
\end{gather}
where $\alpha$ and $\beta$ are some parameters. This is designed such that the particles with positive $a_{ij}-\beta$ attract with each other so that they tend to gather together, while those with negative $a_{ij}-\beta$ repel each other so that they separate. The hoping is that the intrinsic clusters will emerge automatically.

With the RBM-r, the computational cost is significantly reduced and this then becomes a practical method.
Each time, we pick a random set $\mathcal{C}$ that contains $p=2$ elements. The dynamics we consider is then
\begin{gather}\label{eq:dynamicscluster}
\frac{d}{dt}X^i=\alpha (a_{i\theta}-\beta)(X^{\theta}-X^i)
\end{gather}
where $\theta$ is again the random index that is constant on $[t_{m-1}, t_m)$. Clearly, the random batch can be picked from those with nonzeros of $a_{ij}$ only to improve the efficiency.

The update formula is 
\[
\begin{split}
X_m^i=[(X_{m-1}^i+X_{m-1}^j)+(X_{m-1}^i-X_{m-1}^j)\exp(-2\alpha (a_{ij}-\beta)\tau)]/2;\\
X_m^j=[(X_{m-1}^i+X_{m-1}^j)-(X_{m-1}^i-X_{m-1}^j)\exp(-2\alpha  (a_{ij}-\beta)\tau)]/2 .
\end{split}
\]

\begin{figure}
\begin{center}
	\includegraphics[width=0.45\textwidth]{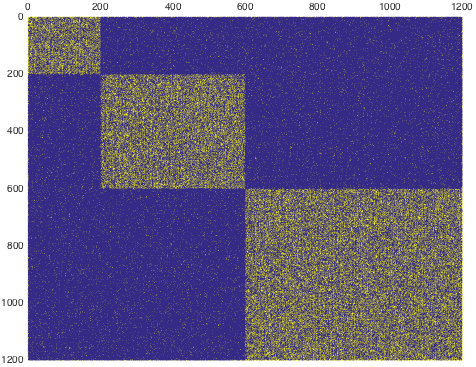}
\end{center}
\caption{Clustering: an adjacent matrix for the stochastic block model}
\label{fig:clustermat}
\end{figure}

\subsubsection{Cluster for stochastic block model}\label{eq:cluster1}

Let us consider the stochastic block model (\cite{holland1983,damle2016}). The model is like this: suppose there are $k$ clusters. For $i, j$ in the same cluster, $\mathbb{P}(a_{ij}=1)=p$ and $\mathbb{P}(a_{ij}=0)=1-p$. Otherwise, $\mathbb{P}(a_{ij}=1)=q$ and $\mathbb{P}(a_{ij}=0)=1-q$. The entries $a_{ij}$ are assumed to be independent. 
We assume that we only know the adjacent matrix in one experiment (if we know the matrices for several experiments, we can then combine them to get more accurate clusters). Clearly, the adjacent matrix is noisy. We are going to test whether or not we can still recover the clusters using the noisy adjacent matrix.

In Fig. \ref{fig:clustermat}, we show the adjacent matrix from one experiment. In this example, we have $N=1200$ particles. The first $200$ particles are designed to be in the first cluster, the next $400$ are in the second cluster and the last $600$ are in the third cluster. The probabilities are chosen as 
\[
p=0.7,~~q=0.3.
\] 
In the matrix, yellow dots mean $1$ while blue dots mean $0$.
As expected,  most of the entries in the off-diagonal blocks are $0$ with some yellow 'dust' scattered inside them.  
Most of the entries in the diagonal blocks are $1$ with blue dots inside them.

\begin{figure}
\begin{center}
	\includegraphics[width=0.9\textwidth]{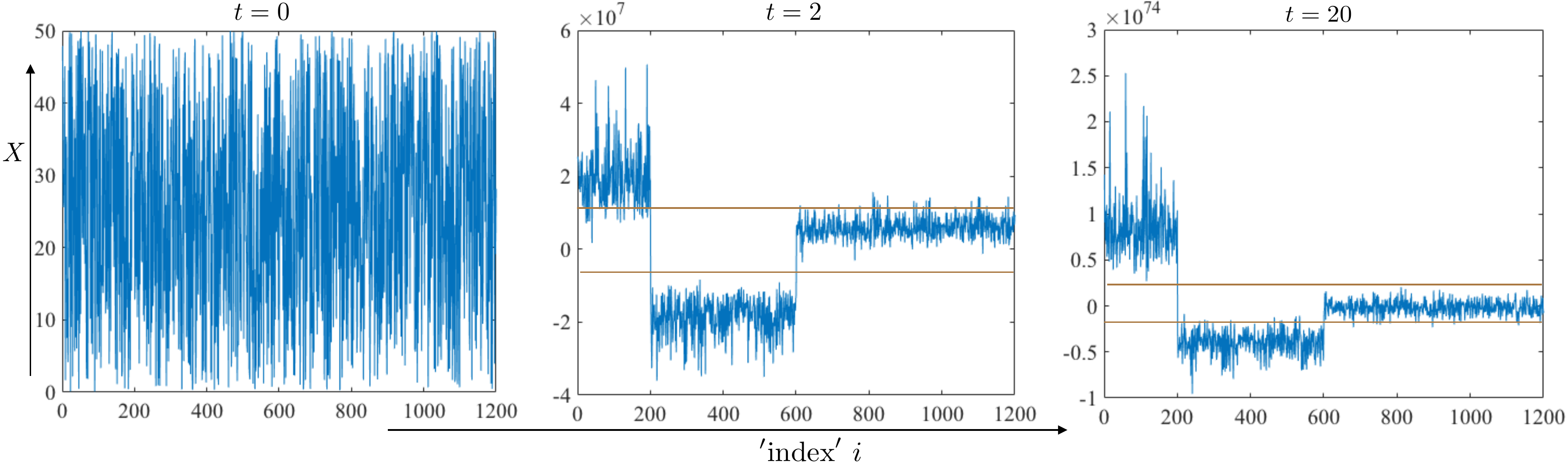}
\end{center}
\caption{Clustering using a noisy adjacent matrix from the stochastic block model. The plots show the positions of the particles at different 'time' points. The desired clusters can be recovered with few mistakes.}
\label{fig:cluster1}
\end{figure}
In the experiment, we set $\beta=\frac{1}{2}$ and $\alpha=40$, $\tau=10^{-3}$.
We initialize their positions randomly on $[0, 50]$. 
The numerical results in an experiment are shown in Fig. \ref{fig:cluster1}. 
 Again the "time" is regarded as $\tau$ after $N/2$ iterations.
 From the figure, it is clear that the clusters can be recovered correctly though the adjacent matrix is noisy. 

\subsubsection{Reordering for sparse matrix}

As another example, let us consider reordering sparse matrices as a byproduct of the clustering.
The point is that large $a_{ij}$ entry tends to group the two indices together. 
If we use the terminal $X_i$'s to sort, the reordered matrix can have large entries near the diagonal. If there are several distinct clusters, we will then have diagonal block matrix.

Consider the matrix given by
\[
A_1=BB^T+I,
\]
where $B$ is the 'west0479' matrix, which is a sparse matrix in the standard database of  MATLAB. Consequently, $A_1$ is a sparse matrix. Since matrix $A_1$ can have negative entries, we define $A=(a_{ij})$ with $a_{ij}=|A_1(i,j)|$ to get a suitable adjacent matrix. Since $A$ is sparse, we do sampling over the nonzero entries only.
\begin{figure}
\begin{center}
	\includegraphics[width=0.9\textwidth]{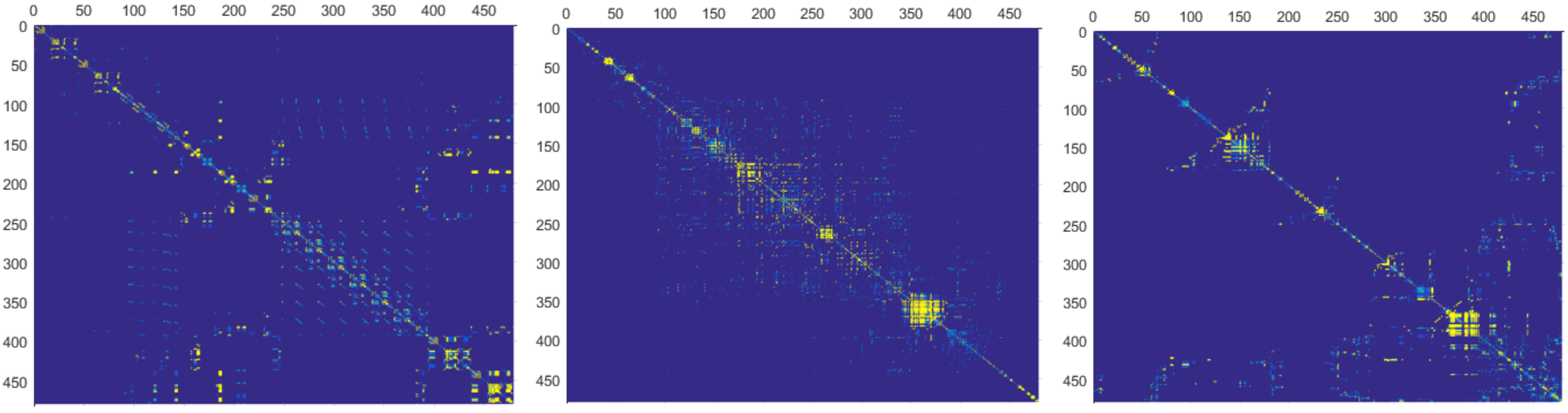}
\end{center}
\caption{A sparse adjacent matrix: clustering and reordering}
\label{fig:cluster3}
\end{figure}

Fig. \ref{fig:cluster3} shows the matrices $\min(P, 10)$ ("$\min$" here means the entry-wise minimum), where the meaning of $P$ is as follows: in the first figure, $P$ is $A$; in the second figure, $P$ is the reordered matrix using our strategy at $T=5$; $P$ in the third figure is the matrix with approximated minimal degree ordering.
Clearly, our strategy gathers all the big entries near the diagonal, which means particles with strong interactions indeed form clusters.

The ordering resulted from our clustering strategy works not that good as the minimal degree ordering using the criterion from sparse matrix theory. For example, in the Cholesky decomposition, there are $37896$ nonzero entries. Meanwhile, by the approximated minimal degree ordering,  there are $14493$ nonzero entries in the Cholesky decomposition.  However, the ordering using clustering gathers big entries near diagonals, which can yield better stability and may be advantageous for some applications.

\section{Conclusions}\label{sec:conclusion}

We have developed Random Batch Methods for interacting particle systems with large number of particles and they reduce the computational cost significantly for $N (N\gg 1)$ particles from $O(N^J)$ to $O(N)$ per time step. For RBM-1, the method without replacement, we have given a particle number independent error estimate under some special interactions. We have applied these methods to some representative problems of binary interactions in math, physics, social science and data science, and numerical results have supported our theory and expectations.
The random algorithms are powerful for systems with large number of individuals and high dimensions. 
 
As well accepted, in stochastic gradient descent, adding momentum could help to find flatter minimizers and improve results. In other words, the Langevin dynamics seems better for optimization and sampling \cite{lelievre2016,cheng2017}. Hence, considering the interacting particles with mass might be better for sampling the invariant measure of the nonlinear Fokker-Planck equation. This is left for future research.
Besides, there are many interesting projections ahead, for example,  proof of convergences for more general external and interacting potentials, and for RBMs with replacements.  It is also interesting to develop similar particle methods for the mean field equations, whenever they are available, as was done in \cite{albi2013} but for more general mean field equations.

\section*{Acknowledgement}
S. Jin was partially supported by the NSFC grant No. 31571071.  The work of L. Li was partially sponsored by NSFC 11901389, Shanghai Sailing Program 19YF1421300 and NSFC 11971314. The work of J.-G. Liu was partially supported by KI-Net NSF RNMS11-07444 and NSF DMS-1812573.

\appendix

\section{An error analysis for a Hamiltonian system}\label{sec:Hamil}

In this section, we give a convergence proof of the RBM-1 for the following second order (Hamiltonian) system
\begin{gather}\label{eq:Hamil1}
\begin{split}
& \dot{X}^i=V^i,\\
& \dot{V}^i= b(X^i)+\frac{1}{N-1}\sum_{j: j\neq i}K(X^i-X^j),
\end{split}
\end{gather}
where the dot means time derivative. Note that the variable '$V$' here is reserved for the velocity so we use $b(\cdot)$ to represent the external force field corresponding to $-\nabla V$ in the main text.

Similarly, the processes generated by RBM-1 are determined by the following ODEs for $t\in [t_{m-1}, t_m)$
\begin{gather}\label{eq:Hamilrand}
\begin{split}
&\dot{\tilde{X}}^i=U^i,\\
& \dot{U}^i= b(\tilde{X}^i)+\frac{1}{p-1}\sum_{j\in\mathcal{C}_{q}: j\neq i}K(\tilde{X}^i-\tilde{X}^j),
\end{split}
\end{gather}
where $\mathcal{C}_q$ is the batch in which $i$ lives.

Assume that the initial data are drawn randomly and independently:
\begin{assumption}\label{ass:second}
Systems \eqref{eq:Hamil1} and \eqref{eq:Hamilrand} share the same initial data $X^i(0)=\tilde{X}^i(0)=X_0^i$ and 
$V^i(0)=U^i(0)=V_0^i$, and the initial data $(X_0^i, V_0^i)$ are i.i.d sampled from some common distribution.
\end{assumption}

With the setup, one can show the convergence of RBM-1 similarly.
\begin{theorem}\label{thm:secondconv}
Let Assumption \ref{ass:second} hold. Suppose $b(\cdot)\in C^1(\mathbb{R}^d)$ with $b$ and $\nabla b$ being bounded, and $K$ is bounded and Lipschitz continuous. Then
the RBM-1 converges on $[0, T]$ in the sense that 
\begin{gather}
\begin{split}
\sup_{0\le t\le T}\Big(\mathbb{E}\frac{1}{N}\sum_{j=1}^N(|X^j(t)-\tilde{X}^j(t)|^2+|V^j(t)-U^j(t)|^2)\Big)^{1/2}  &= \\
\sup_{0\le t\le T}\Big(\mathbb{E}(|X^i(t)-\tilde{X}^i(t)|^2+|V^i(t)-U^i(t)|^2)\Big)^{1/2}  &\le C(T)\sqrt{\frac{\tau}{p-1}},
\end{split}
\end{gather}
for all $i\in \{1,\ldots, N\}$ and some $C(T)>0$ independent of $N$.
\end{theorem}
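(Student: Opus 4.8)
The plan is to mirror the energy estimate behind Theorem \ref{thm:mainresult}, but to replace the contraction argument (which there exploited $r>2L$) by a Grönwall inequality, since the Hamiltonian flow carries no dissipation and only a finite-time bound can be expected. Introduce the error processes $Z^i=\tilde X^i-X^i$ and $W^i=U^i-V^i$. Subtracting \eqref{eq:Hamil1} from \eqref{eq:Hamilrand} gives, for $t\in[t_{m-1},t_m)$, that $\dot Z^i=W^i$ and
\[
\dot W^i=\big(b(\tilde X^i)-b(X^i)\big)+\frac{1}{N-1}\sum_{j\neq i}\big(K(\tilde X^i-\tilde X^j)-K(X^i-X^j)\big)+\chi_{m,i}(\tilde X),
\]
where $\chi_{m,i}$ is exactly the batch fluctuation from Section \ref{sec:alg}, now evaluated along $\tilde X$. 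By Assumption \ref{ass:second} both particle systems are exchangeable, so it suffices to control $u(t):=\mathbb{E}\big(|Z^1(t)|^2+|W^1(t)|^2\big)=\frac{1}{N}\sum_i\mathbb{E}(|Z^i|^2+|W^i|^2)$. A useful simplification over the first-order case is that $b$ and $K$ are bounded, so $|\dot W^i|$ is bounded by a deterministic constant; this yields the pathwise bound $|W^i(t)-W^i(t_{m-1})|\le C\tau$ and makes the moment estimates of Lemma \ref{lmm:momcond} unnecessary.

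First I would differentiate the energy. Since $\dot Z^i=W^i$, the cross term $Z^i\cdot W^i$ is bounded by $\tfrac12(|Z^i|^2+|W^i|^2)$; boundedness of $\nabla b$ controls $W^i\cdot(b(\tilde X^i)-b(X^i))\le\|\nabla b\|_\infty|W^i||Z^i|$; and the Lipschitz bound $|K(\tilde X^i-\tilde X^j)-K(X^i-X^j)|\le L(|Z^i|+|Z^j|)$, with Cauchy--Schwarz and exchangeability, controls the mean-field difference term by $Cu(t)$. Collecting these and writing $R(t):=\mathbb{E}\big(W^1(t)\cdot\chi_{m,1}(\tilde X(t))\big)$ (again using exchangeability to collapse the sum over $i$), I obtain
\[
\frac{d}{dt}u(t)\le C\,u(t)+2R(t),\qquad t\in[t_{m-1},t_m).
\]

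The heart of the proof, and the step I expect to be the main obstacle, is the estimate of $R(t)$. Following the splitting used for $I_1,\dots,I_4$ in Theorem \ref{thm:mainresult}, I would first replace $\chi_{m,1}(\tilde X(t))$ by the reference fluctuation $\chi_{m,1}(X(t))$; since $K$ is Lipschitz, this difference contributes at most $Cu(t)$. The leading term $\mathbb{E}\big(W^1(t)\cdot\chi_{m,1}(X(t))\big)$ is then split at $t_{m-1}$. Here the delicate point is that with $\sigma=0$ the reference trajectory $X(\cdot)$ is a deterministic function of the initial data, hence $\mathcal{G}_{m-1}$-measurable for \emph{all} $t$, while the batch chosen at $t_{m-1}$ is independent of $\mathcal{G}_{m-1}$; conditioning on $\mathcal{G}_{m-1}$ and applying the consistency Lemma \ref{lmm:averagefunc} (with the fixed point $x=X(t)$) gives $\mathbb{E}\big(W^1(t_{m-1})\cdot\chi_{m,1}(X(t))\big)=0$, the exact analogue of the vanishing $I_1$. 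What remains is $\mathbb{E}\big((W^1(t)-W^1(t_{m-1}))\cdot\chi_{m,1}(X(t))\big)$; substituting $\dot W^1$ and isolating its $\chi_{m,1}(\tilde X(s))$ contribution produces the variance integral $\mathbb{E}\int_{t_{m-1}}^t\chi_{m,1}(X(s))\cdot\chi_{m,1}(X(t))\,ds$, which by Lemma \ref{lmm:averagefunc} and the boundedness of $\Lambda_1$ (a consequence of $K$ being bounded) is at most $\tfrac{\tau}{p-1}\|\Lambda_1\|_\infty$.

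The remaining pieces, handled via Lemma \ref{lmm:normofrandomsum} and Young's inequality, are of the form $C\sqrt{u(t)}\,\tfrac{\tau}{\sqrt{p-1}}+Cu(t)\tau$ and are absorbed, yielding $R(t)\le Cu(t)+C\tfrac{\tau}{p-1}$. Feeding this into the differential inequality gives $\tfrac{d}{dt}u\le Cu+C\tfrac{\tau}{p-1}$, and since $u(0)=0$ by the shared initial data, Grönwall's inequality yields $u(t)\le C(T)\tfrac{\tau}{p-1}$ on $[0,T]$; taking square roots gives the claim. The essential difficulty is organizing this $R(t)$ decomposition so that every term either vanishes by conditional consistency or carries the $\tau/(p-1)$ (or absorbable) scaling: in contrast with the first-order proof, the absence of contraction means the $Cu(t)$ terms cannot be killed and must instead be tolerated through Grönwall on a finite interval, which is precisely why the bound degrades to a $T$-dependent constant.
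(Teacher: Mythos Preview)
Your proof is correct and follows the same overall strategy as the paper: differentiate the energy $u(t)=\mathbb{E}(|Z^1|^2+|W^1|^2)$, bound the non-fluctuation terms by $Cu(t)$ via exchangeability and Lipschitz continuity, isolate the fluctuation term $R(t)=\mathbb{E}\,W^1(t)\cdot\chi_{m,1}(\tilde X(t))$, and close with Gr\"onwall on $[0,T]$.

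The one genuine difference is in how $R(t)$ is decomposed. The paper reuses the four-term splitting $I_1+I_2+I_3+I_4$ from Theorem~\ref{thm:mainresult}, anchoring the vanishing piece at $\chi_{m,1}(\tilde X(t_{m-1}))$. You instead first replace $\chi_{m,1}(\tilde X(t))$ by $\chi_{m,1}(X(t))$ (at cost $Cu(t)$ plus lower order) and then anchor at $X(t)$, exploiting that in the absence of Brownian motion the reference trajectory $X(\cdot)$ is $\mathcal{G}_{m-1}$-measurable at \emph{every} time, not only at $t_{m-1}$; this collapses what would be $I_1$ and $I_2$ into a single vanishing conditional expectation and yields a three-piece argument. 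This is precisely the alternative splitting the paper alludes to in the remark following Theorem~\ref{thm:mainresult} (``if we use $\chi_{m,i}(X(t))$ to split the terms\ldots''). In the first-order setting that alternative costs a worse constant ($r>6L$ instead of $r>2L$), but here, where one is using Gr\"onwall anyway, the extra $Cu(t)$ term is harmless and your route is slightly shorter. The paper's splitting, on the other hand, is the one that would survive if diffusion were added back, since then $X(t)$ for $t>t_{m-1}$ is no longer $\mathcal{G}_{m-1}$-measurable. Both routes produce the same differential inequality $\dot u\le Cu+C\tau/(p-1)$ and hence the same conclusion.
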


To prove this, one similarly introduces the error processes:
\begin{gather}
\begin{split}
& Z^i=\tilde{X}^i-X^i,\\
& W^i=U^i-V^i
\end{split}
\end{gather}

Clearly, the error processes satisfy for $t\in [t_{m-1}, t_m)$ that
\begin{gather}
\begin{split}
& \dot{Z}^i =W^i,\\
& \dot{W}^i = \Big( b(\tilde{X}^i)-b(X^i) \Big)
+\frac{1}{N-1}\sum_{j: j\neq i}\delta K_{ij}(t)
+\chi_{m,i}(\tilde{X}),
\end{split}
\end{gather}
where again
\begin{gather}
\delta K_{ij}=K(\tilde{X}^i-\tilde{X}^j)-K(X^i-X^j),
\end{gather}
and the error of the interaction force is given by
\begin{gather}
\chi_{m,i}(x):=\frac{1}{p-1}\sum_{j\in\mathcal{C}_{\theta},j\neq i}K(x^i-x^j)
-\frac{1}{N-1}\sum_{j: j\neq i} K(x^i-x^j).
\end{gather}
The notation $\mathcal{C}_{\theta}$ is used as we want to emphasize that it is a random set over all experiments instead of the realization in one experiment (for which we use $\mathcal{C}_q$).

The following lemmas are some preparation for the proof of the convergence, which are analogies of the lemmas in Section \ref{sec:error} but are much easier due to the absence of Brownian motions.
\begin{lemma}
For any $q>1$, the $q$-moments are bounded almost surely. More precisely, there exists $C_q(T)>0$ such that it holds almost surely that
\begin{gather}
\sup_{0\le t\le T}(|\tilde{X}^i(t)|^q+|U^i(t)|^q) \le C_q(T),
\end{gather}
and 
\begin{gather}
\sup_{0\le t\le T}(|X^i(t)|^q+|V^i(t)|^q)\le C_q(T).
\end{gather}
\end{lemma}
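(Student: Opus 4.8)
The plan is to exploit the one structural feature that separates this second-order system from the first-order analysis of Section~\ref{sec:error}: here both the external field $b$ and the interaction kernel $K$ are assumed \emph{bounded}, whereas in Lemma~\ref{lmm:momcond} $\nabla V$ only had polynomial growth. Consequently the acceleration is controlled by a deterministic constant that does not feed back on the state, so neither a Grönwall argument nor the strong convexity used in Section~\ref{sec:error} is needed; direct integration of the ODEs suffices.

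First I would observe that the right-hand side of every velocity equation is uniformly bounded. Since $\frac{1}{p-1}\sum_{j\in\mathcal{C}_q,\,j\neq i}|K(\tilde{X}^i-\tilde{X}^j)|\le \|K\|_{\infty}$ and likewise $\frac{1}{N-1}\sum_{j:\,j\neq i}|K(X^i-X^j)|\le\|K\|_{\infty}$, setting $M:=\|b\|_{\infty}+\|K\|_{\infty}$ gives, pathwise and for every realization of the random batches,
\begin{gather}
|\dot{U}^i(t)|\le M, \qquad |\dot{V}^i(t)|\le M,
\end{gather}
uniformly in $N$, $p$ and $i$. Integrating in time yields the linear-in-$t$ velocity bound $|U^i(t)|\le |V_0^i|+Mt$, and feeding this into $\dot{\tilde{X}}^i=U^i$ gives the quadratic-in-$t$ position bound
\begin{gather}
|\tilde{X}^i(t)|\le |X_0^i|+\int_0^t|U^i(s)|\,ds\le |X_0^i|+T|V_0^i|+\tfrac12 MT^2,
\end{gather}
for all $t\in[0,T]$, with the identical estimates holding for $(V^i,X^i)$ from \eqref{eq:Hamil1}. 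Raising to the $q$-th power and using $(a+b+c)^q\le 3^{q-1}(a^q+b^q+c^q)$ then produces the desired sup-in-time bounds, with a constant of the form $C_q(T)=C_q'(T)\,(1+|X_0^i|^q+|V_0^i|^q)$.

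The only point requiring care is the precise meaning of the estimate: it is genuinely pathwise, and its constant depends on the random initial datum $(X_0^i,V_0^i)$, which is almost surely finite by Assumption~\ref{ass:second}. Hence the stated inequalities hold almost surely, with $C_q(T)$ absorbing the a.s.-finite initial contribution; if instead one wants a single deterministic $C_q(T)$, one assumes the common initial law has finite $q$-th moment (or bounded support), so that $\mathbb{E}(|X_0^i|^q+|V_0^i|^q)$ is finite and, by the i.i.d.\ hypothesis, independent of $N$. There is no serious obstacle here---the boundedness of the forces removes exactly the state feedback that would otherwise force a Grönwall estimate---so the main thing to get right is the bookkeeping of the $T$ and $T^2$ time factors and the verification that every constant is independent of $N$ and $p$, which is immediate since $\|K\|_{\infty}$ does not scale with the number of particles or the batch size.
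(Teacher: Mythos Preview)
Your argument is correct and actually more elementary than the paper's. The paper differentiates the combined quantity $|\tilde{X}^i|^q+|U^i|^q$, bounds the derivative by $C_1(|\tilde{X}^i|^q+|U^i|^q)+C_2$ using boundedness of $b$ and $K$, and then applies Gr\"onwall on each interval $[t_{m-1},t_m]$ to get the uniform bound on $[0,T]$. You instead exploit the boundedness of the acceleration directly: integrate $|\dot{U}^i|\le M$ once to bound $|U^i|$, integrate again to bound $|\tilde{X}^i|$, and only at the end raise to the $q$-th power. This bypasses the differential inequality entirely and makes the $T$, $T^2$ dependence of the constants explicit, at the minor cost of not packaging the two state variables into a single Lyapunov-type function.

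Your discussion of whether $C_q(T)$ is deterministic or depends on the a.s.-finite initial datum is apt; the paper's Gr\"onwall argument has exactly the same dependence on $|X_0^i|^q+|V_0^i|^q$, so neither approach yields a deterministic constant without an additional hypothesis (bounded support or finite $q$-moments of the initial law). For how the lemma is used downstream---controlling $L^2(\Omega)$-norms in the proof of Theorem~\ref{thm:secondconv}---the moment interpretation is what is actually needed, and you identify this correctly.
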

The proof is very straightforward for which we omit the details. For example, regarding system \eqref{eq:Hamilrand}, it holds
almost surely that for $t\in [t_{m-1}, t_m]$:
\begin{gather*}
\begin{split}
\frac{d}{dt}(|\tilde{X}^i|^q+|U^i|^q) &=q|\tilde{X}^i|^{q-2}\tilde{X}^i\cdot U^i+ q|U^i|^{q-2} U^i\cdot\left(b(\tilde{X}^i)+\frac{1}{p-1}\sum_{j\in\mathcal{C}_{\theta}: j\neq i}K(\tilde{X}^i-\tilde{X}^j)\right)\\
&\le C|\tilde{X}^i|^{q-1}|U^i|+C|U^i|^{q-1}\\
&\le C_1(|\tilde{X}^i|^q+|U^i|^q)+C_2.
\end{split}
\end{gather*}
This will give the uniform bound of $|\tilde{X}^i|^q+|U^i|^q$ on $[0, T]$.

With the moments bound, it is easy to conclude the following for which we omit the proof.
\begin{lemma}\label{lmm:uniformincrementsecond}
For all $t\in [t_{m-1}, t_m)$,
\begin{gather}
\|\tilde{X}^i(t)-\tilde{X}^i(t_{m-1})\|+\|U^i(t)-U^i(t_{m-1})\|\le C(T)\tau,
\end{gather}
where $\|\cdot\|$ again means the $L^2(\Omega)$ norm.
Also, almost surely, it holds that
\begin{gather}
|U^i(t)-U^i(t_{m-1})|+|V^i(t)-V^i(t_{m-1})|\le C(T)\tau.
\end{gather}
\end{lemma}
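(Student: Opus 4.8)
The plan is to exploit the fact that, once the uniform moment bounds of the preceding (unlabeled) moment lemma are in hand, every right-hand side appearing in \eqref{eq:Hamil1} and \eqref{eq:Hamilrand} is uniformly bounded almost surely; consequently each increment over an interval of length at most $\tau$ is controlled simply by integrating a bounded integrand. Unlike the first-order SDE analysis of Section~\ref{sec:error}, there is no Brownian motion here, so the integrands are honest bounded functions of time and no It\^o isometry or conditional-expectation splitting is needed.

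First I would handle the velocity increments, which give the almost sure bound directly. Writing the velocity equation of \eqref{eq:Hamilrand} in integral form,
$$U^i(t)-U^i(t_{m-1})=\int_{t_{m-1}}^t\left(b(\tilde{X}^i(s))+\frac{1}{p-1}\sum_{j\in\mathcal{C}_q,j\neq i}K(\tilde{X}^i(s)-\tilde{X}^j(s))\right)ds,$$
and using that $b$ is bounded by hypothesis while the interaction average has $p-1$ summands each bounded by $\|K\|_\infty$ and is divided by $p-1$, the integrand is bounded almost surely by $\|b\|_\infty+\|K\|_\infty$. Hence $|U^i(t)-U^i(t_{m-1})|\le(\|b\|_\infty+\|K\|_\infty)\tau$ almost surely. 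The identical estimate applies to $V^i(t)-V^i(t_{m-1})$ via the velocity equation of \eqref{eq:Hamil1}, where the full average over $j\neq i$ is again bounded by $\|K\|_\infty$. This establishes the second (almost sure) assertion of the lemma, in fact with a $T$-independent constant.

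Next I would treat the position increment. From the first line of \eqref{eq:Hamilrand}, $\tilde{X}^i(t)-\tilde{X}^i(t_{m-1})=\int_{t_{m-1}}^t U^i(s)\,ds$, so that $|\tilde{X}^i(t)-\tilde{X}^i(t_{m-1})|\le\int_{t_{m-1}}^t|U^i(s)|\,ds$. Here I invoke the preceding moment lemma (with any $q>1$, say $q=2$), which gives $\sup_{0\le s\le T}|U^i(s)|\le C(T)$ almost surely, whence $|\tilde{X}^i(t)-\tilde{X}^i(t_{m-1})|\le C(T)\tau$ almost surely. Since every pointwise bound obtained above holds with a deterministic constant, passing to the $L^2(\Omega)$ norm is immediate, so both $\|\tilde{X}^i(t)-\tilde{X}^i(t_{m-1})\|$ and $\|U^i(t)-U^i(t_{m-1})\|$ are bounded by $C(T)\tau$; adding these yields the first displayed inequality.

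There is no genuine obstacle in this argument. The only input beyond the boundedness of $b$ and $K$ is the uniform-in-time moment control of $U^i$ from the previous lemma, needed solely to bound the velocity integrand in the position increment. The single point requiring a little care is bookkeeping of the constant: the velocity increments carry a $T$-independent bound, whereas the position increment inherits the time dependence $C(T)$ of the velocity moment bound, so the stated $C(T)\tau$ is the correct uniform form covering both.
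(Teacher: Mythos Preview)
Your proposal is correct and matches the paper's intended approach exactly: the paper omits the proof entirely, stating only that ``with the moments bound, it is easy to conclude the following,'' and your argument---integrating the bounded right-hand sides of \eqref{eq:Hamil1} and \eqref{eq:Hamilrand} and invoking the almost sure moment bound on $U^i$ for the position increment---is precisely that easy conclusion.
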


\begin{proof}[Proof of Theorem \ref{thm:secondconv}]
Define again
\begin{gather}
u(t)=\mathbb{E}\frac{1}{N}\sum_{j=1}^N (|Z^j|^2+|W^j|^2).
\end{gather}
Due to exchangeability, 
\begin{gather}
u(t)=\mathbb{E} (|Z^i|^2+|W^i|^2),~\forall i\in \{1, \ldots, N\}.
\end{gather}

For $t\in [t_{m-1}, t_m)$, taking time derivative, one has
\begin{multline*}
\frac{d}{dt}u=2\mathbb{E}\left(Z^i\cdot W^i+W^i\cdot(b(\tilde{X}^i)-b(X^i))
+W^i\cdot \frac{1}{N-1}\sum_{j: j\neq i}\delta K_{ij}\right)\\
+2\mathbb{E} W^i(t)\cdot\chi_{m,i}(Y(t)).
\end{multline*}
Since $K$ is Lipschitz continuous, 
\[
\begin{split}
& \mathbb{E}\left(Z^i\cdot W^i++W^i\cdot(b(\tilde{X}^i)-b(X^i))
+W^i\cdot \frac{1}{N-1}\sum_{j: j\neq i}\delta K_{ij} \right)\\
   & \le C\mathbb{E}(|Z^i| |W^i|)
+C\frac{1}{N-1}\sum_{j: j\neq i}\mathbb{E}|W^i||Z^j| \\
& \le Cu(t).
\end{split}
\]
The second inequality above is due to exchangeability.

Now, one can do the same trick as in Section \ref{sec:error}:
\begin{multline}
\mathbb{E} W^i(t)\cdot\chi_{m,i}(\tilde{X}(t)) =\mathbb{E}W^i(t_{m-1})\cdot\chi_{m,i}(\tilde{X}(t_{m-1}))
+\mathbb{E}W^i(t_{m-1})\cdot (\chi_{m,i}(\tilde{X}(t))-\chi_{m,i}(\tilde{X}(t_{m-1})))\\
+\mathbb{E}(W^i(t)-W^i(t_{m-1}))\cdot \chi_{m,i}(X(t))\\
+\mathbb{E}(W^i(t)-W^i(t_{m-1}))\cdot (\chi_{m,i}(\tilde{X}(t))-\chi_{m,i}(X(t)))
=: I_1+I_2+I_3+I_4.
\end{multline}

Similarly, $I_1=0$, due to Lemma \ref{lmm:averagefunc}. Also,
\[
I_2\le \|W^i(t_{m-1})\| \left\|\frac{1}{p-1}\sum_{j\in \mathcal{C}_{\theta}, j\neq i} |\delta K^{ij}|
+\frac{1}{N-1}\sum_{j: j\neq i} |\delta K^{ij}| \right\|,
\]
where
\[
\delta K^{ij}:=K(\tilde{X}^i(t)-\tilde{X}^j(t))-K(\tilde{X}^i(t_{m-1})-\tilde{X}^j(t_{m-1})).
\]
Clearly, due to the Lipschitz continuity of $K$ and the equation of $\tilde{X}^i$, it holds
\[
|\delta K^{ij}|\le C\left(\int_{t_{m-1}}^t |U^i(s)|\,ds
+\int_{t_{m-1}}^t |U^j(s)|\,ds \right)
\le C(|U^i(t_{m-1})|\tau+|U^j(t_{m-1})|\tau+\tau^2 ),
\]
where the almost sure bound of $U^i(t)-U^i(t_{m-1})$ in Lemma \ref{lmm:uniformincrementsecond} has been used.
Applying Lemma \ref{lmm:normofrandomsum} regarding the norm of random sum, one therefore obtains
\[
I_2\le C\|W^i(t_{m-1})\|\tau\le C(\|W^i(t)\|\tau+\tau^2).
\]

Similar as in Section \ref{sec:error}, one has
\[
I_3\le C\sqrt{u}\tau+\|\Lambda_i\|_{\infty}\left(\frac{1}{p-1}-\frac{1}{N-1}\right)\tau.
\]

Similarly as $I_2$, one can control $I_4$ by
\[
I_4\le \|W^i(t)-W^i(t_{m-1})\| \left\|\frac{1}{p-1}\sum_{j\in \mathcal{C}_{\theta}, j\neq i} |\delta K_{ij}|
+\frac{1}{N-1}\sum_{j: j\neq i} |\delta K_{ij}| \right\|,
\]
where
\[
\delta K_{ij}(t)=K(\tilde{X}^i(t)-\tilde{X}^j(t))-K(X^i(t)-X^j(t)).
\]
Using the Lipschitz continuity of $K$ again and the equation of $X^i, \tilde{X}^i$, one has
\begin{multline*}
|\delta K_{ij}|\le  |\tilde{X}^i-X^i|(t_{m-1})+|U^i(t_{m-1})-V^i(t_{m-1})|\tau\\
+|\tilde{X}^j-X^j|(t_{m-1})+|U^j(t_{m-1})-V^j(t_{m-1})|\tau+C\tau^2. 
\end{multline*}
Since $\|W^i(t)-W^i(t_{m-1})\|\le C\tau$ and applying again Lemma \ref{lmm:normofrandomsum}, one has
\[
I_4\le C\sqrt{u}\tau+C\tau^2.
\]
The claim then follows by Gr\"onwall inequality as in Section \ref{sec:error}.
\end{proof}

\bibliographystyle{unsrt}
\bibliography{sdealg}

\end{document}